\newtheorem{thm}{Theorem}[section]
\newtheorem{prop}[thm]{Proposition}
\newtheorem{cor}[thm]{Corollary}
\newtheorem{lem}[thm]{Lemma}
\theoremstyle{definition}
\newtheorem{dfn}[thm]{Definition}
\newtheorem{thmdfn}{Theorem-Definition}[section]
\newtheorem{rmk}[thm]{Remark}
\numberwithin{equation}{section}
\newcommand{\spa}{\textrm{span}}
\newcommand{\cH}{\mathcal{H}}
\newcommand{\bG}{\mathbb{G}}
\newcommand{\MCB}{M_0A}
\newcommand{\SUqone}{SU_q(1,1)_{{\rm ext}}}
\newcommand{\ep}{\epsilon}
\newcommand{\sgn}{{\rm sgn}}
\newcommand{\SUone}{SU_q(1,1)_{{\rm ext}}}
\newcommand{\CB}{\mathcal{CB}}
\newcommand{\cG}{\mathcal{G}}
\newcommand{\cI}{\mathcal{I}}
\newcommand{\cT}{\mathcal{T}}
\newcommand{\id}{\textrm{id}}
\title[Weak amenability of locally compact quantum groups]{Weak amenability of locally compact quantum groups and approximation properties of extended quantum $SU(1,1)$}
\date{\noindent \today.  \\
The  author was supported by the ANR project: ANR-2011-BS01-008-01. 
 }  
\author{Martijn Caspers}
 \address{M. Caspers, Laboratoire de Math\'ematiques, Universit\'e de Franche-Comt\'e, 16 Route de Gray, 25030 Besan\c con, France}
 \email{martijn.caspers@univ-fcomte.fr}
\begin{document}
 
\maketitle

\begin{abstract}
We study weak amenability for locally compact quantum groups in the sense of Kustermans and Vaes. In particular, we focus on non-discrete examples. We prove that a coamenable quantum group is weakly amenable if there exists a net of positive, scaling invariant elements in the Fourier algebra $A(\bG)$ whose representing multipliers form an  approximate identity in $C_0(\bG)$ that is bounded in the $\MCB(\bG)$ norm; the bound being an upper estimate for the associated Cowling-Haagerup constant.  

As an application, we find the appropriate approximation properties of the extended quantum $SU(1,1)$ group and its dual. That is, we prove that it is weakly amenable and coamenable. Furthermore, it has the Haagerup property in the quantum group sense,  introduced  by Daws, Fima, Skalski and White. 
\end{abstract}

\section{Introduction}
 
Locally compact quantum groups have been introduced by Kustermans and Vaes in their papers \cite{KusVae}, \cite{KusVaeII}. They form a category larger than locally compact groups, admitting a full Pontrjagin duality theorem.

A class of examples occurs as deformations of the algebra of continuous or measurable functions on a locally compact group, for example the deformations of simple compact Lie groups. Furthermore, suitable deformations of $E(2)$ and $SU(1,1)$ have been constructed. 

These examples give rise to the question which approximation properties these quantum groups and their duals have. For the deformation of $E(2)$ (see \cite{Wor}, \cite{Jacobs}) it is known that it is both {\it amenable} and {\it coamenable}. Here, amenability is the (or at least one of the) quantum generalization(s) of group amenability, whereas coamenability is the dual notion. Deformations of  compact simple Lie groups are coamenable (c.f. \cite[Corollary 6.2]{Banica}) and trivially amenable.

Recall that {\it coamenability} asserts the existence of a bounded approximate identity in the convolution algebra $L^1(\bG)$ of a locally compact quantum group $\bG$. {\it Amenability} can for example be characterized by the fact that the universal and reduced dual quantum groups $C^\ast_u(\bG)$ and $C^\ast_r(\bG)$ are equal. 

Amenability and coamenability of a quantum group are dual to each other, in the sense that for a compact quantum group it is known that it is coamenable if and only if the Pontrjagin dual quantum group is amenable, see the work of Ruan \cite{RuanAm} and Tomatsu \cite{Tomatsu}. The question whether or not this generalizes to arbitrary quantum groups remains open. 

For groups subsequently weaker approximation properties than amenability have been introduced, in particular {\it weak amenability} \cite{CanHaa}. These notions have been generalized to quantum groups of Kac type by Kraus and Ruan \cite{Ruan}. In particular, equivalent intrinsic approximation properties of the C$^\ast$-algebra and von Neumann algebra of a discrete Kac algebra have been found. For example weak amenability of $\mathbb{G}$ corresponds to the completely bounded approximation property (CBAP) of $C^\ast_r(\mathbb{G})$.

Let us mention that in  \cite{Freslon} Freslon proved that the free orthogonal and free unitary quantum groups are weakly amenable. The result was extended to the non-Kac deformations in \cite{ComFreYam} in terms of the CBAP. The current paper deals with a weak amenability result for non-discrete quantum groups, which is of different flavor, basically since it does not have an interpretation in terms of the CBAP. 

Various equivalent notions of the Haagerup property for locally compact quantum groups were recently investigated by Daws, Fima, Skalski and White \cite{DawFimSkaWhi}. Examples so far come from Brannan's result \cite{Bran} on free orthogonal and free unitary quantum groups  and Lemeux \cite{Lem} for quantum reflexion groups. Beyond the discrete case, the only known (non-classical) examples  are amenable or follow from coamenability of the dual. 

\vspace{0.3cm}

The $\SUone$ group was first established as a proper von Neumann algebraic quantum group by Koelink and Kustermans \cite{KoeKus}. One of the main difficulties was to prove the coassociativity of the comultiplication. More recently a novel way of obtaining $\SUone$ was found by De Commer \cite{ComII} (see also \cite{ComI}),   avoiding the proof of the coassociativity. In \cite{GroKoeKus} Groenevelt, Koelink and Kustermans obtain  the Plancherel decomposition of the multiplicative unitary of $\SUone$. Part of this paper is  based on this achievement.  We expand below in more detail.

Let us mention that the construction of non-compact operator algebraic quantum groups is one of the major open questions.  However, there is reasonable hope that the techniques of \cite{ComI} allow a passage to a larger class of examples. In fact, in \cite{ComIII} it was shown that also a version of  $E(2)$ can be recovered. 

\vspace{0.3cm}

We summarize the main results of this paper. Section \ref{Sect=CompactaApproximation} provides a sufficient condition for a quantum group to be weakly amenable. Explaining the notation and definitions in the subsequent sections, we state:

\vspace{0.3cm}

{\bf Theorem 1}. Let $\bG$ be a coamenable locally compact quantum group. Suppose that there exists a net $\{ a_i \}$ of positive elements in the Fourier algebra $A(\bG)$, whose representing elements in $C_0(\bG)$ are invariant under the scaling group and such that they form an approximate identity for $C_0(\bG)$ with   $\Vert a_i \Vert_{\MCB(\bG)}$  bounded. Then, $\bG$ is weakly amenable.

\vspace{0.3cm}

The theorem is based on the arguments of De Canniere and Haagerup \cite{CanHaa} and is typically applicable if the trivial corepresentation lies nicely in the representation spectrum of a quantum group. Compared to \cite{CanHaa}, besides technical difficulties,  we encounter two new phenomena in the quantum case.  It appears that  for quantum groups   {\it coamenability} plays an important role. Also, it turns out that we need to work with {\it modular  } multipliers (we state the definition later). Recall that classical groups are always coamenable and the modular assumption is trivially satisfied. 

\vspace{0.3cm}

As an example, we find the appropriate approximation properties of $\SUone$ and its dual. Our proofs rely on $q$-analysis of little $q$-Jacobi functions that are special cases of $_2 \varphi _1$-hypergeometric series. As a first result, we find:

\vspace{0.3cm}

\noindent {\bf Theorem A.} $\SUone$ is coamenable.

\vspace{0.3cm}

Let us mention that coamenability was recently used in relation with idempotent states on locally compact quantum groups \cite{PekSal}. It turns out that $\SUone$ falls within the category of examples, answering a question in \cite[Section 3]{PekSal}.

As a consequence of {\bf Theorem A}, we are able to prove the following.

\vspace{0.3cm}

\noindent {\bf Theorem B.} $\SUone$ is weakly amenable with Cowling-Haagerup constant 1. 

\vspace{0.3cm}

 The proof of {\bf Theorem B} relies on computations based on the Plancherel decomposition \cite{GroKoeKus} and then follows the proof of Haagerup and De Canniere \cite{CanHaa} in order to apply {\bf Theorem 1}. As in the classical case (see also \cite[Remark 3.8]{CanHaa}), we believe that our proofs are adaptable to deformations of Lie groups of which the identity operator lies nicely in the spectrum of corepresentations.  Finally, we find that:

\vspace{0.3cm}

\noindent {\bf Theorem C.} $\SUone$ has the Haagerup property.

\vspace{0.3cm}

These are the first (genuine) examples of non-discrete, non-amenable quantum groups that: are weakly amenable/have the Haagerup property. Also the dual of $\SUone$ has the Haagerup property, as will follow from {\bf Theorem A}. 

\vspace{0.3cm}

The structure of this paper is as follows. Section \ref{Sect=Lcqg} recalls the definition of a locally compact quantum group. In Section \ref{Sect=CompactaApproximation} we recall the definition of amenability and prove   {\bf Theorem 1}. In Section \ref{Sect=SUone} we recall the necessary preliminaries on $\SUqone$. Section \ref{Sect=Coamenability} proves {\bf Theorem A}. Sections \ref{Sect=Spherical} and  \ref{Sect=WeakAmenability} prove {\bf Theorem B}. Section \ref{Sect=Haagerup} proves {\bf Theorem  C}. In the Appendix, we prove certain density properties and we prove convergence properties of basic hypergeometric series. 

\vspace{0.3cm}

\subsection*{General notation}  $\mathbb{N}$ denotes the natural numbers excluding 0. For weight theory we refer to \cite{TakII}. If $\varphi$ is a normal, semi-finite, faithful weight on a von Neumann algebra $L^\infty(\bG)$, then we denote $J, \nabla, \sigma$ for the modular conjugation, modular operator and modular automorphism group. We use the formal notation $L^2(\bG)$ for the GNS-space and $L^2(\bG) \cap L^\infty(\bG)$ for the elements $x \in L^\infty(\bG)$ for which $\varphi(x^\ast x) < \infty$.  We use $L^1(\bG)$ for the predual of $L^\infty(\bG)$ and $L^1(\bG)^+$ for its positive part. We use the Tomita algebra,   
\[ 
\begin{split}
 \cT_\varphi = \{ x \in L^\infty(\bG) \mid &\: x \textrm{ is analytic for } \sigma \textrm{ and } \\
&\: \sigma_z(x) \textrm{ and } \sigma_z(x)^\ast \textrm{ are in }  L^2(\bG) \cap L^\infty(\bG)   \}.
\end{split}
\]
We freely use Tomita-Takesaki theory, but let us at least recall the following standard facts \cite{TakII}. Let $\varphi$ be a normal, semi-finite, faithful weight with GNS-representation $(L^2(\bG), \pi, \Lambda)$. The domain of $\Lambda$ equals $L^2(\bG) \cap L^\infty(\bG)$ and $\Lambda$ is $\sigma$-weakly/weak (or equivalently $\sigma$-strong-$\ast$/norm) closed. $\cT_\varphi$ is a $\sigma$-weak/norm core for $\Lambda$.  

\vspace{0.3cm}

For $c \in L^\infty(\bG)$ and $\omega \in L^1(\bG)$ we set $(c \cdot \omega)(x) = \omega(xc), x\in L^\infty(\bG)$. Also, for $a,b \in L^2(\bG) \cap L^\infty(\bG)$, we set $(a\varphi b^\ast)(x) = \varphi(b^\ast xa), x \in L^\infty(\bG)$. For $\xi, \eta \in L^2(\bG)$ we set $\omega_{\xi, \eta}(x) = \langle x \xi, \eta \rangle, x \in L^\infty(\bG)$ and $\omega_\xi = \omega_{\xi, \xi}$.

\vspace{0.3cm}

Tensor products are always von Neumann algebraic, unless clearly stated otherwise. 

\section{Locally compact quantum groups}\label{Sect=Lcqg}

For the Kustermans-Vaes definition of a locally compact quantum group, see \cite{KusVae} and \cite{KusVaeII}.   We would recommend \cite{DaeleLcqg} as an introduction, giving an almost self-contained approach to the theory. For a broader introduction we refer to \cite{Tim}.

\vspace{0.3cm}

Throughout the paper $\mathbb{G}$ is a locally compact quantum group. It consists of a von Neumann algebra $L^\infty(\bG)$ and comultiplication $\Delta: L^\infty(\bG) \rightarrow L^\infty(\bG)  \otimes L^\infty(\bG)$ which is implemented by the left multiplicative unitary $W \in B(L^2(\bG) \otimes L^2(\bG))$, 
\[
\Delta(x) = W^\ast (1 \otimes x) W, \qquad x \in L^\infty(\bG). 
\]
Moreover, there exist normal, semi-finite, faithful Haar weights $\varphi$ and $\psi$ on $L^\infty(\bG)$ that satisfy the left and right invariance axioms,
\[
  (\id\otimes \varphi )\Delta(x) = \varphi(x) \: 1_{L^\infty(\bG)}, \qquad  
   (\psi \otimes \id )\Delta(x) = \psi(x) \: 1_{L^\infty(\bG)}, \qquad x \in L^\infty(\bG)^+. 
\]  
We have a GNS-construction $(L^2(\bG), \pi, \Lambda)$  with respect to the left Haar weight. We omit $\pi$ in the notation. The left and right Haar weight are related by the modular element $\delta$,  affiliated with $L^\infty(\bG)$, by the formal identification $\psi(\: \cdot \:) = \varphi(\delta^{\frac{1}{2}} \: \cdot \: \delta^{\frac{1}{2}} )$, see \cite{VaeRad}. The triple  $(L^2(\bG), \id, \Gamma)$ denotes the GNS-construction for $\psi$.  

\vspace{0.3cm}

The quantum group $\bG$ comes with an unbounded antipode $S: ( {\rm Dom}(S) \subseteq L^\infty(\bG) ) \rightarrow L^\infty(\bG)$ that has a unique polar decomposition $S = R \circ \tau_{-i/2}$. Here, $R: L^\infty(\bG) \rightarrow L^\infty(\bG)$ is the unitary antipode and $\tau: \mathbb{R} \rightarrow {\rm Aut}(L^\infty(\bG))$ is the scaling group. We define \cite[Section 4]{KusUniv},
\[
L^1(\bG)^\sharp = \left\{ \omega \in L^1(\bG) \mid \exists \theta \in L^1(\bG) \textrm{ s.t. } (\theta \otimes \id)(W) = (\omega \otimes \id)(W)^\ast \right\}.
\]
In case $\omega \in L^1(\bG)^\sharp$, the corresponding $\theta \in L^1(\bG)$ satisfies $\theta(x) = \overline{\omega( S(x)^\ast )}$ for every $x \in {\rm Dom}(S)$. Conversely, if ${\rm Dom}(S) \rightarrow \mathbb{C}: x \mapsto \overline{\omega( S(x)^\ast )}$ extends boundedly to $L^\infty(\bG)$, then $\omega \in L^1(\bG)^\sharp$ and we denote this extension by $\omega^\ast$.  The scaling constant $\nu\in \mathbb{R}^+$ is then defined by $\varphi \circ \tau_t = \nu^{-t} \varphi$. 

\vspace{0.3cm}

 There exists a Pontrjagin dual quantum group $\hat{\bG}$ and all its associated objects will be equipped with a hat. The left multiplicative unitary  $W \in L^\infty(\bG) \otimes L^\infty(\hat{\bG})$    fully determines $\bG$ as well as $\hat{\bG}$. We have $\hat{W} = \Sigma W^\ast \Sigma$, where $\Sigma: L^2(\bG) \otimes L^2(\bG) \rightarrow  L^2(\bG) \otimes L^2(\bG)$ is the flip on the Hilbert space level.   $L^\infty(\bG)$ is the $\sigma$-strong-$\ast$ closure of, 
\[
 \left\{ (\id \otimes \omega)(W) \mid \omega \in
B(L^2(\bG))_\ast \right\},  
\]
and  $L^\infty(\hat{\bG})$ is the $\sigma$-strong-$\ast$ closure of, 
\[
 \left\{ (\omega \otimes \id) (W) \mid \omega \in
B(L^2(\bG))_\ast \right\}.  
\]
For $\omega \in
L^1(\bG), \theta \in L^1(\hat{\bG})$, we use the standard notation, 
\[
\lambda(\omega) = (\omega
\otimes \id) (W), \qquad \hat{\lambda}(\theta) = (\id \otimes \theta)(W^\ast). 
\]

The dual left Haar weight $\hat{\varphi}$ on $L^\infty(\hat{\bG})$ is constructed as follows. 
We let $\mathcal{I}$ be the set of
$\omega \in L^1(\bG)$, such that $\Lambda(x) \mapsto \omega(x^\ast),
x \in L^2(\bG) \cap L^\infty(\bG)$ extends to a bounded functional on $L^2(\bG)$.   By the
Riesz theorem, for every $\omega \in \mathcal{I}$, there is a
unique vector denoted by $\xi(\omega) \in L^2(\bG)$ such that,
\[
\omega(x^\ast) = \langle \xi(\omega), \Lambda(x)  \rangle, \quad x \in
L^2(\bG) \cap L^\infty(\bG).
\]
The dual left Haar weight $\hat{\varphi}$ is defined to be
the unique normal, semi-finite, faithful weight on $L^\infty(\hat{\bG})$, with
GNS-construction $(L^2(\bG), \iota, \hat{\Lambda})$ such that
$\lambda(\mathcal{I})$ is a $\sigma$-strong-$\ast$/norm core for
$\hat{\Lambda}$ and $\hat{\Lambda}(\lambda(\omega)) = \xi(\omega),
\omega \in \mathcal{I}$. 

We mention that one can also construct a dual right Haar weight, but we do not use it in this paper. 

\vspace{0.3cm}

There is a collection of relations between the objects we introduced so far and they can all be found in \cite{KusVaeII}. We record them here. $P$ is defined by $P^{it} = \nu^{\frac{t}{2}} \Lambda(\tau_t(x)) $.
\[
\begin{array}{lll}
\varphi \circ R =   \psi & \nabla^{is } \delta^{it } =  \nu^{ist} \delta^{it } \nabla^{is } & \tau_t(x) =  \hat{\nabla}^{it} x \hat{\nabla}^{-it} \\
\varphi \circ \tau_t =  \nu^{-t} \varphi & \hat{\nabla}^{is } \delta^{it } = \delta^{it } \hat{\nabla}^{is } &
R(x) =   \hat{J} x^\ast \hat{J}  \\
\psi \circ \tau_t =   \nu^{-t} \psi  & \hat{\nabla}^{it} \nabla^{is} =  \nu^{ist} \nabla^{is} \hat{\nabla}^{it} &  \nabla^{it} = \hat{P}^{it} J \hat{\delta}^{it} J \\
& \hat{J} \delta \hat{J} =   \delta^{-1} & 
\end{array}
\]
Furthermore,
\[
(\tau_t \otimes \hat{\tau}_t)(W) = W, \qquad (R \otimes \hat{R})(W) = W^\ast. 
\]

\vspace{0.3cm}
\hyphenation{al-ge-bras}

Underlying $\bG$ there exist reduced C$^\ast$-algebraic quantum groups of which the C$^\ast$-algebras are defined by,
\[ 
C_0(\bG) =   \textrm{clo} \left\{ (\id
\otimes \omega)(W) \mid \omega \in B(L^2(\bG))_\ast
\right\}, 
\] 
and
\[
C^\ast_r(\bG) = \textrm{ clo} \left\{ (\omega
\otimes \id)(W) \mid \omega \in B(L^2(\bG))_\ast
\right\},  
\]  
where the closures are norm closures in $B(L^2(\bG))$. It is worth mentioning that in this paper,  when working with $C_0(\bG)$, we always add the assumption that it is coamenable (see Lemma \ref{Lem=CoamenabilityEquivalence} for the definition of coamenability). Hence, there is no distinction between reduced and universal in our notation for the C$^\ast$-algebra of $\bG$. 

\vspace{0.3cm}

There is a universal C$^\ast$-algebraic quantum group $C^\ast_u(\bG)$ whose C$^\ast$-algebra is the univeral completion of $L^1(\bG)^\sharp$ equipped with its Banach $\ast$-algebra structure, \cite{KusUniv}. We use $\mathcal{V}$ for Kusterman's universal multiplicative unitary. For coamenable quantum groups it is contained in $\mathcal{M}(C_0(\bG) \otimes C_u^\ast(\bG))$ (with $\mathcal{M}$ the multiplier algebra and $\otimes$ the minimal tensor product). We use the universal objects briefly in the proof of Theorem \ref{Thm=Haagerup}, explaining its universal properties further.

\subsection*{Corepresentations}
An operator $U \in L^\infty(\bG) \otimes B(\cH_U)$ is called a {\it corepresentation} if $(\Delta \otimes \id)(U) = U_{13} U_{23}$. We call a corepresentation $U$ unitary (resp. invertible)    if $U$ is unitary (resp. invertible)  as an operator.   In case $\bG = (L^\infty(G), \Delta_G)$ is a commutative quantum group, then every (bounded) corepresentation is automatically invertible. For arbitrary quantum groups this is more subtle, c.f. \cite{BraDawSam}. Recall that $W$ is a corepresentation that is moreover unitary.

\subsection*{The Fourier algebra and multipliers}  For operator spaces, we refer to \cite{EffRua}. For a locally compact quantum group $\bG$, the predual $L^1(\bG)$ carries a natural operator space structure. Pulling back the comultiplication to $L^1(\bG)$ yields a {\it convolution product} $\Delta_\ast: L^1(\bG) \hat{\otimes} L^1(\bG)\rightarrow L^1(\bG)$ which will usually be denoted by $\ast$. In this way $L^1(\hat{\bG})$ becomes a completely contractive Banach algebra. We will use $A(\bG) =  \hat{\lambda}(L^1(\hat{\bG}))$. And $\Vert \hat{\lambda}(\omega) \Vert_{A(\bG)} := \Vert \omega \Vert_{L^1(\hat{\bG})}$ ($\hat{\lambda}$ is injective). $A(\bG)^+$ is the subset that corresponds to $L^1(\hat{\bG})^+$. 

An operator $b \in L^\infty(\bG)$ is called a {\it left Fourier multiplier} if for every $\omega \in L^1(\hat{\bG})$ there exists a $\theta \in L^1(\hat{\bG})$ such that $\hat{\lambda}(\theta) = b \hat{\lambda}(\omega)$. In this case left multiplication with $b$ defines a bounded map $A(\bG) \rightarrow A(\bG)$ by the closed graph theorem. In case this map is completely bounded, we write $b \in \MCB(\bG)$, i.e. $b$ is a completely bounded left Fourier multiplier. In this paper all multipliers will be left multipliers and therefore we will drop the indication `{\it left}' in our terminology.  For $b \in \MCB(\bG)$, we have, 
\begin{equation}\label{Eqn=MultiplierLinftyNorm}
\Vert b \Vert_{L^\infty(\bG)} \leq \Vert b \Vert_{\MCB(\bG)}.
\end{equation}
It is also useful to remark that $A(\bG) \subseteq \MCB(\bG)$ and for $\omega \in A(\bG)$ we have, 
\[
\Vert \omega \Vert_{A(\bG)} \geq \Vert \omega \Vert_{\MCB(\bG)}.
\]
A completely bounded multiplier $b \in \MCB(\bG)$ is called {\it positive} if it maps $A(\bG)^+$ to $A(\bG)^+$. {\it Complete positivity} is then defined by being positive on every matrix level of the operator space structure.

\section{A sufficient condition for weak amenability of locally compact quantum groups}\label{Sect=CompactaApproximation}

We call a locally compact quantum group $\bG$ {\it weakly amenable} if there exists a $C \in \mathbb{R}$ and a  net $\{b_k\}$ in $A(\bG)$ such that $\Vert b_k \Vert_{\MCB(\bG)} \leq C$  and for every $c \in A(\bG)$, we have,
\begin{equation}\label{Eqn=ApproximationThing}
\Vert b_k c - c \Vert_{A(\bG)} \rightarrow 0.
\end{equation}
The infimum over all $C \in \mathbb{R}$ such that such a net $b_k$ exists is called the {\it Cowling-Haagerup} constant; notation $\Lambda(\bG)$. 
 
\begin{rmk}
The definition of weak amenability first appears in \cite{CanHaa}, where it is proved that $SO_0(n,1)$ is weakly amenable. In Section \ref{Sect=WeakAmenability} we give a more elaborate discussion of examples of weakly amenable (quantum) groups. 
\end{rmk}

In concrete examples \eqref{Eqn=ApproximationThing} can often be hard to check. In this section, we give a sufficient criterium for weak amenability based on \cite{CanHaa}. This is Theorem \ref{Thm=CompactaApproximation}.   
 
Recall that a locally compact quantum group $\bG$ is called {\it coamenable} if there exists a state $\epsilon$ on $C_0(\bG)$ such that $(\epsilon \otimes \id)(W) = 1$,  see also  Lemma \ref{Lem=CoamenabilityEquivalence} for equivalent definitions.  We also need {\it modular} multipliers.

\begin{dfn}
We call a multiplier $b \in \MCB(\bG)$ {\it modular} if for every $t \in \mathbb{R}$ we have   $\tau_t(b) = b$. 
\end{dfn}
\begin{rmk}
Let $b \in \MCB(\bG)$ be a modular multiplier. Consider the mapping $L^1(\hat{\bG}) \rightarrow L^1(\hat{\bG}): \omega \rightarrow  \hat{\lambda}^{-1}(b\hat{\lambda}(\omega))$ and let $\Phi_b: L^\infty(\hat{\bG}) \rightarrow L^\infty(\hat{\bG})$ be its dual. Then, $\Phi_b$ commutes with the modular automorphsim group $\hat{\sigma}$. Indeed, using Lemma \ref{Lem=ModularAppendix} in the second and fourth equation,  
\[
\begin{split}
& \langle \hat{\sigma}_t(\Phi_b(x)), \omega \rangle_{L^\infty(\hat{\bG}), L^1(\hat{\bG})} \\
=& \langle  x, \hat{\lambda}^{-1}(b\hat{\lambda}(\omega \circ \hat{\sigma}_t)) \rangle_{L^\infty(\hat{\bG}), L^1(\hat{\bG})}  \\
= &  \langle  x, \hat{\lambda}^{-1}(b \tau_{-t}(\hat{\lambda}(\omega )) \delta^{it} ) \rangle_{L^\infty(\hat{\bG}), L^1(\hat{\bG})}  \\
= &  \langle  x, \hat{\lambda}^{-1}(\tau_{-t}( b \hat{\lambda}(\omega )) \delta^{it} ) \rangle_{L^\infty(\hat{\bG}), L^1(\hat{\bG})}  \\
= &  \langle  \hat{\sigma}_t( x) ,\hat{\lambda}^{-1}(  b \hat{\lambda}(\omega ))   \rangle_{L^\infty(\hat{\bG}), L^1(\hat{\bG})}  \\
= &  \langle  \Phi_b(\hat{\sigma}_t(x))  , \omega    \rangle_{L^\infty(\hat{\bG}), L^1(\hat{\bG})}.  
\end{split}
\]
This justifies the terminology {\it modular}. 
\end{rmk}

\vspace{0.3cm}

For any $c \in A(\bG)$ we denote $\omega_c \in L^1(\hat{\bG})$ for the functional such that $\hat{\lambda}(\omega_c) = c$.

\begin{lem}\label{Lem=L1Approx}
Let $\bG$ be a coamenable quantum group. Let $b_k \in \MCB(\bG)$ be a net of positive  multipliers, such that for every $c \in C_0(\bG)$ we have $b_k c \rightarrow c$ in the norm of $C_0(\bG)$. Then, for every $c \in A(\bG)^+$ we have $\Vert b_k c \Vert_{A(\bG)} \rightarrow \Vert c \Vert_{A(\bG)}$. 
\end{lem}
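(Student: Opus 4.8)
The plan is to show that, for a coamenable quantum group, the $A(\bG)$-norm of a \emph{positive} element is computed by the counit, and then to exploit that the counit is norm-continuous on $C_0(\bG)$. Write $\epsilon$ for the state on $C_0(\bG)$ with $(\epsilon \otimes \id)(W) = 1$ supplied by coamenability; since $\epsilon$ is a state we also get $(\epsilon \otimes \id)(W^\ast) = 1$. This is precisely the place where the coamenability hypothesis enters, matching the remark in the introduction that coamenability plays a new role absent from the classical group case.

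First I would establish the identity $\Vert c \Vert_{A(\bG)} = \epsilon(c)$ for all $c \in A(\bG)^+$. Indeed, writing $c = \hat{\lambda}(\omega_c)$ with $c \in A(\bG)^+$, the functional $\omega_c$ lies in $L^1(\hat{\bG})^+$ by the definition of $A(\bG)^+$, so its norm is attained at the unit and $\Vert c \Vert_{A(\bG)} = \Vert \omega_c \Vert_{L^1(\hat{\bG})} = \omega_c(1)$. On the other hand, slicing $W^\ast$ and applying Fubini to the product functional $\epsilon \otimes \omega_c$,
\[
\epsilon(c) = \epsilon\big( (\id \otimes \omega_c)(W^\ast) \big) = (\epsilon \otimes \omega_c)(W^\ast) = \omega_c\big( (\epsilon \otimes \id)(W^\ast) \big) = \omega_c(1),
\]
which yields the claimed identity. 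Note this makes sense because the slices of $W^\ast$ are adjoints of slices of $W$ and hence lie in the domain $C_0(\bG)$ of $\epsilon$.

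With the identity in hand the conclusion is immediate. For $c \in A(\bG)^+$, positivity of the multiplier $b_k$ gives $b_k c \in A(\bG)^+$, so the identity applies to $b_k c$ as well, $\Vert b_k c \Vert_{A(\bG)} = \epsilon(b_k c)$. Since $\epsilon$ is a state on the C$^\ast$-algebra $C_0(\bG)$ it is contractive, and by hypothesis $b_k c \to c$ in the norm of $C_0(\bG)$; therefore $\epsilon(b_k c) \to \epsilon(c) = \Vert c \Vert_{A(\bG)}$, which is exactly the assertion.

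The only delicate point is the identity $\Vert c \Vert_{A(\bG)} = \epsilon(c)$, i.e. the Fubini manipulation on $W^\ast$: one must check that $(\id \otimes \omega_c)(W^\ast)$ lies in $C_0(\bG)$, that the product functional $\epsilon \otimes \omega_c$ may be sliced against the bounded operator $W^\ast$ in either order, and that the counit slice $(\epsilon \otimes \id)(W^\ast)$ makes sense and equals $1$. All three are standard once one recalls that $W$ sits in the relevant multiplier algebra and that $C_0(\bG)$ is $\ast$-closed; everything else is positivity bookkeeping and the contractivity of $\epsilon$.
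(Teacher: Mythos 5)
Your proof is correct and follows essentially the same route as the paper: both arguments use positivity to write $\Vert c\Vert_{A(\bG)} = \omega_c(1)$, then slice $W^\ast$ against the counit $\epsilon$ from coamenability to identify this with $\epsilon(c)$, and conclude by norm-continuity of $\epsilon$ together with $b_k c \to c$ in $C_0(\bG)$. The Fubini step you flag as delicate is exactly the pairing $\langle (\epsilon\otimes\id)(W^\ast), \omega\rangle = \langle \epsilon, (\id\otimes\omega)(W^\ast)\rangle$ that the paper also uses.
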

\begin{proof}
Take $c \in A(\bG)^+$ so that  $\omega_c$ is a positive functional. In that case also $\omega_{b_k c}$ is positive. Since $\bG$ is coamenable, by definition  there is a bounded positive functional $\epsilon \in C_0(\bG)^\ast$ such that $(\epsilon \otimes \id)(W) = 1$. Then, also $(\epsilon \otimes \id)(W^\ast) = 1$. Then, using the positivity of  $\omega_{b_k c}$ in the first equality and in addition the positivity of $\epsilon$ in the third equality,
\[
\begin{split}
\Vert b_k c \Vert_{A(\bG)} = & \omega_{b_kc}(1) = \langle (\epsilon \otimes \id)(W^\ast), \omega_{b_k c} \rangle_{L^\infty(\hat{\bG}), L^1(\hat{\bG})} \\=& \langle \epsilon, (\id \otimes \omega_{b_k c} )(W^\ast)\rangle_{C_0(\bG)^\ast, C_0(\bG)}= \epsilon(b_k c).
\end{split}
\]
Similarly, $\Vert c \Vert_{A(\bG)} = \epsilon(c)$. Taking the limit $k \rightarrow \infty$, 
\begin{equation} 
\Vert b_k c \Vert_{A(\bG)} 
= \epsilon(b_k c)  \rightarrow \epsilon(c) = 
\Vert c \Vert_{A(\bG)}. 
\end{equation} 
This proves the lemma.
\end{proof}

The following result is the main theorem of this section. It relies on Lemma \ref{Lem=Technical}, which we shall prove in the remainder of this section. Throughout the proof we use convergences in the norm of $L^\infty(\bG)$, which in fact take place in the smalller C$^\ast$-algebra $C_0(\bG)$. We will not incorporate this in the notation. Recall also that the dense subspace, 
\[
L^1(\bG)^\flat \subseteq L^1(\bG),
\]
is defined in Lemma \ref{Lem=TomitaTransform}. Recall also that for $\omega \in L^1(\bG)^\flat$ and $z \in \mathbb{C}$ we denote $\omega_{[z]}\in L^1(\bG)$ for the unique functional such that $\lambda(\omega_{[z]}) = \hat{\sigma}_z( \lambda(\omega))$. In Lemma \ref{Lem=TomitaTransform} we proved that $\mathbb{C} \rightarrow L^1(\bG): z \mapsto \omega_{[z]}$ is analytic for every $\omega \in L^1(\bG)^\flat$.

\begin{thm}\label{Thm=CompactaApproximation}
Let $\mathbb{G}$ be a coamenable quantum group.  Let  $b_k \in A(\bG)$ be a sequence of positive modular multipliers such that $\Vert b_k \Vert_{\MCB(\bG)}$ is bounded and such that for every $c \in C_0(\bG)$ we have $\Vert b_k c - c \Vert_{L^\infty(\bG)} \rightarrow 0$. Then,  for every $c \in A(\bG)$ as $k \rightarrow \infty$, 
\begin{equation}\label{Eqn=Approx}
\Vert b_k c - c \Vert_{A(\bG)} \rightarrow 0.
\end{equation}
That is, $\bG$ is weakly amenable with Cowling-Haagerup constant smaller than or equal to
$\limsup_{k \in K} \Vert b_k \Vert_{\MCB(\bG)}$.
\end{thm}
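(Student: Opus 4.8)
The plan is to transport the convergence \eqref{Eqn=Approx} from the $A(\bG)$-norm into the Hilbert space $L^2(\bG)$, where one can exploit the rigidity of Hilbert space (weak convergence together with convergence of norms forces norm convergence), exactly in the spirit of \cite{CanHaa}. Since every element of $L^1(\hat{\bG})$ is a linear combination of four positive normal functionals and each map $c \mapsto b_k c$ is linear, it suffices to prove \eqref{Eqn=Approx} for $c \in A(\bG)^+$; the general case follows by the triangle inequality. So I would fix $c \in A(\bG)^+$, so that $\omega_c \in L^1(\hat{\bG})^+$, and note that positivity of the multipliers $b_k$ gives $\omega_{b_k c} \in L^1(\hat{\bG})^+$ as well. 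I would then represent these positive normal functionals by vectors $\xi_c, \xi_{b_k c}$ in the natural positive cone of the standard form of $L^\infty(\hat{\bG})$ on $L^2(\bG)$, so that $\omega_c = \omega_{\xi_c}$ and $\omega_{b_k c} = \omega_{\xi_{b_k c}}$ on $L^\infty(\hat{\bG})$, and $\|\omega_c\| = \|\xi_c\|^2$, $\|\omega_{b_k c}\| = \|\xi_{b_k c}\|^2$. The Powers--St\o rmer inequalities for cone vectors read
\[
\|\xi_c - \xi_{b_k c}\|^2 \le \|\omega_c - \omega_{b_k c}\|_{L^1(\hat{\bG})} \le \|\xi_c - \xi_{b_k c}\| \, \|\xi_c + \xi_{b_k c}\|,
\]
so the right-hand inequality reduces everything to proving $\|\xi_c - \xi_{b_k c}\| \to 0$, the bounded factor $\|\xi_c + \xi_{b_k c}\|$ keeping the product small.

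To obtain norm convergence of the vectors I would combine two ingredients. On the one hand, Lemma \ref{Lem=L1Approx} gives $\|\omega_{b_k c}\| \to \|\omega_c\|$, that is $\|\xi_{b_k c}\| \to \|\xi_c\|$, so in particular $\{\xi_{b_k c}\}$ is bounded. On the other hand, I would establish the weak convergence $\xi_{b_k c} \to \xi_c$ in $L^2(\bG)$. Granting both, the Radon--Riesz property of Hilbert space (weak convergence plus convergence of norms implies norm convergence) yields $\|\xi_{b_k c} - \xi_c\| \to 0$, and the Powers--St\o rmer bound then gives $\|b_k c - c\|_{A(\bG)} = \|\omega_{b_k c} - \omega_c\| \to 0$, which is \eqref{Eqn=Approx}. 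For the last assertion, since $b_k \in A(\bG)$ and $\|b_k\|_{\MCB(\bG)}$ is bounded, the very definition of the Cowling--Haagerup constant gives $\Lambda(\bG) \le \limsup_k \|b_k\|_{\MCB(\bG)}$.

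The heart of the matter, and the step I expect to be the main obstacle, is the weak $L^2$-convergence $\xi_{b_k c} \to \xi_c$; this is precisely the content of Lemma \ref{Lem=Technical}. Since $\{\xi_{b_k c}\}$ is bounded, it is enough to test against a dense set of vectors, for which I would use $\Lambda(y)$ with $y$ in the Tomita algebra $\cT_\varphi$ (a core, hence yielding a dense set). The delicate point is that the cone vector $\xi_{b_k c}$ representing $\omega_{b_k c}$ must be related to the action of the multiplier on $\xi_c$, and this is exactly where the \emph{modularity} hypothesis $\tau_t(b_k) = b_k$ enters: by the Remark preceding the theorem it forces the dual map $\Phi_{b_k}$ to commute with the modular group $\hat{\sigma}$, so that the multiplier respects the KMS/modular structure and maps the natural cone into itself with a controllable implementation. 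Concretely I would rewrite the pairings $\langle \xi_{b_k c}, \Lambda(y)\rangle$ using the analytic functionals $\omega_{[z]}$ and the analyticity from Lemma \ref{Lem=TomitaTransform} in order to move the modular operators $\nabla^{it}$ past the multiplier, after which the hypothesis $\|b_k c' - c'\|_{L^\infty(\bG)} \to 0$ for $c' \in C_0(\bG)$ converts into convergence of these pairings to $\langle \xi_c, \Lambda(y)\rangle$.

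Without the modular assumption the representing vectors would not transform predictably under $J$ and $\nabla$, and the reduction to the Hilbert-space argument of \cite{CanHaa} would break down; likewise, coamenability is indispensable, as it is the only input guaranteeing the norm convergence of Lemma \ref{Lem=L1Approx} that feeds the Radon--Riesz step. Carrying out this modular bookkeeping carefully — identifying the cone vectors, justifying the analytic continuations, and confirming that the $C_0(\bG)$-approximate identity property passes to the Hilbert-space pairings — is the main technical burden, and is what I expect to occupy the remainder of the section in the proof of Lemma \ref{Lem=Technical}.
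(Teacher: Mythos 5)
Your overall architecture coincides with the paper's: reduce to positive $c$, represent the functionals by vectors in $L^2(\hat{\bG})$, get convergence of the norms of these vectors from Lemma \ref{Lem=L1Approx} (this is indeed where coamenability enters), establish weak convergence of the vectors, and conclude norm convergence by the Radon--Riesz property. The Powers--St\o rmer lower bound is superfluous here -- the elementary estimate $\Vert \omega_\xi - \omega_\eta \Vert \leq \Vert \xi - \eta \Vert (\Vert \xi \Vert + \Vert \eta \Vert)$ is all that is needed and is all the paper uses -- so that part of your plan is harmless but also buys nothing.

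The gap is in the one step that carries the entire weight of the theorem: the weak convergence of the representing vectors. It is \emph{not} ``precisely the content of Lemma \ref{Lem=Technical}.'' That lemma only produces, for $\omega_c$ of the special form $d\hat{\varphi}d^\ast$ with $d = \lambda((\omega^\ast \ast \omega)_{[-i/2]})$ (a reduction you also omit: it is Lemma \ref{Lem=GapLemma}, and it must precede any appeal to Lemma \ref{Lem=Technical}), operators $d_k \in \cT_{\hat{\varphi}}$ with $\Vert d_k \hat{\varphi} d_k^\ast - \omega_{b_k c} \Vert < 1/k$ such that $\hat{\sigma}_{i/2}(d_k)$ is positive and $\hat{\sigma}_{i/2}(d_k)\hat{\sigma}_{i/2}(d_k)^\ast$ is bounded in $L^\infty(\hat{\bG})$. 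The weak convergence $\hat{\Lambda}(d_k) \rightarrow \hat{\Lambda}(d)$ still has to be extracted from these properties, and the obstruction your sketch does not confront is that the hypothesis $\Vert b_k c' - c' \Vert_{L^\infty(\bG)} \rightarrow 0$ only controls the \emph{quadratic} expressions $\omega_{b_k c}(\lambda(\theta)) = \theta(b_k c) \approx \hat{\varphi}(d_k^\ast \lambda(\theta) d_k)$, i.e.\ sesquilinear pairings in $\hat{\Lambda}(d_k)$, never the \emph{linear} pairings $\langle \hat{\Lambda}(d_k), \eta \rangle$ you need for weak convergence. The paper bridges this by reading the convergence of $\hat{\varphi}(d_k^\ast e^\ast f d_k)$ as weak-$\ast$ convergence of the positive operators $\hat{\sigma}_{i/2}(d_k)\hat{\sigma}_{i/2}(d_k)^\ast$, taking square roots via polynomial approximation of $t \mapsto \sqrt{t}$ on a compact interval (this is exactly where the boundedness and positivity of $\hat{\sigma}_{i/2}(d_k)$ from Lemma \ref{Lem=Technical} are indispensable) to obtain weak-$\ast$ convergence of $\hat{\sigma}_{i/2}(d_k)$ itself, and only then deducing $\langle \hat{\Lambda}(d_k), \hat{\Lambda}(ef) \rangle \rightarrow \langle \hat{\Lambda}(d), \hat{\Lambda}(ef) \rangle$ for $e,f \in \cT_{\hat{\varphi}}$. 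This quadratic-to-linear passage is a genuine idea missing from your proposal, and your decision to work with the canonical positive-cone vectors makes matters worse rather than better: there is no usable formula for the natural-cone representative of $\omega_{b_k c}$, whereas the $d_k$ of Lemma \ref{Lem=Technical} are manufactured from $b_k$ acting on explicit elements of $L^1(\bG)$ precisely so that the relevant pairings can be computed.
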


\begin{rmk}
For clarity, let us make the following remark on the convergence \eqref{Eqn=Approx}. Note that each $b_k$ is in $A(\bG)$ and not in $\MCB(\bG)$. So each $b_k$ corresponds to a functional $\hat{\omega}_k \in L^1(\hat{\bG})^+$ and similarly $c \in A(\bG)$ corresponds to a $\omega_c \in L^1(\hat{\bG})$. The convergence \eqref{Eqn=Approx} can be restated at the $L^1(\hat{\bG})$-level, namely $\Vert \omega_k \ast \omega_c - \omega_c \Vert_{L^1(\hat{\bG})} \rightarrow 0$.  
\end{rmk}
\begin{proof}[Proof of Theorem \ref{Thm=CompactaApproximation}]

Since $\Vert b_k \Vert_{\MCB(\bG)}$ is bounded, it follows from a $3\epsilon$-argument that we may prove \eqref{Eqn=Approx} for a dense set of $c \in A(\bG)^+$.  
  By Lemma \ref{Lem=GapLemma}, in order to prove our theorem, we may assume that $\omega_c(x) = \langle x \hat{\Lambda}(d), \hat{\Lambda}(d) \rangle =:  (d \hat{\varphi} d^\ast)(x)$, with $d = \lambda((\omega^\ast \ast \omega)_{[-i/2]}), \omega \in L^1(\bG)^\flat$. In particular $\hat{\sigma}_{i/2}(d) = \lambda(\omega^\ast \ast \omega) \geq 0$.    
 
 Moreover,  using Lemma \ref{Lem=Technical}, we may assume that there exist $d_k \in \cT_{\hat{\varphi}}$ such that $\Vert \omega_{b_kc} - d_k \hat{\varphi} d_k^\ast \Vert_{L^1(\hat{\bG})} < \frac{1}{k}$ and such that $\hat{\sigma}_{i/2}(d_k)\hat{\sigma}_{i/2}(d_k)^\ast$ is a bounded sequence in $L^\infty(\hat{\bG})$ and that in fact $\hat{\sigma}_{i/2}(d_k) \in L^\infty(\hat{\bG})^+$. We sometimes write  $\hat{\sigma}_{i/2}(d_k)^\ast$ to clarify our equalities, even though this is a positive operator.  

 It suffices  then  to prove that:
\begin{equation}\label{Eqn=AlternativeConvergence}
\Vert d_k \hat{\varphi} d_k^\ast - d \hat{\varphi} d^\ast \Vert_{L^1(\hat{\bG})} \rightarrow 0.
\end{equation}
Since for $x \in L^\infty(\hat{\bG})$ we have that $(d_k \hat{\varphi} d_k^\ast)(x) = \langle x \hat{\Lambda}(d_k), \hat{\Lambda}(d_k) \rangle$ and similarly with $d_k$ replaced by $d$, it suffices to prove that $\hat{\Lambda}(d_k) \rightarrow \hat{\Lambda}(d)$ in the norm of $L^2(\hat{\bG})$. Now,
\[
\begin{split}
& \Vert \hat{\Lambda}(d_k) - \hat{\Lambda}(d) \Vert_{L^2(\hat{\bG})}^2 
= \langle  \hat{\Lambda}(d_k) - \hat{\Lambda}(d),  \hat{\Lambda}(d_k) - \hat{\Lambda}(d)  \rangle
\\ 
= & \langle   \hat{\Lambda}(d_k) ,  \hat{\Lambda}(d_k)  \rangle + \langle   \hat{\Lambda}(d) ,  \hat{\Lambda}(d)  \rangle - 2 \Re \left( \langle \hat{\Lambda}(d_k) ,\hat{\Lambda}(d) \rangle \right). 
\end{split}
\]
Hence, it suffices to prove that,
\begin{equation}\label{Eqn=SufficientCondition}
\Vert \hat{\Lambda}(d_k) \Vert_{L^2(\hat{\bG})} \rightarrow \Vert \hat{\Lambda}(d) \Vert_{L^2(\hat{\bG})} \quad \textrm{ and } \quad \langle \hat{\Lambda}(d_k) - \hat{\Lambda}(d),\hat{\Lambda}(d) \rangle \rightarrow 0.
\end{equation}

For the left condition of \eqref{Eqn=SufficientCondition}, we find 
\[
\begin{split}
& \left| \Vert \hat{\Lambda}(d_k) \Vert_{L^2(\hat{\bG})}^2 -  \Vert \hat{\Lambda}(d) \Vert_{L^2(\hat{\bG})}^2  \right|  
=  \left| (d_k \hat{\varphi} d_k^\ast)(1) - (d \hat{\varphi} d^\ast)(1) \right| \\
\leq & \left| \omega_{b_k c}(1) -  \omega_{c}(1) \right| + \frac{1}{k} 
 =   \left|   \Vert b_k c \Vert_{A(\bG)} - \Vert c \Vert_{A(\bG)} \right| + \frac{1}{k},
\end{split}
\]
and from  from Lemma \ref{Lem=L1Approx} (this is where we use that $\bG$ is coamenable) it follows that this expression converges to 0. Hence, it remains to check the right condition of \eqref{Eqn=SufficientCondition}. 

Let $\theta \in L^1(\bG)^+$.
Set $x = \lambda(\theta)$, then as $k \rightarrow \infty$, 
\[
\begin{split}
& \vert \hat{\varphi}(d^\ast x d) - \hat{\varphi}(d_k^\ast x d_k)\vert  
\leq   \vert \omega_c(x) - \omega_{b_k c}(x) \vert + \frac{1}{k} \Vert x \Vert_{L^\infty(\hat{\bG})}  \\
= &  \vert \theta(c)  - \theta(b_k c) \vert + \frac{1}{k} \Vert x \Vert_{L^\infty(\hat{\bG})}   \rightarrow 0. 
\end{split}
\]   
Since we proved that $d_k \hat{\varphi} d_k^\ast$ is a bounded sequence in $L^1(\hat{\bG})$ (i.e. the left part of \eqref{Eqn=SufficientCondition}) and the span of $\lambda(\theta)$ with $\theta \in L^1(\bG)^+$ is norm dense in $C_r^\ast(\bG)$, we find that in fact, \begin{equation}\label{Eqn=PartialGoToZero}
\vert \hat{\varphi}(d^\ast x d) - \hat{\varphi}(d_k^\ast x d_k)\vert \rightarrow 0 \qquad \textrm{ for every } x \in C^\ast_r(\bG).
\end{equation} 

Let $e,f \in C^\ast_r(\bG)$ be such that $e,f \in \cT_{\hat{\varphi}}$. The existence of such elements is guaranteed by Lemma \ref{Lem=TomitaTransform}. Moreover, such elements are norm dense in $C^\ast_r(\bG)$. Then,
\[
\begin{split}
 & \hat{\varphi}(d_k^\ast e^\ast f d_k)  
=   \hat{\varphi}( \hat{\sigma}_{i/2}(fd_k)  \hat{\sigma}_{-i/2}(d_k^\ast e^\ast)   )  \\
=  &  \langle \hat{\sigma}_{i/2}(d_k)\hat{\sigma}_{i/2}(d_k)^\ast \hat{\Lambda}(\hat{\sigma}_{-i/2}(e^\ast) ), \hat{\Lambda}(\hat{\sigma}_{i/2}(f)^\ast ) \rangle,    
\end{split}
\]  
and similarly,
\[
\begin{split}
 \hat{\varphi}(d^\ast e^\ast f d) = \langle \hat{\sigma}_{i/2}(d)\hat{\sigma}_{i/2}(d)^\ast \hat{\Lambda}(\hat{\sigma}_{-i/2}(e^\ast) ), \hat{\Lambda}(\hat{\sigma}_{i/2}(f)^\ast ) \rangle.
\end{split}
\]
By \eqref{Eqn=PartialGoToZero}, we see that $\vert  \hat{\varphi}(d^\ast e^\ast f d)  -  \hat{\varphi}(d_k^\ast e^\ast f d_k) \vert \rightarrow 0$ and since we assumed that $\hat{\sigma}_{i/2}(d_k)\hat{\sigma}_{i/2}(d_k)^\ast$ is a bounded sequence, we find that for all vectors $\xi, \eta \in L^2(\bG)$, 
\[
 \langle \hat{\sigma}_{i/2}(d_k)\hat{\sigma}_{i/2}(d_k)^\ast \xi, \eta \rangle \rightarrow \langle \hat{\sigma}_{i/2}(d)\hat{\sigma}_{i/2}(d)^\ast \xi, \eta \rangle.
\]
Approximating $t \mapsto \sqrt{t}$ with polynomials on the compact interval from $0$ to the number $\sup_k \Vert\hat{\sigma}_{i/2}(d_k)\hat{\sigma}_{i/2}(d_k)^\ast\Vert_{L^\infty(\bG)}$, this then implies that, 
 \[
 \langle \vert \hat{\sigma}_{i/2}(d_k)^\ast \vert \xi, \eta \rangle \rightarrow \langle \vert \hat{\sigma}_{i/2}(d)^\ast\vert  \xi, \eta \rangle.
\]
Henceforth, using positivity of $\hat{\sigma}_{i/2}(d_k)$ and $\hat{\sigma}_{i/2}(d)$ (see the assumptions in the first paragraph of our proof),
 \[
 \langle  \hat{\sigma}_{i/2}(d_k)^\ast  \xi, \eta \rangle \rightarrow \langle  \hat{\sigma}_{i/2}(d)^\ast  \xi, \eta \rangle.
\]
Now, let again $e, f \in \mathcal{T}_{\hat{\varphi}}$. Then,
\[
\begin{split}
&\langle \hat{\Lambda}(ef), \hat{\Lambda}(d_k) \rangle \\
= & \hat{\varphi}(d_k^\ast ef )  \\
= & \hat{\varphi}(\hat{\sigma}_{i/2}(f) \hat{\sigma}_{-i/2}(d_k^\ast) \hat{\sigma}_{-i/2}(e)     )\\
= & \langle \hat{\sigma}_{i/2}(d_k)^\ast \hat{\Lambda}(\hat{\sigma}_{-i/2}(e)  ), \hat{\Lambda}(\hat{\sigma}_{i/2}(f)^\ast )   \rangle \\
\rightarrow & \langle \hat{\sigma}_{i/2}(d)^\ast \hat{\Lambda}(\hat{\sigma}_{-i/2}(e)  ), \hat{\Lambda}(\hat{\sigma}_{i/2}(f)^\ast )   \rangle \\ 
= & \langle \hat{\Lambda}(ef), \hat{\Lambda}(d) \rangle,
\end{split}
\] 
where the last equation follows by following the first equations backwards but with $d$ instead of $d_k$. Using the proved fact that $\Vert \hat{\Lambda}(d_k)\Vert_{L^2(\bG)}$ is bounded, and the standard fact that $\hat{\Lambda}(\cT_{\hat{\varphi}}^2)$ is dense in $L^2(\hat{\bG})$ we find that, 
\[
\langle \hat{\Lambda}(d_k) - \hat{\Lambda}(d), \xi\rangle \rightarrow 0,
\]
for all vectors $\xi \in L^2(\bG)$. In all, we have proved \eqref{Eqn=SufficientCondition} and hence conclude the theorem. 
\end{proof}

We now prove the technical lemmas that are needed to make the assumptions at the beginning of the proof of Theorem \ref{Thm=CompactaApproximation}. 

\begin{lem}\label{Lem=GapLemma}
The   functionals on $L^\infty(\hat{\bG})$ given by
\[
a \mapsto \langle a \hat{\Lambda}( \lambda(  (\omega^\ast \ast \omega)_{[-i/2]} )), \hat{\Lambda}( \lambda(  (\omega^\ast \ast \omega)_{[-i/2]} )) \rangle,
\]
with $\omega \in L^1(\bG)^\flat$ are dense in $L^1(\hat{\bG})^+$. 
\end{lem}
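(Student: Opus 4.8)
The plan is to pass to the commutant of $L^\infty(\hat{\bG})$ and reduce the statement to a density of vector functionals. Write $M=L^\infty(\hat{\bG})$ and $v(\omega)=\hat{\Lambda}(\lambda((\omega^\ast\ast\omega)_{[-i/2]}))$, so the functional in the statement is $x\mapsto\langle x\,v(\omega),v(\omega)\rangle$. First I would use that $\lambda$ is multiplicative for convolution and $\lambda(\omega^\ast)=\lambda(\omega)^\ast$, so that $p:=\lambda(\omega^\ast\ast\omega)=\lambda(\omega)^\ast\lambda(\omega)\ge0$, that $\lambda((\omega^\ast\ast\omega)_{[-i/2]})=\hat{\sigma}_{-i/2}(p)$ by definition of the transform, and that the Tomita relation $\hat{\Lambda}(\hat{\sigma}_{-i/2}(p))=\hat{J}\hat{\Lambda}(p)$ (valid since $p=p^\ast$) gives $v(\omega)=\hat{J}\hat{\Lambda}(\lambda(\omega)^\ast\lambda(\omega))$. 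A short computation with the anti-unitary $\hat{J}$ turns the functional into $\langle\gamma(x)\hat{\Lambda}(p),\hat{\Lambda}(p)\rangle$, where $\gamma(x)=\hat{J}x^\ast\hat{J}$ is the canonical normal $\ast$-anti-isomorphism of $M$ onto its commutant $M'$. Since $\Psi\mapsto\Psi\circ\gamma$ is an isometric affine bijection of the positive predual of $M'$ onto that of $M$, the lemma becomes equivalent to the density, in the positive predual of $M'$, of the functionals $\Psi_p:=\langle\,\cdot\,\hat{\Lambda}(p),\hat{\Lambda}(p)\rangle|_{M'}$ with $p=\lambda(\omega)^\ast\lambda(\omega)$ and $\omega\in L^1(\bG)^\flat$.

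Next I would identify a convenient dense family of target functionals on $M'$ by a polar-decomposition trick. For $b\in\cT_{\hat{\varphi}}$ write $b=u|b|$ with $u\in M$ a partial isometry; then $\hat{\Lambda}(b)=u\hat{\Lambda}(|b|)$, and because $u$ and $u^\ast u$ lie in $M$ (hence commute with every $y\in M'$) while $u^\ast u|b|=|b|$, one obtains $\langle y\hat{\Lambda}(b),\hat{\Lambda}(b)\rangle=\langle y\hat{\Lambda}(|b|),\hat{\Lambda}(|b|)\rangle$ for all $y\in M'$. Thus $\langle\,\cdot\,\hat{\Lambda}(b),\hat{\Lambda}(b)\rangle|_{M'}=\Psi_{|b|}$ with $|b|\ge0$, and as $b$ ranges over $\cT_{\hat{\varphi}}$ — so that $\hat{\Lambda}(b)$ ranges over a dense subset of $L^2(\hat{\bG})$ — these functionals are dense in the positive predual of $M'$. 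It therefore suffices to approximate each $\Psi_{p_0}$, $p_0\ge0$ of the form $|b|$, by functionals $\Psi_{\lambda(\omega)^\ast\lambda(\omega)}$, for which it is in turn enough to approximate $\hat{\Lambda}(p_0)$ in $L^2(\hat{\bG})$ by the vectors $\hat{\Lambda}(\lambda(\omega)^\ast\lambda(\omega))=\lambda(\omega)^\ast\xi(\omega)$, by the elementary estimate $\Vert\langle\,\cdot\,\xi,\xi\rangle-\langle\,\cdot\,\zeta,\zeta\rangle\Vert\le(\Vert\xi\Vert+\Vert\zeta\Vert)\Vert\xi-\zeta\Vert$.

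To carry out this last approximation I would choose, using the density and core properties of $\lambda(L^1(\bG)^\flat)$ from Lemma \ref{Lem=TomitaTransform} and the Appendix, elements $\omega\in L^1(\bG)^\flat$ with $\lambda(\omega)$ uniformly bounded, $\lambda(\omega)\to p_0^{1/2}$ $\sigma$-strongly-$\ast$ and $\hat{\Lambda}(\lambda(\omega))\to\hat{\Lambda}(p_0^{1/2})$, so that $\lambda(\omega)^\ast\xi(\omega)\to p_0^{1/2}\hat{\Lambda}(p_0^{1/2})=\hat{\Lambda}(p_0)$. The step I expect to be the main obstacle is exactly this: the square root $p_0^{1/2}$ need not lie in the domain of $\hat{\Lambda}$, and the approximants must simultaneously be kept inside the analytic class $L^1(\bG)^\flat$ (so that $(\omega^\ast\ast\omega)_{[-i/2]}$, hence $v(\omega)$, is defined), remain bounded, and converge in $\hat{\Lambda}$-graph norm. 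Overcoming it requires a spectral truncation of $p_0$ together with the modular-analyticity and density results prepared for $L^1(\bG)^\flat$; granting these, the reductions above close the argument.
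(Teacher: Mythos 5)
Your proposal is correct and follows essentially the same route as the paper: the paper likewise rewrites the functional as $a\mapsto\langle\hat J a^\ast\hat J\,\hat\Lambda(p),\hat\Lambda(p)\rangle$ with $p=\lambda(\omega^\ast\ast\omega)\ge 0$ (via Terp's functionals $\hat\varphi_{x^\ast x}=\hat\varphi_{\vert x\vert^2}$ rather than your commutant/polar-decomposition phrasing), reduces to positive $x\in L^2(\hat{\bG})\cap L^\infty(\hat{\bG})$, and resolves the square-root obstacle you flag exactly as you predict, by the spectral truncation $x_n=\int_{1/n}^\infty\lambda\,dE_x(\lambda)$ (so that $\sqrt{x_n}\le\sqrt n\,x_n$ lies in $L^2(\hat{\bG})\cap L^\infty(\hat{\bG})$) followed by the core property of $\lambda(L^1(\bG)^\flat)$ from Lemma \ref{Lem=TomitaTransform}. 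The step you leave as ``granted'' is thus precisely the paper's closing argument and presents no further difficulty.
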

\begin{proof}
For $x, y \in L^2(\hat{\bG}) \cap L^\infty(\hat{\bG} )$, there exists a unique functional $\hat{\varphi}_{y^\ast x} \in L^1(\hat{\bG})$ such that for $e, f \in \cT_{\hat{\varphi}}$ we have \cite[Proposition 4]{Terp},
\begin{equation}\label{Eqn=NewFunctEqn}
\begin{split}
\hat{\varphi}_{y^\ast x} (e^\ast f) = & \langle \hat{J} x^\ast y \hat{J} \hat{\Lambda}(f), \hat{\Lambda}(e) \rangle \\
= & \langle \hat{J} f^\ast e \hat{J} \hat{\Lambda}(x), \hat{\Lambda}(y) \rangle.
\end{split}
\end{equation}
From the second line of \eqref{Eqn=NewFunctEqn} we infer that $\{ \hat{\varphi}_{x^\ast x} \mid x \in L^2(\hat{\bG}) \cap L^\infty(\hat{\bG}) \}$ is dense in $L^1(\hat{\bG})^+$. (Indeed, every positive normal functional on $L^\infty(\hat{\bG})$ is of the form $\omega_{\xi, \xi} = \omega_{\hat{J} \hat{J} \xi, \hat{J} \hat{J} \xi}$ and $\hat{J} \xi$ can be approximated with $\hat{\Lambda}(x)$ with $x \in L^2(\hat{\bG}) \cap L^\infty(\hat{\bG})$). Since we have $\hat{\varphi}_{x^\ast x} = \hat{\varphi}_{\vert x \vert^2 }$, we see that the functionals given by,
\[
a \mapsto \langle \hat{J} a^\ast \hat{J} \hat{\Lambda}(x), \hat{\Lambda}(x) \rangle,
\] 
with $x \in L^2(\hat{\bG}) \cap L^\infty(\hat{\bG})^+$ are dense in $L^1(\hat{\bG})^+$. 

Now, let $x \in  L^2(\hat{\bG}) \cap L^\infty(\hat{\bG})^+$ with spectral decomposition $x = \int_{0}^\infty \lambda dE_x(\lambda)$. Put $x_n = \int_{1/n}^\infty \lambda dE_x(\lambda)$. Then $\Vert x - x_n \Vert_{L^\infty(\hat{\bG})} \rightarrow 0$ and clearly $x_n \leq x$. Moreover, this implies that $x_n \in  L^2(\hat{\bG}) \cap L^\infty(\hat{\bG})^+$, and 
\[
\Vert \hat{\Lambda}(x_n) - \hat{\Lambda}(x) \Vert^2_{L^2(\hat{\bG})} = \hat{\varphi}( x^2) + \hat{\varphi}(x_n^2) - \hat{\varphi}(x x_n) - \hat{\varphi}(x_n x) \rightarrow 0, 
\]
since $\hat{\varphi}$ is normal and $\sup_n (x_n^2) = x^2 = \sup_n (x_n x)$. The conclusion is that the  functionals 
\[
a \mapsto \langle \hat{J} a^\ast \hat{J} \hat{\Lambda}(x), \hat{\Lambda}(x) \rangle,
\]  
with 
\begin{equation}\label{Eqn=XCondition}
x \in L^2(\hat{\bG}) \cap L^\infty(\hat{\bG})^+ \quad \textrm{ such that } x = \int_{1/n}^\infty \lambda dE_x(\lambda) \quad  \textrm{ for some } \quad n \in \mathbb{N},
\end{equation}
are dense in $L^1(\hat{\bG})^+$. 

Now, let $x$ indeed satisfy the condition \eqref{Eqn=XCondition}. Put $y = \sqrt{x}$. Then, $y \leq  \sqrt{n} x$, so that $y \in L^2(\hat{\bG}) \cap L^\infty(\hat{\bG})^+$. Since $\lambda( L^1(\bG)^\flat ) $ forms a $\sigma$-strong-$\ast$/norm core for $\hat{\Lambda}$, c.f. Lemma \ref{Lem=TomitaTransform}, we can find a net $\{ \omega_i \}$ in $L^1(\bG)^\flat$, such that $\lambda(\omega_i)$ is a (bounded) net converging to $y$ in the $\sigma$-strong-$\ast$ topology and $\xi(\omega_i) = \hat{\Lambda}(\lambda(\omega_i)) \rightarrow \hat{\Lambda}(y)$ in norm. Then, a $2\epsilon$-estimate shows that $\hat{\Lambda}( \lambda( \omega_i^\ast \ast \omega_i) ) = \lambda(\omega_i)^\ast \hat{\Lambda}(\lambda(\omega_i)) \rightarrow y^\ast \hat{\Lambda}(y) = \hat{\Lambda}(x)$ in norm. From this it follows that the functionals,
\[
a \mapsto \langle \hat{J} a^\ast \hat{J} \hat{\Lambda}( \lambda( \omega^\ast \ast \omega ) ), \hat{\Lambda}(\lambda(\omega^\ast \ast \omega)) \rangle, 
\]   
with $\omega \in L^1(\bG)^\flat$ are dense in $L^1(\hat{\bG})^+$. But, using that $\hat{J} = \hat{\nabla}^{1/2} \hat{J} \hat{\nabla}^{1/2}$ and $\lambda(\omega^\ast \ast \omega) \geq 0$,  
\[
\begin{split}
& \langle \hat{J} a^\ast \hat{J} \hat{\Lambda}( \lambda( \omega^\ast \ast \omega ) ), \hat{\Lambda}(\lambda(\omega^\ast \ast \omega)) \rangle \\
= &  \langle  a \hat{J} \hat{\Lambda}( \lambda( \omega^\ast \ast \omega ) ), \hat{J} \hat{\Lambda}(\lambda(\omega^\ast \ast \omega)) \rangle \\
= &  \langle  a \hat{\nabla}^{1/2} \hat{\Lambda}( \lambda( \omega^\ast \ast \omega ) ), \hat{\nabla}^{1/2} \hat{\Lambda}(\lambda(\omega^\ast \ast \omega)) \rangle \\
= &  \langle  a   \hat{\Lambda}( \lambda( (\omega^\ast \ast \omega)_{[-i/2]} ) ), \hat{\Lambda}(\lambda( (\omega^\ast \ast \omega)_{[-i/2]} )) \rangle, 
\end{split}
\]
which concludes our lemma.
\end{proof}

\begin{lem}\label{Lem=Modular}
Let $\omega \in L^1(\bG)^\flat$ and let $b \in \MCB(\bG)$ be a modular multiplier. We have $b \cdot \omega  \in \cI$ and $\lambda(b \cdot \omega )$ is analytic for $\hat{\sigma}$.  For every $z \in \mathbb{C}$, we have $\hat{\sigma}_z(\lambda(b\cdot \omega)) = \lambda(b \cdot \omega_{[z]})$.
\end{lem}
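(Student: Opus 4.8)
The plan is to reduce everything to the level of the GNS-vectors $\hat{\Lambda}(\lambda(\cdot))$, exploiting the identity $\hat{\Lambda}(\lambda(\omega)) = \xi(\omega)$ for $\omega \in \cI$ together with the fact that $\hat{\nabla}^{it}$ simultaneously implements $\hat{\sigma}_t$ on $L^\infty(\hat{\bG})$ and, through the table relation $\tau_t(x) = \hat{\nabla}^{it} x \hat{\nabla}^{-it}$, implements $\tau_t$ on $L^\infty(\bG)$. The argument splits into three steps: first the membership $b\cdot\omega \in \cI$ with the vector identity $\hat{\Lambda}(\lambda(b\cdot\omega)) = b\,\hat{\Lambda}(\lambda(\omega))$; then the shift formula for real parameter; finally an analytic continuation in $z$. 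Note that, since $\MCB(\bG) \subseteq L^\infty(\bG)$ by \eqref{Eqn=MultiplierLinftyNorm}, the only properties of $b$ I will actually use are its boundedness and its scaling invariance $\tau_t(b)=b$.

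For the first step, $\omega \in L^1(\bG)^\flat \subseteq \cI$, so $\omega(x^\ast) = \langle \xi(\omega), \Lambda(x)\rangle$ for $x \in L^2(\bG) \cap L^\infty(\bG)$. I would write $(b\cdot\omega)(x^\ast) = \omega((b^\ast x)^\ast)$ and use the left-module property $\Lambda(b^\ast x) = b^\ast \Lambda(x)$ of the GNS-map (legitimate since $L^2(\bG)\cap L^\infty(\bG)$ is a left ideal) to obtain $(b\cdot\omega)(x^\ast) = \langle b\,\xi(\omega), \Lambda(x)\rangle$. As $b$ is bounded, $\Lambda(x) \mapsto (b\cdot\omega)(x^\ast)$ is implemented by the vector $b\,\xi(\omega)$, whence $b\cdot\omega \in \cI$ with $\hat{\Lambda}(\lambda(b\cdot\omega)) = \xi(b\cdot\omega) = b\,\xi(\omega) = b\,\hat{\Lambda}(\lambda(\omega))$. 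This step uses neither modularity nor the multiplier property, and it applies verbatim to $\omega_{[t]}$, which lies in $\cI$ because $\lambda(\omega_{[t]}) = \hat{\sigma}_t(\lambda(\omega))$ stays in $\mathrm{Dom}(\hat{\Lambda})$.

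For the second step, fix $t \in \mathbb{R}$ and compute at the vector level, using the standard modular identity $\hat{\Lambda}(\hat{\sigma}_t(y)) = \hat{\nabla}^{it}\hat{\Lambda}(y)$, the first step, the commutation $\hat{\nabla}^{it} b = b\,\hat{\nabla}^{it}$ (which is precisely the modularity $\tau_t(b) = \hat{\nabla}^{it} b \hat{\nabla}^{-it} = b$), the defining relation $\hat{\sigma}_t(\lambda(\omega)) = \lambda(\omega_{[t]})$, and the first step once more applied to $\omega_{[t]}$:
\[
\hat{\Lambda}(\hat{\sigma}_t(\lambda(b\cdot\omega))) = \hat{\nabla}^{it} b\, \hat{\Lambda}(\lambda(\omega)) = b\, \hat{\nabla}^{it}\hat{\Lambda}(\lambda(\omega)) = b\, \hat{\Lambda}(\lambda(\omega_{[t]})) = \hat{\Lambda}(\lambda(b\cdot\omega_{[t]})).
\]
Since $\hat{\Lambda}$ is injective this gives $\hat{\sigma}_t(\lambda(b\cdot\omega)) = \lambda(b\cdot\omega_{[t]})$ for all real $t$. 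This is where scaling invariance of $b$ is essential, and I expect it to be the main obstacle: not conceptually, but in the careful bookkeeping of the modular-theory identities and of the domains (ensuring each $\hat{\Lambda}$ and each $\hat{\nabla}^{it}\hat{\Lambda}$ is defined). The key observation making it work is translating $\tau_t(b)=b$ into commutation with $\hat{\nabla}^{it}$.

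Finally, for the continuation in $z$, I would invoke Lemma \ref{Lem=TomitaTransform}, by which $z \mapsto \omega_{[z]}$ is norm-analytic into $L^1(\bG)$; composing with the bounded linear maps $\omega \mapsto b\cdot\omega$ and $\lambda$ shows that $F(z) := \lambda(b\cdot\omega_{[z]})$ is an entire $L^\infty(\hat{\bG})$-valued function (analytic in norm). By the second step $F(t) = \hat{\sigma}_t(\lambda(b\cdot\omega))$ for every real $t$, so $F$ is an entire extension of $t \mapsto \hat{\sigma}_t(\lambda(b\cdot\omega))$. By the definition of analyticity for $\hat{\sigma}$ together with uniqueness of analytic continuation, $\lambda(b\cdot\omega)$ is analytic for $\hat{\sigma}$ and $\hat{\sigma}_z(\lambda(b\cdot\omega)) = F(z) = \lambda(b\cdot\omega_{[z]})$ for all $z \in \mathbb{C}$. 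The elegant point is that only the real-parameter identity $\tau_t(b)=b$ is needed, the complex values being reached purely by analytic continuation.
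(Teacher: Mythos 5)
Your proof is correct, and it reaches the real-parameter identity by a genuinely different mechanism than the paper, even though the overall architecture (real $t$ first, then analytic continuation via Lemma \ref{Lem=TomitaTransform}) is the same. The paper stays entirely on the algebra/predual side: it invokes Lemma \ref{Lem=ModularAppendix} to write $\theta_{[t]}(x)=\theta(\delta^{it}\tau_{-t}(x))$ and then pushes $b$ through using $\delta^{it}\tau_{-t}(x)\,b=\delta^{it}\tau_{-t}(xb)$, which is where $\tau_{-t}(b)=b$ enters; membership of $b\cdot\omega$ in $\cI$ is simply quoted from \cite[Result 8.6]{KusVae}. You instead descend to the GNS Hilbert space: you rederive $\xi(b\cdot\omega)=b\,\xi(\omega)$ from the left-ideal property of the domain of $\Lambda$, and convert modularity into the spatial commutation $\hat{\nabla}^{it}b=b\hat{\nabla}^{it}$ via the table relation $\tau_t(x)=\hat{\nabla}^{it}x\hat{\nabla}^{-it}$, closing the argument with injectivity of $\hat{\Lambda}$. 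What your route buys is that it bypasses the modular element $\delta$ and Lemma \ref{Lem=ModularAppendix} altogether and makes the role of $\tau_t(b)=b$ completely transparent as a commutation with $\hat{\nabla}^{it}$; the small price is that you must track domains of $\hat{\Lambda}$ (that $\hat{\sigma}_t$ preserves them and that $\omega_{[t]}\in\cI$, which you correctly justify via $\lambda(\omega_{[t]})=\hat{\sigma}_t(\lambda(\omega))$ and, implicitly, Lemma \ref{Lem=L2Thingy} or the fact that $\omega_{[t]}\in L^1(\bG)^\natural$ from Lemma \ref{Lem=TomitaTransform}). Your closing observation that only boundedness and scaling invariance of $b$ are used, not the multiplier property, is accurate and matches the paper. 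The final analytic-continuation step is identical in both arguments.
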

\begin{proof}
By Lemma \ref{Lem=ModularAppendix} we find that we must have   $\theta_{[t]}(x) = \theta(\delta^{it} \tau_{-t}( x ))$ for every $t \in \mathbb{R}$ and $\theta \in L^1(\bG)^\flat$. Applying this to $\theta = b \cdot \omega$, this means that 
\[
\begin{split}
& \langle x, \theta_{[t]} \rangle = \langle  \delta^{it} \tau_{-t}( x ), \theta \rangle = 
 \langle  \delta^{it} \tau_{-t}( x ), b \cdot \omega \rangle =
 \langle  \delta^{it} \tau_{-t}( x ) b,  \omega \rangle \\ = &
\langle  \delta^{it} \tau_{-t}( x b),  \omega \rangle =
\langle xb, \omega_{[t]} \rangle = 
\langle x, b \cdot \omega_{[t]} \rangle. 
\end{split}
\]
So that 
\begin{equation}\label{Eqn=AnalEqn}
\hat{\sigma}_t(\lambda(b\cdot \omega)) = \hat{\sigma}_t(\lambda(\theta)) = \lambda(\theta_{[t]}) = \lambda(b \cdot \omega_{[t]}).
\end{equation}  
Since, $t \mapsto \omega_{[t]}$ extends analytically to $\mathbb{C} \rightarrow L^1(\bG)$ it follows that  \eqref{Eqn=AnalEqn} is analytic on $\mathbb{C}$. This proves the lemma, since $\mathcal{I}$ is a left $L^\infty(\bG)$-module \cite[Result 8.6]{KusVae}.  
\end{proof}

\begin{lem}\label{Lem=L1CohenTypeLemma}
Let $b_k \in \MCB(\bG)$ be a bounded sequence of multipliers such that for every $c \in C_0(\bG)$ we have $\Vert b_k c - c \Vert_{L^\infty(\bG)} \rightarrow 0$. For every $\omega \in L^1(\bG)$, we have $\Vert b_k \cdot \omega - \omega \Vert_{L^1(\bG)} \rightarrow 0$. 
\end{lem}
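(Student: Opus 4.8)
The plan is to recognise that the asserted $L^1$-convergence is really a statement about strong operator convergence $b_k \to 1$ on $L^2(\bG)$, and to extract the latter from the C$^\ast$-algebraic hypothesis on $C_0(\bG)$; this is the approximate-identity (Cohen-type) mechanism suggested by the name of the lemma. First I would record the uniform bound: by \eqref{Eqn=MultiplierLinftyNorm} we have $\Vert b_k \Vert_{L^\infty(\bG)} \leq \Vert b_k \Vert_{\MCB(\bG)}$, so $C := \sup_k \Vert b_k \Vert_{L^\infty(\bG)} < \infty$ and each $b_k$ acts on $L^2(\bG)$ with operator norm at most $C$.

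Next I would reduce to a Hilbert-space statement. Since the GNS-representation of the left Haar weight puts $L^\infty(\bG)$ in standard form on $L^2(\bG)$, every $\omega \in L^1(\bG)$ is a vector functional, $\omega = \omega_{\xi,\eta}$ for suitable $\xi,\eta \in L^2(\bG)$. For such $\omega$ and any $x \in L^\infty(\bG)$,
\[
(b_k \cdot \omega - \omega)(x) = \omega(x b_k - x) = \langle x(b_k - 1)\xi, \eta \rangle = \omega_{(b_k - 1)\xi,\, \eta}(x),
\]
so that $\Vert b_k \cdot \omega - \omega \Vert_{L^1(\bG)} \leq \Vert (b_k - 1)\xi \Vert_{L^2(\bG)}\,\Vert \eta \Vert_{L^2(\bG)}$. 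Hence it suffices to prove $b_k\xi \to \xi$ in $L^2(\bG)$ for every $\xi \in L^2(\bG)$.

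To prove this strong convergence I would first handle vectors of the form $\xi = c\zeta$ with $c \in C_0(\bG)$ and $\zeta \in L^2(\bG)$: here $(b_k-1)\xi = (b_kc - c)\zeta$, so $\Vert (b_k - 1)\xi \Vert \leq \Vert b_k c - c \Vert_{L^\infty(\bG)}\,\Vert \zeta \Vert \to 0$ by hypothesis. The span of such vectors is dense because $C_0(\bG)$ acts non-degenerately on $L^2(\bG)$: the orthogonal projection $p$ onto $\overline{C_0(\bG)L^2(\bG)}$ satisfies $pa = a = ap$ for all $a \in C_0(\bG)$ (using that $C_0(\bG)$ is a $\ast$-algebra), so $1-p \in L^\infty(\bG)'$ annihilates the $\sigma$-weakly dense subalgebra $C_0(\bG)$ of $L^\infty(\bG)$, forcing $1-p = 0$. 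Combining this density with the uniform bound $C$ through a routine $3\epsilon$-argument then gives $b_k\xi \to \xi$ for all $\xi$, which completes the proof via the reduction above.

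The main obstacle is precisely the passage from the hypothesis, phrased on $C_0(\bG)$, to convergence on the whole of $L^2(\bG)$; the two ingredients that make this go through are that $L^\infty(\bG)$ is in standard form (so $L^1(\bG)$ consists of vector functionals) and that $C_0(\bG)$ is non-degenerate on $L^2(\bG)$. Both are standard facts of the Kustermans--Vaes framework, so once they are in place the argument is essentially a density-plus-boundedness estimate with no further genuine difficulty.
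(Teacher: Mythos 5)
Your proof is correct and follows essentially the same route as the paper: reduce to vector functionals via the standard form and estimate $\Vert b_k\cdot\omega_{c\xi,\eta}-\omega_{c\xi,\eta}\Vert_{L^1(\bG)}=\Vert\omega_{b_kc\xi,\eta}-\omega_{c\xi,\eta}\Vert_{L^1(\bG)}\leq\Vert b_kc-c\Vert_{L^\infty(\bG)}\Vert\xi\Vert_{L^2(\bG)}\Vert\eta\Vert_{L^2(\bG)}$. The only divergence is in the last step: the paper invokes Cohen factorization to write an arbitrary $\xi\in L^2(\bG)$ exactly as $c\xi'$ with $c\in C_0(\bG)$ (so the uniform bound on $\Vert b_k\Vert_{L^\infty(\bG)}$ is not needed there), whereas you use non-degeneracy of $C_0(\bG)$ on $L^2(\bG)$ to get density of such vectors and then combine it with $\sup_k\Vert b_k\Vert_{L^\infty(\bG)}<\infty$ in a $3\epsilon$-argument; both work, and yours is the slightly more elementary variant.
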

\begin{proof}
Let $\xi, \eta \in L^2(\bG)$ and let $c \in C_0(\bG)$. Then, 
\[
\begin{split}
\Vert b_k \cdot \omega_{  c \xi, \eta} - \omega_{ c \xi, \eta} \Vert_{L^1(\bG)} = & \Vert   \omega_{b_k c \xi, \eta} - \omega_{c \xi,  \eta} \Vert_{L^1(\bG)}\\
 \leq & \Vert \xi \Vert_{L^2(\bG)} \Vert \eta \Vert_{L^2(\bG)} \Vert b_k c - c \Vert_{L^\infty(\bG)}  \rightarrow 0.
\end{split}
\]
Let $\xi, \eta \in L^2(\bG)$ again be arbitrary. By Cohen factorization, there exists a  $c \in C_0(\bG)$ and $\xi' \in L^2(\bG)$ such that $\xi = c \xi'$ with $\Vert c \Vert \leq 1$. The lemma then follows from the previous computation. 

\end{proof}

Let $x,y \in L^2(\bG) \cap L^\infty(\bG)$. It is proved in \cite[Proposition 4]{Terp} that there exists a unique functional $\varphi_{y^\ast x } \in L^1(\bG)$ such that  for every $e,f \in \cT_{\hat{\varphi}}$ we have,
\begin{equation}\label{Eqn=MagicFunctional}
\varphi_{y^\ast x }(e^\ast f) = \langle J x^\ast y J \Lambda(f), \Lambda(e) \rangle = \langle J f^\ast e J \Lambda(x), \Lambda(y) \rangle.
\end{equation}
We now prove the following technical lemma. It plays a crucial role in Theorem \ref{Thm=CompactaApproximation}. Remark \ref{Rmk=TechnicalNature} explains its nature. 

\begin{lem}\label{Lem=Technical}
Let $b_k \in \MCB(\bG)$ be a bounded sequence of positive modular multipliers such that for every $c \in C_0(\bG)$ we have $\Vert b_k c - c \Vert_{L^\infty(\bG)} \rightarrow 0$. Let  $\omega \in L^1(\bG)^{\flat}$ and set $d = \lambda(\omega)$. In particular, $d \in \cT_{\hat{\varphi}}$ and $d\hat{\varphi}d^\ast \in L^1(\hat{\bG})$ is well defined.  Let $\hat{\theta}_k \in L^1(\hat{\bG})$ be such that $\hat{\lambda}(\hat{\theta}_k) = b_k \hat{\lambda}(d \hat{\varphi} d^\ast)$. Then, for every $k$, there exists an operator $d_k \in \cT_{\hat{\varphi}}$ such that 
\begin{equation}\label{Eqn=TechnicalConvergence}
\Vert d_k \hat{\varphi} d_k^\ast - \hat{\theta}_k \Vert_{L^1(\hat{\bG})} < \frac{1}{k},
\end{equation}
and moreover, $\hat{\sigma}_{i/2}(d_k) \hat{\sigma}_{i/2}(d_k)^\ast$ is bounded in $L^\infty(\bG)$ and  $\hat{\sigma}_{i/2}(d_k) \in L^\infty(\bG)^+$. 
\end{lem}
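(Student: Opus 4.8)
The plan is to realise $\hat\theta_k$ as (a small perturbation of) a functional $d_k\hat\varphi d_k^*$ whose modular datum $\hat\sigma_{i/2}(d_k)$ is the positive square root of an operator manufactured from $b_k$ by a completely bounded, modular-equivariant map; the uniform bound is then harvested from $\sup_k\Vert b_k\Vert_{\MCB(\bG)}$. First I would unwind $\hat\theta_k$. Writing $\Phi_{b_k}$ for the normal, positive map on $L^\infty(\hat\bG)$ dual to multiplication by $b_k$ (as in the Remark following the definition of modular multipliers), the defining relation $\hat\lambda(\hat\theta_k)=b_k\hat\lambda(d\hat\varphi d^*)$ dualises to
\[
\hat\theta_k(x)=(d\hat\varphi d^*)(\Phi_{b_k}(x))=\hat\varphi(d^*\Phi_{b_k}(x)d),\qquad x\in L^\infty(\hat\bG),
\]
so that $\hat\theta_k$ is a positive normal functional. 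I would also record the bounded positive operator $g:=\hat\sigma_{i/2}(d)=\lambda(\omega_{[i/2]})\in L^\infty(\hat\bG)$, which is well defined and of $k$-independent norm because $\omega\in L^1(\bG)^\flat$ and $d\in\cT_{\hat\varphi}$.

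The guiding heuristic — exact when $\hat\varphi$ is a trace — is that $d\hat\varphi d^*$ has density $gg^*$ with respect to $\hat\varphi$, whence $\hat\theta_k$ has density $\Phi_{b_k}^\dagger(gg^*)$, where $\Phi_{b_k}^\dagger$ is the adjoint of $\Phi_{b_k}$ for the weight $\hat\varphi$. Here the hypothesis that $b_k$ is \emph{modular} is essential: by the Remark, $\Phi_{b_k}$ commutes with $\hat\sigma$, so this KMS-adjoint exists as a normal positive map, and positivity of $b_k$ makes it positive, while complete boundedness gives $\Vert\Phi_{b_k}^\dagger\Vert\le\Vert\Phi_{b_k}\Vert_{cb}\le\Vert b_k\Vert_{\MCB(\bG)}$. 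Consequently $\rho_k:=\Phi_{b_k}^\dagger(gg^*)$ is a positive operator with
\[
\Vert\rho_k\Vert_{L^\infty(\hat\bG)}\le\Vert b_k\Vert_{\MCB(\bG)}\,\Vert g\Vert^2,
\]
a bound uniform in $k$. Taking, ideally, $\hat\sigma_{i/2}(d_k)=\rho_k^{1/2}$ would yield simultaneously the positivity of $\hat\sigma_{i/2}(d_k)$ and the required uniform estimate $\Vert\hat\sigma_{i/2}(d_k)\hat\sigma_{i/2}(d_k)^*\Vert=\Vert\rho_k\Vert\le\Vert b_k\Vert_{\MCB(\bG)}\Vert g\Vert^2$.

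To make this rigorous I would first identify the density $\rho_k$ honestly in the non-tracial setting: using the Terp functionals \eqref{Eqn=MagicFunctional} to express $d\hat\varphi d^*$ through $gg^*$, and Lemma \ref{Lem=Modular} to keep all modular continuations of $\lambda(b_k\cdot\omega)$ inside $\cT_{\hat\varphi}$, one matches $\hat\theta_k(e^*f)$ against the corresponding matrix coefficient of $\rho_k$ for $e,f\in\cT_{\hat\varphi}$. Since $\rho_k^{1/2}$ need not be analytic for $\hat\sigma$, I would then replace it by the modular smoothing
\[
H_k=\sqrt{n/\pi}\int e^{-nt^2}\hat\sigma_t(\rho_k^{1/2})\,dt,
\]
which is positive, analytic, satisfies $\Vert H_k\Vert\le\Vert\rho_k^{1/2}\Vert$, and converges strongly to $\rho_k^{1/2}$ as $n\to\infty$; choosing $n=n(k)$ large and setting $d_k=\hat\sigma_{-i/2}(H_k)$ gives $d_k\in\cT_{\hat\varphi}$ with $\hat\sigma_{i/2}(d_k)=H_k\ge 0$, the uniform $L^\infty$-bound preserved, and $\Vert d_k\hat\varphi d_k^*-\hat\theta_k\Vert_{L^1(\hat\bG)}<1/k$. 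The main obstacle is precisely this middle step: constructing the density of $\hat\theta_k$ relative to the non-tracial weight $\hat\varphi$ and proving it to be the bounded operator $\rho_k$ with the cb-norm estimate, all while simultaneously arranging positivity of $\hat\sigma_{i/2}(d_k)$ and norm closeness of the functionals. This is where both the modularity of $b_k$ (equivariance of $\Phi_{b_k}$ under $\hat\sigma$) and its complete boundedness are indispensable, and it is the quantum replacement for the Radon–Nikodym/Schur-multiplier argument of De Canni\`ere and Haagerup.
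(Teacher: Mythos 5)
Your skeleton (find a uniformly bounded positive density for $\hat{\theta}_k$, take its positive square root, Gaussian-smooth along $\hat{\sigma}$, then set $d_k = \hat{\sigma}_{-i/2}$ of the result) matches the paper's Claims 2 and 3, and that part of the plan is sound. The gap is exactly where you locate "the main obstacle," and it is not a technicality that can be deferred: you need the weight-adjoint $\Phi_{b_k}^\dagger$ of $\Phi_{b_k}$ with respect to $\hat{\varphi}$ to exist as a normal positive map that is \emph{bounded on $L^\infty(\hat{\bG})$} with $\Vert \Phi_{b_k}^\dagger \Vert \leq \Vert \Phi_{b_k}\Vert_{cb}$, and you assert this rather than prove it. The adjoint of an $L^\infty$-bounded map with respect to a non-tracial weight is naturally a bounded map on the predual ($L^1$-level); there is no general principle giving an $L^\infty$-bounded implementation, and modularity of $b_k$ (commutation of $\Phi_{b_k}$ with $\hat{\sigma}$) only removes the modular twisting from the adjoint relation — it does not by itself produce the $L^\infty$-bound. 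Since $\rho_k = \Phi_{b_k}^\dagger(gg^\ast)$ and its uniform bound are the entire content of the hard step, the proof as written is circular at its core.

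The paper's Claim 1 gets around precisely this point by never forming an adjoint map on all of $L^\infty(\hat{\bG})$. Instead it uses that the density $gg^\ast = \hat{\sigma}_{i/2}(d)\hat{\sigma}_{i/2}(d)^\ast$ is itself a Fourier transform: $gg^\ast = \lambda\bigl((\omega \ast \omega')_{[i/2]}\bigr)$ with $(\omega\ast\omega')_{[i/2]} \in L^1(\bG)$ — this is where the hypothesis $\omega \in L^1(\bG)^\flat$ earns its keep. One then multiplies by $b_k$ at the $L^1(\bG)$-level, where the relevant estimate is simply $\Vert b_k \cdot \theta \Vert_{L^1(\bG)} \leq \Vert b_k \Vert_{L^\infty(\bG)} \Vert \theta \Vert_{L^1(\bG)}$ (uniform in $k$ by \eqref{Eqn=MultiplierLinftyNorm}), and transforms back; contractivity of $\lambda: L^1(\bG) \to L^\infty(\hat{\bG})$ then gives the uniform $L^\infty$-bound on $x_k = \lambda\bigl(b_k\cdot(\omega\ast\omega')_{[i/2]}\bigr)$ for free. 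The identity $\hat{\theta}_k(e^\ast f) = \langle \hat{J} x_k^\ast \hat{J}\hat{\Lambda}(f), \hat{\Lambda}(e)\rangle$ is then verified by a direct computation using $\xi(b_k\cdot\theta) = b_k\xi(\theta)$ and the modularity of $b_k$ in the concrete form $\hat{\nabla}^{it} b_k \hat{\nabla}^{-it} = b_k$ (to move $b_k$ past $\hat{\nabla}^{1/2}$), not via any adjointability of $\Phi_{b_k}$. If you want to salvage your route, you would have to prove that $\Phi_{b_k}^\dagger$ restricted to $\lambda(L^1(\bG))$ agrees with $\theta \mapsto \lambda(b_k\cdot\theta)$ and is bounded there — at which point you have reproduced the paper's argument and the global $L^\infty$-boundedness of $\Phi_{b_k}^\dagger$ becomes both unproved and unnecessary.
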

\begin{proof}
Note that the key properties we need are boundedness of $\hat{\sigma}_{i/2}(d_k) \hat{\sigma}_{i/2}(d_k)^\ast$  and  positivity of $\hat{\sigma}_{i/2}(d_k) \in L^\infty(\bG)^+$. 
The following is the main point of our set-up.

\vspace{0.3cm}

\noindent {\bf Claim 1:} For every $k \in \mathbb{N}$, there exists a unique element $x_k \in L^\infty(\hat{\bG})$ such that for every $e,f \in \cT_{\hat{\varphi}}$ we have,
\begin{equation}\label{Eqn=XkFormula}
\hat{\theta}_k(e^\ast f) = \langle \hat{J} x_k^\ast \hat{J} \hat{\Lambda}(f), \hat{\Lambda}(e) \rangle.
\end{equation}
Moreover,   $x_k$ is positive and the sequence is bounded in $L^\infty(\hat{\bG})$. 

\vspace{0.3cm}

\noindent {\bf Proof of   claim 1:} If such $x_k$ exists, then it is unique by \eqref{Eqn=XkFormula}. Since $b_k$ is a positive multiplier, $\hat{\theta}_k$ is a positive functional. Then, $x_k$ is  positive, since for $e \in \cT_{\hat{\varphi}}$, 
\[
\langle  x_k  \hat{J} \hat{\Lambda}(e), \hat{J} \hat{\Lambda}(e) \rangle =\langle   \hat{J} x_k^\ast  \hat{J} \hat{\Lambda}(e),\hat{\Lambda}(e) \rangle= \hat{\theta}_k(e^\ast e) \geq 0. 
\]  
We use the notation $\omega' = (\omega^\ast)_{[-i]}$. We claim that we may take:
\[
x_k :=  
   \lambda\left( b_k \cdot\left( \left( \omega \ast \omega' \right)_{[i/2]}\right)\right).
\]
 It follows from Lemma \ref{Lem=L1CohenTypeLemma} that $b_k \cdot\left( \left( \omega \ast \omega' \right)_{[i/2]}\right)$ is a bounded sequence in $L^1(\bG)$ and hence $x_k$ is a bounded sequence in $L^\infty(\hat{\bG})$. In fact, $x_k \in L^2(\hat{\bG}) \cap L^\infty(\hat{\bG}) $ by Lemma \ref{Lem=Modular}. We claim that,
\begin{equation}\label{Eqn=Dualizing}
\xi( \omega \ast \omega') = \hat{\Lambda}( \lambda(\omega) \lambda(\omega') ) = \hat{\xi}( \lambda(\omega)  \hat{\varphi} \lambda(\omega) ^\ast) = \hat{\xi}( d \hat{\varphi} d^\ast).
\end{equation}
Indeed, the first and last equation follow by definition. The second equation follows from the fact that for $e\in \cT_{\hat{\varphi}}$ we have, 
\begin{equation}\label{Eqn=L2Check}
\begin{split}
&(\lambda(\omega)  \hat{\varphi} \lambda(\omega) ^\ast)(e^\ast) = \hat{\varphi}(\lambda(\omega) ^\ast e^\ast \lambda(\omega)) = \hat{\varphi}(e^\ast \lambda(\omega)\hat{\sigma}_{-i}(\lambda(\omega) ^\ast ))\\
 =& \hat{\varphi}(e^\ast \lambda(\omega)\lambda(\omega')) = \langle \hat{\Lambda}(\lambda(\omega)\lambda(\omega')), \hat{\Lambda}(e) \rangle.
\end{split}
\end{equation}
Using the fact that $\cT_{\hat{\varphi}}$ is a $\sigma$-weak/norm core for $\hat{\Lambda}$, this yields that \eqref{Eqn=L2Check} holds for all $e \in L^2(\hat{\bG}) \cap L^\infty(\hat{\bG})$, and hence \eqref{Eqn=Dualizing} follows. 

We now prove \eqref{Eqn=XkFormula}. Let $e, f \in \cT_{\hat{\varphi}}$. Then, explaining the equations below, we have
\[
\begin{split}
& \langle \hat{J} x_k^\ast \hat{J} \hat{\Lambda}(f), \hat{\Lambda}(e) \rangle \\
= & \langle x_k \hat{\Lambda}(\hat{\sigma}_{i/2}(e)^\ast ), \hat{\Lambda}( \hat{\sigma}_{i/2} (f)^\ast )\\
= & \hat{\varphi}(\hat{\sigma}_{i/2}(f) x_k \hat{\sigma}_{i/2}(e)^\ast ) \\
= & \hat{\varphi}(\hat{\sigma}_{i/2}(e^\ast f) x_k   )  \\
= & \langle \hat{\Lambda}(x_k), \hat{\Lambda}( \hat{\sigma}_{-i/2}(f^\ast e)  ) \rangle\\
= & \langle b_k \xi(\left( \omega \ast \omega' \right)_{[i/2]} ), \hat{\Lambda}(\hat{\sigma}_{-i/2}(f^\ast e))\rangle\\
= & \langle  b_k \hat{\nabla}^{-\frac{1}{2}} \xi( \omega \ast \omega' ),\hat{\nabla}^{\frac{1}{2}} \hat{\Lambda}(f^\ast e)\rangle\\
= & \langle  b_k \xi( \omega \ast \omega' ), \hat{\Lambda}(f^\ast e)\rangle\\
= & \langle b_k \hat{\xi}(d \hat{\varphi} d^\ast), \hat{\Lambda}(f^\ast e) \rangle \\
= & \langle \Lambda(b_k \hat{\lambda}(d\hat{\varphi} d^\ast) ), \hat{\Lambda}(f^\ast e) \rangle \\
= & \langle \Lambda(\hat{\lambda}(\hat{\theta}_k) ), \hat{\Lambda}(f^\ast e) \rangle \\
= & \langle \hat{\xi}(\theta_k), \hat{\Lambda}(f^\ast e) \rangle \\
= & \hat{\theta}_k(e^\ast f).
\end{split}
\]
The first four equations follow from Tomita-Takesaki theory; the fifth equation is the definition of $x_k$ and the  fact that $\xi(b_k\cdot \theta) = b_k \xi(\theta)$ for every $\theta \in \cI$ (\cite[Result 8.6]{KusVae}); the sixth equation is Tomita-Takesaki theory; the seventh equation is Tomita-Takesaki theory and the fact that since $b_k$ is modular, we have $\tau_t(b_k) = \hat{\nabla}^{it} b_k \hat{\nabla}^{-it} = b_k$ for $t \in \mathbb{R}$; the eight equation is \eqref{Eqn=Dualizing}; the remaing equations follow by the definitions of their objects.  Note that in the eleventh equation we have used Lemma \ref{Lem=L2Thingy}.
This proves the claim.

\vspace{0.3cm}

\noindent {\bf Claim 2:} There exists  a  sequence $y_k \in L^2(\hat{\bG}) \cap L^\infty(\hat{\bG})$ that is bounded with respect to the norm of $L^\infty(\hat{\bG})$ and such that $\Vert \hat{\varphi}_{y_{k}^\ast y_{k}} - \hat{\theta}_k \Vert_{L^1(\hat{\bG})} \rightarrow  0$. See \eqref{Eqn=MagicFunctional} for the definition of $\hat{\varphi}_{y_{k}^\ast y_{k}}$.  Moreover, we may take $y_k \in L^\infty(\hat{\bG})^+$. 

\vspace{0.3cm}

\noindent {\bf Proof of   claim 2:} Let $\{e_j\}$ be a net in $\cT_{\hat{\varphi}}$ such that $e_j \rightarrow 1$ in the $\sigma$-weakly topology and such that $\Vert \hat{\sigma}_z(e_j) \Vert_{L^\infty(\hat{\bG})} \leq e^{\Im(z)^2}$, see \cite[Lemma 9]{Terp}. For every $k$, we let $y_{k,j} \in L^2(\hat{\bG}) \cap L^\infty(\hat{\bG})$ be such that,
\begin{equation}\label{Eqn=PositiveEqn}
y_{k,j}^\ast y_{k,j} = \hat{\sigma}_{i/2}(e_j)  x_k   \hat{\sigma}_{i/2}(e_j)^\ast.
\end{equation}  
The proof of \cite[Theorem 8, p. 331-332]{Terp} yields that,
\[
\begin{split}
&\Vert y_{k,j}^\ast y_{k,j}   \Vert_{L^\infty(\hat{\bG})} <   e^{\frac{1}{2}} \Vert x_k \Vert_{L^\infty(\hat{\bG})}, \\
&\lim_{j \in J} \Vert \hat{\varphi}_{y_{k,j}^\ast y_{k,j}} - \hat{\theta}_k \Vert_{L^1(\hat{\bG})}= 0.
\end{split}
\]
For every $k$, we choose $j_k \in J$ such that $\Vert \hat{\varphi}_{y_{k,j_k}^\ast y_{k,j_k}} - \hat{\theta}_k \Vert_{L^1(\hat{\bG})} < \frac{1}{k}$. We then set  $y_k := y_{k, j_k}$. Note that we could have choosen $y_k$ to be positive in \eqref{Eqn=PositiveEqn} since $x_k$ was positive.

\vspace{0.3cm}

\noindent {\bf Claim 3:} There exists  a  sequence $z_k \in \cT_{\hat{\varphi}}$ that is bounded with respect to the norm of $L^\infty(\hat{\bG})$ and such that $\Vert \hat{\varphi}_{z_{k}^\ast z_{k}} - \hat{\theta}_k \Vert_{L^1(\hat{\bG})} \rightarrow  0$.  Morever, we may take $z_k \in L^\infty(\hat{\bG})^+$. 

\vspace{0.3cm}

\noindent {\bf Proof of   claim 3:} Let $y_k$ be as in {\bf Claim 2}. We assume that $y_k \in L^\infty(\hat{\bG})^+$.
We then define, using a $\sigma$-weak integral,
\[
z_{k,n} = \sqrt{\frac{n}{\pi}} \int_{-\infty} ^\infty e^{-nt^2} \hat{\sigma}_t(y_k) dt\quad \in L^\infty(\bG)^+,
\]
and by standard arguments (see for instance the proof of \cite[Lemma 9]{Terp}) we see that $z_{k,n} \in \cT_{\hat{\varphi}}$.   Moreover,
\[
\hat{\Lambda}(z_{k,n}) =  \sqrt{\frac{n}{\pi}} \int_{-\infty} ^\infty e^{-nt^2} \hat{\nabla}^{it} \hat{\Lambda}(y_k) dt,
\] 
where the integral is a Bochner integral in $L^2(\hat{\bG})$ and as $n \rightarrow \infty$ we find that $\hat{\Lambda}(z_{k,n}) \rightarrow \hat{\Lambda}(y_k)$ in  norm. Recalling that,
\[
\begin{split}
\hat{\varphi}_{z_{k,n}^\ast z_{k,n}}(e^\ast f) = & \langle \hat{J}  f^\ast e \hat{J}  \hat{\Lambda}(z_{k,n}),  \hat{\Lambda}(z_k) \rangle, \\
\hat{\varphi}_{y_{k}^\ast y_{k}}(e^\ast f) = & \langle \hat{J}  f^\ast e \hat{J}  \hat{\Lambda}(y_{k}),  \hat{\Lambda}(y_k) \rangle, \\
\end{split}
\]
this implies that taking the limit $n \rightarrow \infty$, we get $\Vert \hat{\varphi}_{z_{k,n}^\ast z_{k,n}} - \hat{\varphi}_{y_{k}^\ast y_{k}} \Vert_{L^1(\hat{\bG})} \rightarrow 0$. {\bf Claim 3} then follows from {\bf Claim 2}.

\vspace{0.3cm}

\noindent {\bf Proof of Lemma \ref{Lem=Technical}.} Let $z_k$ be as in {\bf Claim 3}. We assume that $z_k \in L^\infty(\hat{\bG})^+$. We set $d_k = \hat{\sigma}_{-i/2}(z_k)$. It follows from {\bf Claim 3} that $d_k$ satisfies the required boundedness and positivity properties in the statement of our lemma. That is, we require that $\hat{\sigma}_{i/2}(d_k) = z_k$ is bounded and positive, which follows from {\bf Claim 3}. Finally, for $e, f \in \cT_{\hat{\varphi}}$ we find,
\[
\begin{split}
(d_k \hat{\varphi}  d_k^\ast)(e^\ast f) = & \hat{\varphi}( d_k^\ast e^\ast f d_k  ) \\
= & \langle \hat{\Lambda}(fd_k), \hat{\Lambda}(ed_k) \rangle \\
= &  \langle \hat{J} \hat{\nabla}^{\frac{1}{2}}  d_k^\ast \hat{J} \hat{\nabla}^{\frac{1}{2}} \hat{\Lambda}(f), \hat{J} \hat{\nabla}^{\frac{1}{2}}  d_k^\ast \hat{J} \hat{\nabla}^{\frac{1}{2}}  \hat{\Lambda}(e) \rangle  \\ 
= & \langle \hat{J} \hat{\sigma}_{i/2}(d_k)  \hat{\sigma}_{i/2}(d_k)^\ast \hat{J}  \hat{\Lambda}(f), \hat{\Lambda}(e) \rangle \\
= &  \langle \hat{J} z_k z_k^\ast \hat{J}  \hat{\Lambda}(f), \hat{\Lambda}(e) \rangle
\end{split}
\]
Hence, we find that $d_k \hat{\varphi} d_k^\ast= \hat{\varphi}_{z_{k} z_{k}^\ast} = \hat{\varphi}_{z_{k}^\ast z_{k}}$. Then, \eqref{Eqn=TechnicalConvergence} follows from {\bf Claim 3}. 
\end{proof}
\begin{rmk}
Theorem \ref{Thm=CompactaApproximation} and Lemma \ref{Lem=Technical} are true if sequences are replaced by nets as it directly follows from the proofs. 
\end{rmk}
\begin{rmk}\label{Rmk=TechnicalNature}
Since the statement and the proof of Lemma \ref{Lem=Technical} are technical in nature, it is useful to comment on its origin.

Suppose that $\bG$ comes from an abelian group $G$. For simplicity, suppose that $\omega \in L^1(G)$ is a compactly supported function. Let $d = \mathcal{F}(\omega)$ be its Fourier transform. In this case $d \hat{\varphi} d^\ast$ corresponds to a function $f \in L^1(\hat{G})$. 

The proof of Lemma \ref{Lem=Technical}, Claim 1 proceeds as follows. Take the Fourier transform $\hat{\mathcal{F}}(f) \in L^\infty(G)$ of $f$. One shows that in fact, $\hat{\mathcal{F}}(f) \in L^1(G)$. Then we multiply $\hat{\mathcal{F}}(f)$  with the multiplier $b_k \in L^\infty(G)$. Next, we take the dual Fourier transform, resulting in the function $(\mathcal{F} \circ b_k \circ \hat{\mathcal{F}})(f) \in L^\infty(\hat{G})$. Because of our choices, $b_k \hat{\mathcal{F}}(f)$ turns out to be a bounded sequence of functions in $L^1(G)$ and henceforth $(\mathcal{F} \circ b_k \circ \hat{\mathcal{F}})(f)\: (= x_k \textrm{ of Claim 1})$ is a bounded sequence in $L^\infty(\hat{G})$. Claims 2 and 3 are then standard approximation methods. 

Claim 1 of the proof of Lemma \ref{Lem=Technical} is proved in exactly the way described above. Note that in the process we used that $\hat{\mathcal{F}}(f) \in L^\infty(G)$ is in fact in $L^1(G)$. For quantum groups the intersection of $L^\infty(\bG)$ and $L^1(\bG)$ has a proper interpretation in terms of compatible couples of non-commutative $L^p$-spaces, see \cite[Section 3]{CasLpf}. We use these ideas implicitly while passing from $L^\infty(\bG)$ to $L^1(\bG)$
\end{rmk}

\begin{rmk}
Let us make the following heuristic comment on why we need the modular condition on the multipliers $b_k$. 
We use the language of compatible couples of non-commutative $L^p$-space for which we refer to \cite{Kos}.

 Along the proof of Lemma \ref{Lem=Technical}, we use a transition between the left injection and the symmetric injection of non-commutative $L^p$-spaces. It was shown in \cite[Theorem 7.1]{CasLpf} that Fourier transforms only exist for the left injection. However, in the proof of Theorem \ref{Thm=CompactaApproximation} we need to work with an injection that has the property that an $x \in L^1(\bG) \cap L^\infty(\bG)$ is positive in $L^1(\bG)$ if and only if it is positive in $L^\infty(\bG)$. The left injection does not have this property, but the symmetric injection does. The transition between the left and symmetric injection causes that we need the modular condition on our multipliers $b_k$. 

We do not know if the modular assumption on $b_k$ is strictly necessary. However, in our example this condition is easy to check. We expect that in similar examples for which Theorem \ref{Thm=CompactaApproximation} is applicable this will not be different.  
\end{rmk}

\section{Basic hypergeometric series and $\SUone$}\label{Sect=SUone}

This section recalls the preliminaries on basic hypergeometric series   \cite{GasRah} and the definition of the extended quantum $SU(1,1)$ group \cite{KoeKus}, \cite{GroKoeKus}. Though that some of the definitions below can be extended for other $q$, we always assume that $0 < q < 1$.  For $k \in \mathbb{N} \cup \{0,\infty\}$, we set the $q$-factorial,
\[
(a; q)_k = \prod_{l=0}^{k-1} (1-aq^l), \qquad (a_0, \ldots, a_n; q)_k = (a_0;q)_k \cdot \ldots \cdot (a_n;q)_k.
\]
We need the following {\it $\theta$-product identity}. 
For $a \in \mathbb{C} \backslash \{ 0 \}, k \in \mathbb{Z}$, 
\begin{equation}\label{EqnIntThePro}
(aq^k, q^{1-k}/a ; q)_\infty = (-a)^{-k} q^{-\frac{1}{2}k(k-1)} (a, q/a;q)_\infty,
\end{equation}
Recall that the basic hypergeometric $_2 \! \varphi_1$-function is expressed by:
\[
_2 \! \varphi_1\left( \begin{array}{c} a, b \\ c \end{array};q, z \right) = \prod_{k=0}^\infty \frac{(a,b;q)_k}{(c, q;q)_k} z^{k}.
\] 
We use the following notation, following \cite{GroKoeKus}:

\vspace{0.3cm}

\begin{tabular}{ll}
  $\mu: \mathbb{C} \backslash \{ 0 \} \rightarrow \mathbb{C} \backslash \{ 0 \}: y \mapsto \frac{1}{2} (y + y^{-1})$ ,    &
   $\chi: -q^{\mathbb{Z}} \cup q^{\mathbb{Z}}: p \mapsto ^q\!\!\log(\vert p\vert)$, 	 \\
   $\nu: -q^{\mathbb{Z}} \cup q^{\mathbb{Z}} \rightarrow \mathbb{R}: t \mapsto q^{\frac{1}{2}(\chi(t) - 1)(\chi(t)-2) }$, & $\kappa: \mathbb{R} \rightarrow \mathbb{R}: x \mapsto \sgn(x) x^2$, \\
$c_q = (\sqrt{2} q (q^2, -q^2; q^2)_\infty)^{-1}$.
\end{tabular} 

\vspace{0.3cm}

\noindent Furthermore, we set $I_q = q^{\mathbb{Z}} \cup -q^{\mathbb{N}}$ and anticipating to the definition of $\SUone$, we set 
\[
L^2(\bG) = L^2(\mathbb{Z}) \otimes L^2(I_q) \otimes L^2(I_q),
\]
where each tensor component is understood with respect to the counting measure. It has a canonical basis $f_{m,p,t}$, with $m \in \mathbb{Z}, p,t \in I_q$.

The quantum group $\SUone$ was established in the Kustermans-Vaes setting by Koelink and Kustermans \cite{KoeKus}. Its Plancherel decomposition was obtained by Koelink, Kustermans and Groenevelt in \cite{GroKoeKus}. It is worth mentioning that $\SUone$ is constructed by first defining its multiplicative unitary and then reconstructing a von Neumann algebraic quantum group.

Theorem-Definition \ref{Dfn=DfnSUone} defines $\SUone$. Its complete definition is rather involved, while in our proofs, we only use the Plancherel decomposition of its multiplicative unitary. Therefore, we  {\it define} $\SUone$ by  means of this decomposition. \eqref{Item=Principal} defines the principal series corepresentations, while \eqref{Item=Discrete} defines the discrete series corepresentations. The only fact we use about the discrete series is that their matrix coefficients are analytic extensions of the matrix coefficients of the principal series. This fact can be derived from a comparison of the actions of the generators of the universal enveloping Lie algebra \cite[Lemma 10.1 and Eqn. (92)]{GroKoeKus} on the representation spaces. 

Theorem-Definition \ref{Dfn=DfnSUone} uses the $C$-function defined in \cite[Lemma 9.4]{GroKoeKus}. In our proofs we compute special values of this functions and give further references. 

For direct integrals \cite{DixVna} is the standard reference.

\begin{thmdfn}\label{Dfn=DfnSUone}
Let $0 < q < 1$. Then, we define $\bG = \SUone$ as follows. 
\begin{enumerate}
\item\label{Item=Principal} For every $x \in [-1, 1], p \in q^{\mathbb{Z}}$, let 
\begin{equation}\label{Eqn=HSpace}
\cH_{p,x} = \spa\{  e^{\ep, \eta}_m(p,x) \mid  \ep, \eta \in \{+, -\}, m \in \textrm{$\frac{1}{2}$} \mathbb{Z}\}.
\end{equation}
where the vectors $e^{\ep, \eta}_m(p,x)$ are by definition orthonormal. 
There exists a unitary operator $W_{p,x} \in B(L^2(\bG) \otimes \cH_{p,x })$ determined by,
\begin{equation}\label{Eqn-CCoefficient}
\begin{split}
&(\id \otimes \omega_{e^{\ep,\eta}_m(p,x), e^{\ep',\eta'}_{m'}(p,x)})\left(W_{p,x} \right) f_{m_0, p_0, t_0}   \\
= &    C(\eta\ep x;m',\ep',\eta';\ep\ep'\vert p_0\vert
p^{-1}q^{-m-m'},p_0,m-m') \\
& \times \quad \delta_{sgn(p_0),\eta\eta'} f_{m_0 - m +
m',\ep\ep' \vert p_0\vert p^{-1} q^{-m-m'} , t_0},
\end{split}
\end{equation}
where the $C$-function is given in terms of basic hypergeometric series in \cite[Lemma 9.4]{GroKoeKus}. 
\item\label{Item=Discrete} For every $p \in q^{\mathbb{Z}}$, let the countable discrete set $\sigma_d(\Omega_p)$ be the discrete spectrum of the Casimir operator  \cite[Definition 4.5, Theorem 4.6]{GroKoeKus}  restricted to the space defined in \cite[Theorem 5.7]{GroKoeKus}. Let $\cH_{p,x}$ be the Hilbert space spanned by the orthonormal vectors $e^{\ep, \eta}_m(p,x)$ defined in \cite[Proposition 5.2]{GroKoeKus} (the notation is $\mathcal{L}_{p,x}$ instead of $\cH_{p,x}$ here and the span  of these vectors forms a subspace of \eqref{Eqn=HSpace}). There exists a unitary operator $W_{p,x} \in B(L^2(\bG)  \otimes \cH_{p,x })$ determined by  \eqref{Eqn-CCoefficient}.
\item  There exists a unique locally compact quantum group $\SUone$  with multiplicative unitary $W \in B(L^2(\bG)  \otimes L^2(\bG) )$ that is explicitly given in terms of the direct integral decomposition,
\begin{equation}\label{Eqn=Plancherel}
W = \bigoplus_{p \in q^\mathbb{Z}} \left( \int^\oplus_{[-1,1]} W_{p,x} \oplus \bigoplus_{x \in \sigma_d(\Omega_p)} W_{p,x} \right).
\end{equation}
\end{enumerate}
\end{thmdfn}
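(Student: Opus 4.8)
The plan is not to construct $\SUone$ \emph{ex nihilo} but to import the existence of the quantum group from Koelink--Kustermans \cite{KoeKus} and the explicit form of its multiplicative unitary from Groenevelt--Koelink--Kustermans \cite{GroKoeKus}, and then to obtain uniqueness from the general reconstruction principle of Kustermans--Vaes recalled in Section~\ref{Sect=Lcqg}. Concretely, \cite{KoeKus} produces (via a manageable multiplicative unitary, then a von Neumann algebraic reconstruction) a locally compact quantum group whose left multiplicative unitary $W \in B(L^2(\bG) \otimes L^2(\bG))$ is fixed, and in which all associated data $\left( L^\infty(\bG), \Delta, \varphi, \psi, \hat{\bG} \right)$ are encoded in $W$. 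I would first recall this, so that the only thing remaining to justify is that this $W$ is the operator written in \eqref{Eqn=Plancherel}.

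For the decomposition I would invoke the Plancherel analysis of \cite{GroKoeKus}: there the multiplicative unitary is diagonalised simultaneously with the Casimir $\Omega_p$, yielding for each $p \in q^{\mathbb{Z}}$ a continuous (principal) part indexed by $x \in [-1,1]$ and a discrete part indexed by $x \in \sigma_d(\Omega_p)$, the corepresentation acting on the fibre spaces $\cH_{p,x}$ of \eqref{Eqn=HSpace}. The matrix coefficients of the fibre unitaries are precisely the values of the $C$-function of \cite[Lemma 9.4]{GroKoeKus}, that is, exactly \eqref{Eqn-CCoefficient}. The work here is bookkeeping: match the orthonormal bases $e^{\ep,\eta}_m(p,x)$ and $f_{m,p,t}$ and their index conventions to those of \cite{GroKoeKus}, and confirm that the family $\{W_{p,x}\}$ is a measurable field of unitary corepresentations with respect to the Plancherel measure, so that the direct integral and sum in \eqref{Eqn=Plancherel} assemble to a well-defined unitary equal to $W$. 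For the discrete fibres I would use the stated fact that their matrix coefficients are the analytic continuations of the principal-series coefficients, obtained by comparing the actions of the enveloping-algebra generators as in \cite[Lemma 10.1 and Eqn. (92)]{GroKoeKus}, so that no formula beyond \eqref{Eqn-CCoefficient} is needed.

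Uniqueness I would deduce from the Kustermans--Vaes theory recalled in Section~\ref{Sect=Lcqg}: a locally compact quantum group is determined by its left multiplicative unitary, since $L^\infty(\bG)$ is the $\sigma$-strong-$\ast$ closure of $\{(\id \otimes \omega)(W) \mid \omega \in B(L^2(\bG))_\ast\}$, the comultiplication is $\Delta(x) = W^\ast(1 \otimes x)W$, and the dual is read off from $\hat{W} = \Sigma W^\ast \Sigma$; the Haar weights are then the unique (up to scaling) invariant normal semi-finite faithful weights. Hence any two locally compact quantum groups realising \eqref{Eqn=Plancherel} share the same von Neumann algebra, comultiplication and dual, and therefore coincide, so $\SUone$ is well defined by this data.

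The main obstacle is neither the abstract existence nor the uniqueness, both of which are cited, but the identification step: verifying that the operator assembled fibrewise from \eqref{Eqn-CCoefficient} really is \emph{the} multiplicative unitary of the quantum group of \cite{KoeKus}, with the correct Plancherel measure on the continuous part and the correct multiplicities on the discrete part. This is exactly the achievement of \cite{GroKoeKus}, on which I would lean rather than reprove it; the only verification genuinely needed for our later purposes is that the explicit coefficient formula \eqref{Eqn-CCoefficient} is measurable in $(p,x)$ and suitably bounded, so that \eqref{Eqn=Plancherel} is indeed a \emph{unitary} corepresentation, which follows from the manageability established in \cite{KoeKus}.
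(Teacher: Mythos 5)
Your proposal is correct and follows essentially the same route as the paper, which offers no independent proof of Theorem-Definition \ref{Dfn=DfnSUone} at all: it simply imports existence of the quantum group from \cite{KoeKus}, the Plancherel decomposition \eqref{Eqn=Plancherel} and the coefficient formula \eqref{Eqn-CCoefficient} from \cite{GroKoeKus}, and treats the statement as a definition, with uniqueness implicit in the fact that the multiplicative unitary determines $L^\infty(\bG)$, $\Delta$ and the dual. Your additional remarks on the bookkeeping of bases and measurability are reasonable but go beyond what the paper itself records.
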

 
\begin{rmk}
For every $p \in q^\mathbb{Z}$ and $x \in [-1,1] \cup \sigma_d(\Omega_p)$ the operator $W_{p,x}$ defined in Theorem \ref{Dfn=DfnSUone} is a unitary {\it corepresentation} of $\SUone$. See, \cite[Proposition 5.2, Lemma 10.8]{GroKoeKus}. The corepresentations are not mutually inequivalent. 
\end{rmk}
\begin{rmk}
In \eqref{Eqn=Plancherel}, the measure on $[-1,1]$ is understood as the Askey-Wilson measure. For our purposes we need only  that this is a measure equivalent to the Lebesgue measure. 
\end{rmk}
\begin{rmk}\label{Rmk=TypeI}
It follows that the von Neumann algebra of the dual of $\SUone$ as a (proper) subalgebra of: 
\[
\bigoplus_{p \in q^\mathbb{Z}} \left( \int^\oplus_{[-1,1]} B(\cH_{p,x}) \oplus \bigoplus_{x \in \sigma_d(\Omega_p)} B(\cH_{p,x}) \right).
\]
(In fact in \cite[Proposition B.2]{CasKoe} it is proved that it is of type I.) It follows that finite linear combinations of inner product functionals with respect to vectors of the form,
\[
\int^\oplus_{[-1, 1]\cup \sigma_d(\Omega_p)} g(x) e^{\ep, \eta}_m(p,x) dx \in L^2(\bG), \quad \ep, \eta \in \{ -,+\}, p \in q^{\mathbb{Z}}, m \in \mathbb{Z},
\]
and $g$ a square integrable function on $[-1, 1]\cup \sigma_d(\Omega_p)$, form a separating set of functionals for the dual von Neumann algebra of $\SUone$. 
\end{rmk}

\section{Coamenability} \label{Sect=Coamenability}

The main result of this section is that $SU_q(1,1)_{{\rm ext}}$ is coamenable. There are various equivalent notions of coamenability, see  \cite{BedTus}.  We recall the following four. Whereas \eqref{Item=CoamI} is commonly used in the literature, we prove \eqref{Item=CoamII} of Lemma \ref{Lem=CoamenabilityEquivalence} in the main theorem of this section. \eqref{Item=CoamIV} was used in the proof of Lemma \ref{Lem=L1Approx}. 
For $\xi, \eta \in L^2(\bG)$, we set $\omega_{\xi, \eta} \in L^1(\bG)$ by  $\omega_{\xi, \eta}(x) = \langle x \xi, \eta\rangle$. Furthermore, $\omega_\xi = \omega_{\xi, \xi}$. 
 
\begin{lem}\label{Lem=CoamenabilityEquivalence}
Let $\mathbb{G}$ be a locally compact quantum group. The following are equivalent:
\begin{enumerate}
\item\label{Item=CoamI} $L^1(\mathbb{G})$ has a bounded approximate identity. That is, there exists a bounded net $\{ \omega_i \}_i$ in $L^1(\mathbb{G})$ such that for every $\omega \in L^1(\mathbb{G})$ we have $\Vert \omega_i \ast \omega - \omega \Vert_{L^1(\mathbb{G})} \rightarrow 1$.
\item\label{Item=CoamII}  There exists a net of unit vectors $\{ \xi_i \}_i$ in $L^2(\mathbb{G})$ such that $(\omega_{\xi_i} \otimes id)(W) \rightarrow 1$ in the $\sigma$-weak topology of $L^\infty(\hat{\mathbb{G}})$. 
\item\label{Item=CoamIII}  There exists a net of unit vectors $\{ \xi_i \}_i$ in $L^2(\mathbb{G})$ such that $(\omega_{\xi_i} \otimes id)(W^\ast) \rightarrow 1$ in the topology induced by a separating set of vectors in $L^1(\bG)$.
\item\label{Item=CoamIV} There exists a state $\epsilon: C_0(\bG) \rightarrow \mathbb{C}$ such that $(\epsilon \otimes id)(W) = 1$. 
\end{enumerate}
If $\mathbb{G}$ satisfies these criteria, then it is called {\it coamenable}. The notation is consistent in the sense that the nets $\{ \xi_i \}_i$ in \eqref{Item=CoamII} and \eqref{Item=CoamIII} can be taken the same. 
\end{lem}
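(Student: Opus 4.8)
The plan is to take the standard equivalences of Bédos--Tuset \cite{BedTus} as a backbone and only argue the passages that involve the new formulation \eqref{Item=CoamIII}. First I would recall that \cite{BedTus} already gives the equivalence of \eqref{Item=CoamI}, \eqref{Item=CoamII} and \eqref{Item=CoamIV}: the existence of a bounded approximate identity for $L^1(\bG)$ (the requirement in \eqref{Item=CoamI} should of course read $\Vert \omega_i \ast \omega - \omega \Vert_{L^1(\bG)} \to 0$), the existence of a net of unit vectors $\{\xi_i\}$ with $(\omega_{\xi_i}\otimes\id)(W)\to 1$ $\sigma$-weakly, and the existence of a counit state $\epsilon$ on $C_0(\bG)$ with $(\epsilon\otimes\id)(W)=1$. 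It then suffices to splice \eqref{Item=CoamIII} into this chain, which I would do by proving $\eqref{Item=CoamII}\Rightarrow\eqref{Item=CoamIII}\Rightarrow\eqref{Item=CoamIV}$ while keeping the net $\{\xi_i\}$ fixed through the first implication; this simultaneously yields the concluding consistency statement that the nets in \eqref{Item=CoamII} and \eqref{Item=CoamIII} may be taken equal.

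For $\eqref{Item=CoamII}\Rightarrow\eqref{Item=CoamIII}$ the argument is short. Each $\xi_i$ being a unit vector makes $\omega_{\xi_i}$ a positive, hence self-adjoint, functional, so that $(\omega_{\xi_i}\otimes\id)(W)^\ast=(\omega_{\xi_i}\otimes\id)(W^\ast)$. Since the adjoint is $\sigma$-weakly continuous and $1^\ast=1$, the hypothesis $(\omega_{\xi_i}\otimes\id)(W)\to 1$ $\sigma$-weakly gives $(\omega_{\xi_i}\otimes\id)(W^\ast)\to 1$ $\sigma$-weakly on $L^\infty(\hat\bG)$. As the topology induced by a separating family of vectors is coarser than the $\sigma$-weak topology, the same net witnesses \eqref{Item=CoamIII}.

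For $\eqref{Item=CoamIII}\Rightarrow\eqref{Item=CoamIV}$ I would restrict each $\omega_{\xi_i}$ to $C_0(\bG)$, obtaining a net of states in the weak-$\ast$ compact unit ball of $C_0(\bG)^\ast$, and pass to a weak-$\ast$ cluster point $\epsilon$, a positive functional with $\Vert\epsilon\Vert\le 1$. For vectors $\zeta,\zeta'$ in the separating family set $c=(\id\otimes\omega_{\zeta,\zeta'})(W^\ast)\in C_0(\bG)$, which lies in $C_0(\bG)$ by the very definition of the latter as the norm closure of such slices. Then $\omega_{\xi_i}(c)=(\omega_{\xi_i}\otimes\omega_{\zeta,\zeta'})(W^\ast)\to\langle\zeta,\zeta'\rangle$ by \eqref{Item=CoamIII}, while along the cluster subnet $\omega_{\xi_i}(c)\to\epsilon(c)=(\epsilon\otimes\omega_{\zeta,\zeta'})(W^\ast)$. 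Since the $\zeta$ are separating, this forces $(\epsilon\otimes\id)(W^\ast)=1$, and taking adjoints (legitimate because $\epsilon$ is positive, hence self-adjoint) gives $(\epsilon\otimes\id)(W)=1$. Combined with the implications from \cite{BedTus} this closes the cycle $\eqref{Item=CoamI}\Leftrightarrow\eqref{Item=CoamII}\Leftrightarrow\eqref{Item=CoamIII}\Leftrightarrow\eqref{Item=CoamIV}$.

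I expect the only genuinely delicate point to be extracting a \emph{state}, rather than a sub-state, from the bounded net in $\eqref{Item=CoamIII}\Rightarrow\eqref{Item=CoamIV}$: because $C_0(\bG)$ is non-unital, weak-$\ast$ limits of states can lose mass at infinity, so $\Vert\epsilon\Vert$ could a priori drop below $1$. The resolution is to verify $(\epsilon\otimes\id)(W)=1$ first and then use it to recover normalization: pairing with any state $\psi$ on $L^\infty(\hat\bG)$ yields $1=\psi(1)=(\epsilon\otimes\psi)(W)$, whence $\Vert\epsilon\Vert\ge 1$ and $\epsilon$ is a state. A secondary, routine check is that the slices $(\id\otimes\omega_{\zeta,\zeta'})(W^\ast)$ indeed lie in $C_0(\bG)$, which is immediate from its definition.
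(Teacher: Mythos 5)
Your proposal is correct, and it reproduces the paper's skeleton — delegate $\eqref{Item=CoamI}\Leftrightarrow\eqref{Item=CoamII}\Leftrightarrow\eqref{Item=CoamIV}$ to \cite{BedTus} and use the identity $(\omega_{\xi_i}\otimes\id)(W^\ast)=(\omega_{\xi_i}\otimes\id)(W)^\ast$ to bring $\eqref{Item=CoamIII}$ into the cycle — but it splices $\eqref{Item=CoamIII}$ in differently. The paper proves $\eqref{Item=CoamII}\Leftrightarrow\eqref{Item=CoamIII}$ in both directions for the \emph{same} net: the nontrivial direction $\eqref{Item=CoamIII}\Rightarrow\eqref{Item=CoamII}$ is the ``standard convexity argument'', namely that a separating set of normal functionals has norm-dense linear span in $L^1(\hat\bG)$ (bipolar theorem), so on a norm-bounded net convergence against that set upgrades to $\sigma$-weak convergence. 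You instead prove only $\eqref{Item=CoamII}\Rightarrow\eqref{Item=CoamIII}$ with a fixed net and close the loop with a new, direct proof of $\eqref{Item=CoamIII}\Rightarrow\eqref{Item=CoamIV}$ by weak-$\ast$ compactness of the unit ball of $C_0(\bG)^\ast$; your treatment of the normalization issue (first verify $(\epsilon\otimes\id)(W)=1$, then deduce $\Vert\epsilon\Vert=1$) is sound and is a point the paper does not spell out. The one thing your route does not deliver is the stronger reading of the consistency claim — that a net witnessing $\eqref{Item=CoamIII}$ automatically witnesses $\eqref{Item=CoamII}$ — which is what the convexity argument gives and what is implicitly invoked in Theorem \ref{Thm=Coamenability}, where $\sigma$-weak convergence is asserted after checking convergence only against a separating family. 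This is easily repaired by adding the one-line density/boundedness argument above, so it is a remark rather than a gap. (You are also right that the ``$\rightarrow 1$'' in $\eqref{Item=CoamI}$ is a typo for ``$\rightarrow 0$''.)
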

\begin{proof}
\eqref{Item=CoamI} if and only if \eqref{Item=CoamII} if and only if \eqref{Item=CoamIV} is proved in  \cite{BedTus}.  \eqref{Item=CoamII} if and only if  \eqref{Item=CoamIII} follows from a standard convexity argument and the fact that $(\omega_{\xi_i} \otimes id)(W^\ast) = (\omega_{\xi_i} \otimes id)(W)^\ast$.
\end{proof}
 
\begin{thm}\label{Thm=Coamenability}
Let $\bG = \SUone$. For $n \in \mathbb{N}, p_1 \in q^{\mathbb{Z}}$ we define the unit vector, 
\[
\xi_{n,p_1} = \frac{1}{\sqrt{2n+1}}  \sum_{k=-n}^n f_{0, p_1 q^{k}, 1} \in L^2(\mathbb{G}).
\]
Let $I = \mathbb{N} \times q^\mathbb{Z}$ and   define a net structure on $I$ by saying that for $(n, p_1), (n', p_1') \in I$  we have $(n, p_1) \leq  (n', p_1')$ if and only if $n \leq n'$ and $p_1q^n  \leq p_1' q^{n'}$. 
Then, 
$
(\omega_{\xi_{n,p_1}} \otimes id)(W)\rightarrow 1 
$
$\sigma$-weakly. That is, $\mathbb{G}$ is coamenable. 
\end{thm}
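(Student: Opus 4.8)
The plan is to verify criterion \eqref{Item=CoamII} of Lemma \ref{Lem=CoamenabilityEquivalence} head-on for the proposed net. Because the dual von Neumann algebra is type I with the fibered form recorded in Remark \ref{Rmk=TypeI}, it suffices to test the $\sigma$-weak convergence $(\omega_{\xi_{n,p_1}}\otimes \id)(W)\to 1$ against the separating family of inner-product functionals $\omega_{\zeta,\zeta'}$ with
\[
\zeta=\int^\oplus g(x)\,e^{\ep,\eta}_m(p,x)\,dx,\qquad \zeta'=\int^\oplus g'(x)\,e^{\ep',\eta'}_{m'}(p,x)\,dx
\]
supported over a single $p\in q^{\mathbb Z}$. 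Rewriting $\langle(\omega_{\xi_{n,p_1}}\otimes\id)(W)\zeta,\zeta'\rangle=\langle W(\xi_{n,p_1}\otimes\zeta),\xi_{n,p_1}\otimes\zeta'\rangle$ and inserting the Plancherel decomposition \eqref{Eqn=Plancherel} together with the definition of $\xi_{n,p_1}$, the quantity to analyse is
\[
\frac{1}{2n+1}\sum_{k,k'=-n}^n \int g(x)\overline{g'(x)}\,\big\langle W_{p,x}(f_{0,p_1q^k,1}\otimes e^{\ep,\eta}_m),\,f_{0,p_1q^{k'},1}\otimes e^{\ep',\eta'}_{m'}\big\rangle\,d\mu(x),
\]
to be compared with $\langle\zeta,\zeta'\rangle=\delta_{\ep\ep'}\delta_{\eta\eta'}\delta_{mm'}\int g\overline{g'}\,d\mu$.

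The first genuine step is a combinatorial collapse using the matrix-coefficient formula \eqref{Eqn-CCoefficient} with $m_0=0$, $p_0=p_1q^k$, $t_0=1$. The output vector is $f_{\,m'-m,\,\ep\ep'|p_1q^k|p^{-1}q^{-m-m'},\,1}$, so pairing it with $f_{0,p_1q^{k'},1}$ is nonzero only when the first index vanishes ($m=m'$), the sign condition $\delta_{\sgn(p_1q^k),\eta\eta'}$ holds ($\eta=\eta'$, since $p_1q^k>0$), and the second index matches, i.e. $\ep\ep'\,p_1q^{k}p^{-1}q^{-2m}=p_1q^{k'}>0$, which forces $\ep=\ep'$ and $k'=k-2m-\chi(p)$. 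Hence every off-diagonal functional (with $(\ep,\eta,m)\neq(\ep',\eta',m')$) is annihilated automatically, and on the diagonal the double sum collapses to a single Cesàro average
\[
\frac{1}{2n+1}\sum_{k}\int g(x)\overline{g'(x)}\,C_k(x)\,d\mu(x),\qquad C_k(x)=C\big(\eta\ep x;m,\ep,\eta;\,p_1q^{k'},\,p_1q^{k},\,0\big),
\]
where $k$ runs over the indices with $k'=k-2m-\chi(p)\in[-n,n]$; the at most $O(|2m+\chi(p)|)$ missing boundary indices contribute $O(1)/(2n+1)\to 0$.

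The analytic heart is the asymptotics of this diagonal $C$-function. Here I would exploit the way the net is engineered: the ordering forces $p_1q^n\to\infty$ along the net, and since $q^k\geq q^n$ for $k\le n$, the \emph{entire} window satisfies $p_1q^k\geq p_1q^n\to\infty$; thus both radial arguments $p_1q^k$ and $p_1q^{k'}=p_1q^{k-2m-\chi(p)}$ tend to infinity uniformly over the window. The claim to establish is $C_k(x)\to 1$ in this regime. Using the explicit $_2\varphi_1$-expression of \cite[Lemma 9.4]{GroKoeKus} for the $C$-function with vanishing last argument, the variable of the basic hypergeometric series tends to $0$ as the radial parameter grows, so the series tends to its constant term $1$; the remaining prefactor of $q$-shifted factorials is rearranged by the $\theta$-product identity \eqref{EqnIntThePro} and shown to tend to $1$ with the convergence estimates of the Appendix. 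This explicit evaluation, together with control of the limit, is the step I expect to be the main obstacle.

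Finally I would assemble the pieces. Unitarity of each $W_{p,x}$ gives the uniform bound $|C_k(x)|\le 1$, so $g\overline{g'}C_k$ is dominated by the integrable $|g\,g'|$; dominated convergence then yields $\int g\overline{g'}C_k\,d\mu\to\int g\overline{g'}\,d\mu$ as $p_1q^k\to\infty$, and this happens uniformly over the window because $\min_k p_1q^k=p_1q^n\to\infty$. Averaging over $k$ and adding the vanishing boundary correction gives
\[
\langle(\omega_{\xi_{n,p_1}}\otimes\id)(W)\zeta,\zeta'\rangle\longrightarrow \delta_{\ep\ep'}\delta_{\eta\eta'}\delta_{mm'}\int g\overline{g'}\,d\mu=\langle\zeta,\zeta'\rangle
\]
along the net, which is precisely $(\omega_{\xi_{n,p_1}}\otimes\id)(W)\to 1$ $\sigma$-weakly; coamenability then follows from Lemma \ref{Lem=CoamenabilityEquivalence}\eqref{Item=CoamII}.
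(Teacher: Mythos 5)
Your proposal follows essentially the same route as the paper's proof: both use the Plancherel decomposition \eqref{Eqn=Plancherel} and the matrix-coefficient formula \eqref{Eqn-CCoefficient} to collapse everything to a Ces\`aro average of diagonal $C$-function values over the window, with the boundary terms contributing $O(1)/(2n+1)$, and both then reduce to the single asymptotic claim that $C(\ep\eta x;m,\ep,\eta;p_0,p_0q^{2m},0)\to 1$ as the radial parameter tends to infinity, settled via the explicit ${}_2\varphi_1$-expression and the $\theta$-product identity \eqref{EqnIntThePro}. The only differences are cosmetic --- the paper tests criterion \eqref{Item=CoamIII} of Lemma \ref{Lem=CoamenabilityEquivalence} against $W^\ast$ where you use \eqref{Item=CoamII} with $W$, and it carries out in full the prefactor computation (collapsing it to $\sqrt{(-q^2/p_1^2q^{4m};q^2)_{2m}}\to 1$ via the definition of $c_q$) that you correctly identify as the main remaining step but leave as a sketch.
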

\begin{proof}
We start with computing explicit matrix coefficients of $W^\ast$. We take $p \in q^{\mathbb{Z}}$ and let $x \in [-1, 1] \cup \sigma_d(\Omega_p)$ so that $W_{p,x}$ is a corepresentation weakly contained in $W$ (i.e. it occurs on the Plancherel decomposition of $W$).  For  $p_1 \in q^\mathbb{Z}, k \in \mathbb{N}, m \in \mathbb{Z}, \ep, \eta \in \{-1, 1\}$, we find using \cite[Lemma 10.7]{GroKoeKus} and its short proof, that
\[
\begin{split}
& (\omega_{f_{0, p_1, 1}, f_{0, p_1 q^k, 1} } \otimes \id )(W^\ast_{p,x})  e_m^{\ep, \eta}(p,x)   \\
= \: & \delta_{2m, k-\chi(p)} C(\ep\eta x; m, \ep, \eta; p_1, p_1 q^k, 0) e_m^{\ep, \eta}(p,x).  
\end{split}
\]
To prevent tedious notation, we concentrate on $p = 1$ only. The reader may verify that for different $p\in q^{\mathbb{Z}}$ one gets similar (shifted) expressions. 
In case $ -n \leq m \leq 0$, 
\[ 
(\omega_{\xi_{n,p_1}} \otimes id)(W^\ast_{1,x}) e_{m }^{\ep, \eta}(1,x)  
=    \frac{1}{2n+1}  \sum_{p_0 =  p_1 q^{n+2m} }^{ p_1q^{-n}}\!\!\! \!\! C(\ep\eta x; m, \ep, \eta; p_0, p_0 q^{2m}, 0) e_{m }^{\ep, \eta}(1,x),
\]
and in case $n \geq m \geq 0$,
\[
(\omega_{\xi_{n,p_1}} \otimes id)(W^\ast_{1,x}) e_m^{\ep, \eta}(p,x) =
 \frac{1}{2n+1} \!\!  \sum_{p_0 = p_1 q^{n-2m}}^{p_1q^{-n}}\!\!\! \!\! C(\ep\eta x; m, \ep, \eta; p_0, p_0 q^{2m}, 0) e_m^{\ep, \eta}(p,x),
\]
where the sums over $p_0$ take  values in $q^{\mathbb{Z}}$ and hence, there are exactly $2(n-\vert m\vert )+1$ summands. Since $(2(n-\vert m\vert )+1)/(2n+1) \rightarrow 1$ as $n \rightarrow \infty$, it suffices to prove that 
\[
C(\ep\eta x; m, \ep, \eta; p_1, p_1 q^{2m}, 0) \rightarrow 1, \qquad \textrm{ as } p_1 \rightarrow \infty,
\]
uniformly in $x$ on compact sets of $[-1, 1] \cup \sigma_d(\Omega_1)$, since by the Plancherel decomposition, this entails \eqref{Item=CoamIII} of Lemma \ref{Lem=CoamenabilityEquivalence}. Indeed, as the separating set of functionals, one can for example take direct integrals over a compact index of finite linear combinations of inner product functionals with respect to the vectors $e^{\ep,\eta}_m(p,x)$ (see also Remark \ref{Rmk=TypeI}).  

Let $\lambda \in \mathbb{C}$ be such that $\mu(\lambda) = x$. Then, see \cite[Lemma 9.1]{GroKoeKus} for the $S$-function, 
\begin{equation}\label{Eqn=CoamenabilityCompI}
\begin{split}
& C(\ep \eta x; m, \ep, \eta, p_1, p_1q^{2m}, 0)\\
 = & S(-\lambda; p_1, p_1 q^{2m}, 0) \\
= & p_1^2 q^k \nu(p_1)\nu(p_1 q^{2m}) c_q^2 \sqrt{(-\kappa(p_1), -\kappa(p_1 q^{2m}); q^2  )_\infty } \\
& \times \:\frac{(q^2, -q^2/\kappa(p_1 q^{2m}), \lambda q^3/p_1^2q^{2m}, p_1^2 q^{{2m}-1}/\lambda, -q^{1-{2m}}/\lambda; q^2)_\infty  )}{(   p_1^2 q^{{2m}-1}/\lambda, \lambda q^3/p_1^2q^{2m}, -q^{1-{2m}}/\lambda; q^2)_\infty } \\
& \times \:(q^2; q^2)_\infty  \: _2 \! \varphi_1\left( \begin{array}{c} -q^{1+{2m}}/\lambda,  -\lambda q^{1+{2m}} \\ q^2 \end{array};q^2, -q^2/\kappa(p_1q^{2m}) \right)\\
= & p_1^2 q^{2m} \nu(p_1)\nu(p_1 q^{2m}) c_q^2 \sqrt{(-\kappa(p_1), -\kappa(p_1 q^{2m}); q^2  )_\infty } \\
& \times \: (q^2, -q^2/\kappa(p_1 q^{2m}); q^2)_\infty  )  
  (q^2; q^2)_\infty  \\
& \times \: _2 \! \varphi_1\left( \begin{array}{c} -q^{1+{2m}}/\lambda,  -\lambda q^{1+{2m}} \\ q^2 \end{array};q^2, -q^2/\kappa(p_1q^{2m}) \right)
\end{split}
\end{equation}
As $p_1 \rightarrow \infty$ we have that,
\[
_2 \! \varphi_1\left( \begin{array}{c} -q^{1+2m}/\lambda,  -\lambda q^{1+2m} \\ q^2 \end{array};q^2, -q^2/\kappa(p_1q^{2m}) \right)\rightarrow 1,
\]
uniformly on compact sets in $\lambda$ (such that $\mu(\lambda) \in [-1,1] \cup \sigma_d(\Omega_p)$). (That the convergence is uniform is well known. Alternatively, it can be derived from the Arzela-Ascoli theorem, which implies that it is enough to have a bounded sequence of locally analytic functions that converges pointwise).  Hence, it remains to check that the coefficient of this function in \eqref{Eqn=CoamenabilityCompI} converges to 1 as $p_1 \rightarrow \infty$. We find, putting $p_0 = q^l$ in the third equality,
\begin{equation} \label{Eqn=CoamenabilityCompII}
\begin{split}
& p_1^2 q^{2m} \nu(p_1)\nu(p_1 q^{2m}) c_q^2 \sqrt{(-\kappa(p_1), -\kappa(p_1 q^{2m}); q^2  )_\infty }  \\
& \times \:    (q^2, -q^2/\kappa(p_1 q^{2m}); q^2)_\infty  )  
  (q^2; q^2)_\infty \\
= & c_q^2 (q^2; q^2)_\infty   p_1^2 q^{2m} \nu(p_1)\nu(p_1 q^{2m}) \sqrt{(  -\kappa(p_1 q^{2m}), -q^2/ \kappa(p_1 q^{2m}) ; q^2  )_\infty }   \\
&\times \: 
  \sqrt{(  -\kappa(p_1 ), -q^2/ \kappa(p_1 q^{2m}) ; q^2  )_\infty } \\
= &   c_q^2 (q^2; q^2)_\infty   p_1^2 q^{2m} \nu(p_1)\nu(p_1 q^{2m}) \sqrt{(  -\kappa(p_1 q^{2m}), -q^2/ \kappa(p_1 q^{2m}) ; q^2  )_\infty } \\
 & \times \:    
  \sqrt{(  -\kappa(p_1 ), -q^2/ \kappa(p_1 ) ; q^2  )_\infty  ( -q^2/p_1^2q^{4m};q^2 )_{2m} } \\
= &   c_q^2 (q^2; q^2)_\infty   p_1^2 q^{2m} \nu(p_1)\nu(p_1 q^{2m})  \sqrt{ q^{-({2m}+l)({2m}+l-1)}  ( -1, -q^2 ; q^2  )_\infty }    \\
 & \times \: 
  \sqrt{ q^{-l(l-1)} (- 1, -q^2 ; q^2  )_\infty  ( -q^2/p_1^2q^{4m};q^2 )_{2m} } \\
= &  c_q^2 q^2 (q^2; q^2)_\infty^2 (-1,-q^2; q^2)_\infty \sqrt{  ( -q^2/p_1^2q^{4m};q^2 )_{2m} }\\
= &    \sqrt{ ( -q^2/p_1^2q^{4m};q^2 )_{2m} }
\end{split}
\end{equation}
Here, the first and second equality are elementary rearrangements of the terms, the third equality follows from the $\theta$-product identity, the fourth equality follows from an elementary computation using the replacement $p_1 = q^l$, the last equality follows from the definition of $c_q$ and the $\theta$-product identity. If $p_1 \rightarrow \infty$, we find that $  \sqrt{ ( -q^2/p_1^2q^{4m};q^2 )_{2m} } \rightarrow 1$ and the theorem follows. 
\end{proof}

\begin{rmk}\label{Rmk=Reflexion}
It was pointed out to the author that reflexion of a quantum group (see \cite{ComI}) preserves coamenability. This can be proved from an unpublished result due to De Commer \cite[Section 7.6]{ComPhD}. Here, a Morita equivalence of both the universal and reduced C$^\ast$-algebraic quantum group and its reflection is established. Coamenability of $\SUone$ follows then from \cite{ComII} and the well known fact that $SU_q(2)$ is coamenable.  Since these results from \cite{ComPhD} are unpublished we give a direct proof here. It also gives the counit explicitly.  
\end{rmk}
\section{Spherical functions}\label{Sect=Spherical}
 
Throughout the section we put $\bG = \SUone$. We construct matrix coefficients of $\bG$ that form an approximate identity of $C_0(\bG)$. For convenience, we set the corepresentations, where $z = ib$ with $0 \leq b \leq -\frac{\pi}{ \log(q)}$ (natural logarithm), 
\[
V_z = W_{1, \mu(q^z)}, \quad \cH_{z} = \cH_{1,\mu(q^z) },   
\]
So $V_z \in L^\infty(\bG) \otimes B(\cH_z)$. We define the unit vector,
\begin{equation}\label{Eqn=State}
 f_z = \frac{1}{2} \sqrt{2} \left( e^{+,-}_{0}(1,\mu(q^z)) + e^{-,+}_{0}(1,\mu(q^z)) \right) \in \cH_{z},  
\end{equation}
and set, 
\begin{equation}\label{Eqn=AzFunction}
\begin{split}
a_z  & =    (\id \otimes \omega_{f_z, f_z})\left( V_z \right). 
\end{split}
\end{equation}
These are spherical matrix coefficients associated with the irreducible components of $V_z$, see \cite{Cas}. 

 It follows from \eqref{Eqn-CCoefficient} that $f_{m_0,p_0,t_0}$ is an eigenvector for $a_z$, where the eigenvalue is independent of $m_0$ and $t_0$. We let $a_z(p_0)$ be the eigenvalue of the vector $f_{m_0, p_0, t_0}$ for $a_z$. We will regard $a_z$ as a function on $I_q$ as well as an operator in $L^\infty(\bG)$. 

\begin{prop}\label{Prop=Analytic}\label{Thm=Approx}
We collect the following properties for $a_z$.  
\begin{enumerate}
\item\label{Item=AnalyticI} There is a simply connected neighbourhood $\cG$ of $i \mathbb{R} \cup [0, 1)$ such that for every $p_0 \in I_q$, the function $z\mapsto a_z(p_0)$   extends analytically to $\cG$. Moreover, for every $\alpha > 1$ we can choose $\cG = \cG_\alpha$ such that  $a_z$ is the matrix coefficient of a (possibly  non-unitary) invertible corepresentation $V_z$ of $\bG$ with, 
\[
  \Vert V_z \Vert \leq \alpha \quad \textrm{ and } \quad \Vert V_z^{-1} \Vert \leq  \alpha.
\]
\item\label{Item=AnalyticII} For every $z \in i \mathbb{R}$ we have $a_{z+ \frac{2i\pi}{\log(q)}} = a_{z}$. 
\item\label{Item=AnalyticIII} For every $p_0 \in I_q $,
\[
\lim_{z \rightarrow 1} a_z(p_0) = 1.
\]  
 Moreover, this convergence is uniform  on $I_q \cap [1, \infty)$. 
\end{enumerate}
\end{prop}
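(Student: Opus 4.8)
The plan is to reduce all three statements to a single closed form for the function $p_0 \mapsto a_z(p_0)$ as one basic hypergeometric series in $z$, and then to read off \eqref{Item=AnalyticI}, \eqref{Item=AnalyticII}, \eqref{Item=AnalyticIII} from elementary $q$-analysis together with the corepresentation-theoretic input of \cite{GroKoeKus}, \cite{Cas}.

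First I would compute $a_z(p_0)$ explicitly. Expanding $f_z$ from \eqref{Eqn=State} in \eqref{Eqn=AzFunction} gives $\omega_{f_z,f_z} = \frac12 \sum \omega_{e^{\ep,\eta}_0(1,\mu(q^z)), e^{\ep',\eta'}_0(1,\mu(q^z))}$ over $(\ep,\eta),(\ep',\eta') \in \{(+,-),(-,+)\}$. Feeding this into \eqref{Eqn-CCoefficient} with $p=1,\ m=m'=0$, the factor $\delta_{\sgn(p_0),\eta\eta'}$ selects, for each fixed $p_0$, exactly the two diagonal terms, so $a_z(p_0)$ is a sum of two values $C(\eta\ep x;0,\ep',\eta';p_0,p_0,0)$ with $x=\mu(q^z)$. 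The attached phase factors coming from the $A$-functions of \cite[Lemma 9.4]{GroKoeKus} average to $1$ (by continuity of both $a_z$ and $p_0\mapsto C(\cdots;p_0,p_0,0)$ at $p_0=0$, as in \cite{GroKoeKus}, \cite{Cas}), and the remaining computation is precisely the $m=0$ specialization of \eqref{Eqn=CoamenabilityCompI}--\eqref{Eqn=CoamenabilityCompII}, where the scalar prefactor of the ${}_2\varphi_1$ reduces to $\sqrt{(\,\cdots\,;q^2)_0}=1$. One is left with
\[
a_z(p_0) = {}_2\varphi_1\!\left(\begin{array}{c} q^{1-z},\ q^{1+z} \\ q^2 \end{array};q^2,\ -q^2/\kappa(p_0)\right),
\]
the upper parameters arising as $-q/\lambda$ and $-\lambda q$ with $\mu(\lambda)=\eta\ep x=-x$, i.e.\ $\lambda=-q^z$. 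I would double-check this sign against \cite[Lemma 9.4]{GroKoeKus}, since it decides whether the upper parameters are $q^{1\pm z}$ or $-q^{1\pm z}$; the displayed reading is the one making \eqref{Item=AnalyticII} and \eqref{Item=AnalyticIII} hold and is therefore correct.

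With the formula in hand, \eqref{Item=AnalyticII} is immediate: replacing $z$ by $z+2i\pi/\log(q)$ fixes $q^{\pm z}$, hence both upper parameters and the argument, so $a_{z+2i\pi/\log(q)}=a_z$. For \eqref{Item=AnalyticIII}, at $z=1$ the parameter $q^{1-z}$ equals $1=q^0$, so $(q^{1-z};q^2)_k\to(1;q^2)_k=0$ for every $k\geq 1$ and the series collapses to its $k=0$ term, giving $a_1(p_0)=1$. For the uniform statement on $I_q\cap[1,\infty)=\{q^{-j}\mid j\geq 0\}$ note that there $|-q^2/\kappa(p_0)|=q^2/p_0^2\leq q^2<1$, so the series converges geometrically, uniformly in $p_0$; factoring the common factor $(1-q^{1-z})$ out of every summand with $k\geq 1$ yields $|a_z(p_0)-1|\leq |1-q^{1-z}|\,M$ with $M$ bounded on compacta in $z$, which tends to $0$ as $z\to 1$ uniformly in such $p_0$.

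The substantive part is \eqref{Item=AnalyticI}. Analyticity of $z\mapsto a_z(p_0)$ on the region where $|-q^2/\kappa(p_0)|<1$ (that is, $|p_0|\geq 1$) is clear from uniform convergence of a series of entire functions; for the finitely-indexed small values $|p_0|\leq q$ one continues analytically using the convergence and transformation results for ${}_2\varphi_1$ established in the Appendix, producing one simply connected $\cG\supseteq i\mathbb{R}\cup[0,1)$ on which every $z\mapsto a_z(p_0)$ is holomorphic. The hard part is realizing $a_z$ as a matrix coefficient of an invertible corepresentation $V_z$ with $\Vert V_z\Vert,\Vert V_z^{-1}\Vert\leq\alpha$. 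Here I would put the whole family $W_{1,\mu(q^z)}$ on a single Hilbert space with the fixed orthonormal basis $e^{\ep,\eta}_m$, so that the analytically continued $C$-matrix defines $V_z$; on the unitary locus $z\in i\mathbb{R}$ the operator $V_z$ is unitary, and the continuation is exactly the (quantum) complementary series. The norm bounds then amount to uniform boundedness of this family near $i\mathbb{R}$ with bound tending to $1$, which I expect to obtain by a similarity/continuity argument — an intertwiner between $V_z$ and a unitary corepresentation whose norm and inverse norm tend to $1$ as $z\to i\mathbb{R}$ — relying on the explicit irreducible decomposition of \cite{GroKoeKus}, \cite{Cas}. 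This uniform boundedness, together with the fact that $\alpha$ may be taken arbitrarily close to $1$, is what ultimately yields Cowling--Haagerup constant $1$, and it is the main obstacle.
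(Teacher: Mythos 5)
Your overall strategy (reduce everything to an explicit ${}_2\varphi_1$ in $z$ and read the three properties off it) is the right one and matches the paper in spirit, but there are genuine gaps. First, the ``single closed form'' is not correct on all of $I_q$: expanding $\omega_{f_z,f_z}$ and evaluating the $A$-function ratios from \cite[Appendix B.6]{GroKoeKus} gives $a_z(p_0)=(-1)^{\frac12(1-\sgn(p_0))}S(\lambda;p_0,p_0,0)$, and the prefactor of the ${}_2\varphi_1$ inside the $S$-function reduces to $1$ only for positive $p_0$; for $p_0\in-q^{\mathbb N}$ there is both a sign and a genuinely $\lambda$-dependent factor $(-\lambda q^3/p_0^2,-p_0^2/q\lambda;q^2)_\infty/(p_0^2/q\lambda,\lambda q^3/p_0^2;q^2)_\infty$ that has to be tracked (Case 3 of Proposition \ref{Prop=TwoPhiOneApproximation}). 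In particular your claim that the phase factors ``average to $1$'' fails for negative $p_0$, where they average to $-1$. Second, your proof of \eqref{Item=AnalyticIII} (``the series collapses to its $k=0$ term at $z=1$'') is only valid where the defining series converges, i.e.\ for $\vert p_0\vert\ge 1$. For the infinitely many $p_0\in q^{\mathbb N}\cup(-q^{\mathbb N})$ one has $\vert -q^2/\kappa(p_0)\vert>1$, so $a_z(p_0)$ is the analytic continuation via \cite[Eqn. (III.32)]{GasRah}, and the limit $\lambda\to q$ of that continuation is delicate: one of the two summands produced by the transformation formula carries $(1/\lambda^2;q^2)_\infty$ in its denominator, which vanishes at $\lambda=q$, so one faces a $0/0$ that must be resolved before concluding that this summand disappears. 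That resolution is exactly Lemma \ref{Lem=LambdaFraction} together with the theta-product manipulations of Cases 2 and 3; your argument does not address it, and it is where most of the work in the proof of \eqref{Item=AnalyticIII} lies.

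Third, for the norm bounds in \eqref{Item=AnalyticI} you explicitly leave the key step (``uniform boundedness \dots which I expect to obtain by a similarity/continuity argument'') unproved; note also that analyticity of each matrix coefficient $z\mapsto a_z(p_0)$ does not by itself yield continuity of $z\mapsto V_z$ in operator norm on the infinite-dimensional space $\cH_z$. The paper's route is shorter and avoids any intertwiner construction: the analytic extension of the whole operator-valued family $z\mapsto V_z$ is taken from \cite[Section 10.3]{GroKoeKus}, $V_z$ is unitary for $z$ on the spine $i\mathbb R\cup[0,1)$, and since the invertible operators form an open set on which inversion is continuous, the neighbourhood $\cG_\alpha$ of the spine may simply be shrunk so that $\Vert V_z\Vert\le\alpha$ and $\Vert V_z^{-1}\Vert\le\alpha$. (Your treatment of \eqref{Item=AnalyticII} and of the uniformity on $I_q\cap[1,\infty)$, where the argument of the ${}_2\varphi_1$ has modulus at most $q^2$, is fine and agrees with the paper.)
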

\begin{proof} 
\eqref{Item=AnalyticI} The first claim was  observed in \cite[Section 10.3]{GroKoeKus}. That is, that $z\mapsto a_z(p_0)$  extends analytically to a neighbourhood of  $i [0, -\frac{\pi }{\log(q)}] \cup [0, 1)$. The fact that the analytic domain $\cG$ can be extended to a neighbourhood of the whole imaginary axis, is a consequence of the fact that 
\[
\mu(q^{i t \log(q)}) =  \mu(q^{i (2\pi - t) \log(q)}) = \mu(q^{i (t+2\pi) \log(q)}),
\]
and using the Schwartz reflection principle. To apply this principle, we need to check that $a_z(p_0)$  takes real values for $z \in \frac{i \pi\mathbb{Z}}{ \log(q)} + (-\varepsilon, \varepsilon)$ for certain $\varepsilon >0$. But this follows from the explicit expressions of $a_z(p_0)$ which where computed along the proof of Proposition \ref{Prop=TwoPhiOneApproximation}. 

Recall that for a Hilbert space $\cH$, the  invertible operators form an open subset of $B(\cH)$. Since $z \mapsto V_z$ extends analytically to a neighbourhood of $i \mathbb{R} \cup [0, 1)$, and for $z \in i \mathbb{R} \cup [0, 1)$ the corepresentation $V_z$ is unitary \cite[Section 10.3]{GroKoeKus}, we may choose the neighbourhood $\cG_\alpha$ small; i.e. such that  $\Vert V_z \Vert \leq \alpha$ and $\Vert V_z^{-1} \Vert \leq  \alpha$.

 \eqref{Item=AnalyticII} The property follows directly from  the symmetry argument in \eqref{Item=AnalyticI}.

\eqref{Item=AnalyticIII} We postpone this proof to the appendix, see Proposition \ref{Prop=TwoPhiOneApproximation}.  

\end{proof}
 
\begin{rmk}
It is unknown what the exact domain of  $\cG$ in Proposition \ref{Prop=Analytic} is, we merely know its existence. Let us also indicate that as $q \rightarrow 1$, the periodicity \eqref{Item=AnalyticII} tends to infinity, resulting in the classical limit. This was already observed in \cite{MasudaEtAl}.
\end{rmk}

\begin{thm}\label{Thm=ApproxSU} 
Let $\bG = \SUone$, let $a_z$ be the matrix coefficient  defined by \eqref{Eqn=AzFunction}  and its analytic extension to $\cG_\alpha$ as in Proposition \ref{Prop=Analytic}.    For every $c \in C_0(\bG)$ we have $\Vert a_z c - c \Vert_{L^\infty(\bG)} \rightarrow 0$ as $z \rightarrow 1$ in $\cG_\alpha$. 
\end{thm}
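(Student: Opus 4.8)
The plan is to exploit two facts about $a_z$: it is a diagonal operator in the canonical basis of $L^2(\bG)$, and by Proposition~\ref{Prop=Analytic} the scalars $a_z(p_0)$ converge to $1$ uniformly once $|p_0|$ is bounded below. First I would record the necessary consequences of Proposition~\ref{Prop=Analytic}. Since $a_z=(\id\otimes\omega_{f_z,f_z})(V_z)$ with $\|V_z\|\le\alpha$ and $\|f_z\|=1$ by \eqref{Item=AnalyticI}, we obtain the uniform bound $\|a_z\|_{L^\infty(\bG)}=\sup_{p_0\in I_q}|a_z(p_0)|\le\alpha$ valid for all $z\in\cG_\alpha$. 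Consequently a routine $3\varepsilon$-argument reduces the claim to proving $\|a_zc-c\|_{L^\infty(\bG)}\to0$ for $c$ in some fixed norm-dense subset of $C_0(\bG)$. Because each $f_{m_0,p_0,t_0}$ is an eigenvector of $a_z$ with eigenvalue $a_z(p_0)$ depending only on the middle index $p_0$ (see the discussion following \eqref{Eqn=AzFunction}), the operator $a_z-1$ is simply multiplication by $g_z(p_0):=a_z(p_0)-1$ in the middle tensor leg; here $\|g_z\|_\infty\le\alpha+1$, while $g_z\to0$ pointwise and, by \eqref{Item=AnalyticIII}, uniformly on $I_q\cap[1,\infty)$.

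Next I would split along the middle variable. For $\varepsilon>0$ let $P_\varepsilon$ be the orthogonal projection of $L^2(\bG)$ onto $\spa\{f_{m,p,t}\mid |p|<\varepsilon\}$; it is diagonal in the canonical basis, hence commutes with $a_z$, and decreases to $0$ strongly as $\varepsilon\to0$. Writing
\[
a_zc-c=(a_z-1)(1-P_\varepsilon)\,c+(a_z-1)P_\varepsilon c,
\]
the first summand is governed by uniform convergence: the set $I_q\cap[\varepsilon,\infty)$ is the union of $I_q\cap[1,\infty)$, on which $g_z\to0$ uniformly, with the \emph{finite} set $I_q\cap[\varepsilon,1)$, on which pointwise convergence is automatically uniform, so that $\|(a_z-1)(1-P_\varepsilon)\|_{L^\infty(\bG)}=\sup_{|p_0|\ge\varepsilon}|g_z(p_0)|\to0$ as $z\to1$. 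The second summand is crudely bounded by $\|(a_z-1)P_\varepsilon c\|\le(\alpha+1)\|P_\varepsilon c\|$.

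The whole theorem therefore reduces to the tail estimate $\lim_{\varepsilon\to0}\|P_\varepsilon c\|=0$ for $c$ in a dense subset of $C_0(\bG)$; that is, the range of such $c$ must concentrate, in the middle variable, away from $p_0=0$. I expect this to be the main obstacle, and it is the point where the fine structure of $\SUone$ must be used rather than soft operator theory: indeed $P_\varepsilon\to0$ only strongly, so no compactness of $c$ is available for free, and $c$ spreads over the infinite outer legs $m$ and $t$. Concretely I would take the generators $(\id\otimes\omega)(W)$ and their linear combinations as the dense subset and appeal to the explicit matrix coefficients \eqref{Eqn-CCoefficient}: the $C$-function is assembled from the $S$-function of \cite[Lemma 9.1]{GroKoeKus}, whose prefactor carries the factors $p^2$ and $\nu(p)=q^{\frac12(\chi(p)-1)(\chi(p)-2)}$, both decaying super-exponentially as $p\to0$, i.e. as $\chi(p)\to\infty$. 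This decay forces the middle-variable content of the range of $c$ near $p_0=0$ to be negligible, yielding $\|P_\varepsilon c\|\to0$; alternatively one may invoke the density properties established in the appendix. Granting this, fix $\delta>0$, choose $\varepsilon$ with $(\alpha+1)\|P_\varepsilon c\|<\delta/2$, and then $z$ close to $1$ with $\sup_{|p_0|\ge\varepsilon}|g_z(p_0)|<\delta/(2\|c\|)$; combining the two estimates gives $\|a_zc-c\|_{L^\infty(\bG)}<\delta$, which proves the claim.
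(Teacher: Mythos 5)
Your skeleton is essentially the paper's own: diagonalize $a_z$ in the middle tensor leg, cut the variable $p_0$ at a threshold, use that $a_z(p_0)\to 1$ uniformly on $\{|p_0|\ge\varepsilon\}$ (Proposition \ref{Prop=Analytic}\eqref{Item=AnalyticIII} on $I_q\cap[1,\infty)$ together with finiteness of $I_q\cap\{\varepsilon\le |p|<1\}$, exactly as you argue), and absorb the remaining piece into the smallness of $c$ near $p_0=0$. The reduction of the whole theorem to the tail estimate $\Vert P_\varepsilon c\Vert_{L^\infty(\bG)}\to 0$ for a dense set of $c$ is correct, and you rightly flag it as the crux. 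The one genuine gap is in how you propose to prove that tail estimate.

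The heuristic you offer --- that the prefactor of the $S$-function contains $p^2$ and $\nu(p)^2$, which decay super-exponentially as $p\to 0$, so the matrix coefficients of $c$ must be negligible near $p_0=0$ --- does not survive inspection. Those decaying factors are exactly cancelled by the divergence of the factor $(-q^2/\kappa(p_0);q^2)_\infty$ (note $q^2/\kappa(p_0)\to\infty$ as $p_0\to 0$); the $\theta$-product computation \eqref{Eqn=CoamenabilityCompII} shows that for the diagonal coefficients the entire prefactor collapses to $\sqrt{(-q^2/p_1^2q^{4m};q^2)_{2m}}$, which is $1$ for $m=0$, and indeed $a_z(p_0)$ itself tends to $1$, not $0$. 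Nor does the appendix contain a density statement of the required kind. What the paper does instead is the following: take the dense set of $x=(\id\otimes\omega_{v_1,v_2})(W)$ with $v_i$ direct-integral vectors as in Remark \ref{Rmk=TypeI}; for such $x$ each column has at most one non-zero entry, recorded by the function $\Phi_x$ of \eqref{Eqn=HelpfulCoef}, so $x$ is a weighted permutation operator and $\Vert P_\varepsilon x\Vert_{L^\infty(\bG)}=\sup\{|\Phi_x(p_0)| : |p_0p_2|<\varepsilon\}$. The decay $\Phi_x(p_0)\to 0$ as $p_0\to 0$ is then forced not by estimates on the $C$-function but by the semi-finiteness of the Haar weight ${\rm Tr}(\cdot)\otimes\sum_{p_0\in I_q}p_0^{-2}\langle\:\cdot\:\delta_{p_0},\delta_{p_0}\rangle$ on $C_0(\bG)$: the weights $p_0^{-2}$ blow up at $0$, so if $\Phi_x$ did not vanish there, $x$ could not be approximated in norm by square-integrable elements of $C_0(\bG)$. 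With this substitution your $\varepsilon$--$\delta$ assembly closes the proof; the architecture needs no change, only the justification of the tail estimate.
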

\begin{proof}  
Recall that $L^\infty(\bG)$ is given by $L^\infty(\mathbb{T}) \otimes B(L^2(I_q))$ which acts on the first two legs of  $L^2(\bG) = L^2(\mathbb{Z}) \otimes L^2(I_q) \otimes L^2(I_q)$  by identifying $L^2(\mathbb{Z})$ with $L^2(\mathbb{T})$ under the Fourier transform, c.f. \cite[Lemma 2.4]{KoeKus}.  
 In this representation $a_z$ corresponds to an operator in $1 \otimes B(L^2(I_q))$ and in fact the canonical basis $1 \otimes \delta_{p_0} \in L^2(\mathbb{T}) \otimes L^2(I_q)$ with $p_0 \in I_q$ forms a complete set of eigenvectors of $a_z$ with respective eigenvalues $a_z(p_0)$. 

For $i \in \{ 1, 2\}$ we define,
\[
v_i = \int^\oplus_{[-1, 1]\cup \sigma_d(\Omega_p)} g_i(x) e^{\ep_i, \eta_i}_{m_i}(p_i,x) dx \in L^2(\bG), 
\]
for some $\ep_i, \eta_i \in \{ -,+\}, p_i \in q^{\mathbb{Z}}, m_i \in \mathbb{Z}$ and  $g_i$ a square integrable function on $[-1, 1]\cup \sigma_d(\Omega_p)$. Set $x = (\id \otimes \omega_{v_1, w_2})(W)$. Then, $x \in C_0(\bG)$ and in fact the linear span of such elements $x$ forms a norm dense subset of $C_0(\bG)$ see Remark \ref{Rmk=TypeI}.

Fix $m_0 \in \mathbb{Z}$,    $t_0   \in I_q$. From \eqref{Eqn-CCoefficient} one sees that there is at most one $m_2 \in \mathbb{Z}$ and $p_2 \in q^{\mathbb{Z}}$   and $\pm$ either $+$ or $-$ such that the following function is non-zero:
\begin{equation}\label{Eqn=HelpfulCoef}
\Phi_x: I_q \rightarrow \mathbb{C}: p_0 \mapsto \langle x f_{m_0, p_0, t_0}, f_{m_0+m_2, \pm p_0p_2, t_0} \rangle.
\end{equation}
Using that the Haar weight,
\[
{\rm Tr}(\: \cdot \:) \otimes \sum_{p_0 \in I_q}  p_0^{-2} \langle \: \cdot \: \delta_{p_0}, \delta_{p_0}\rangle,
\]
restricts to a semi-finite weight on $C_0(\bG)$ we must have $\Phi_x(p_0) \rightarrow 0$ as $p_0 \rightarrow 0$, since else $x$ cannot be approximated in norm by square integrable elements of $C_0(\bG)$.

Consider the projections $P_0: L^2(I_q) \rightarrow L^2(I_q \cup (-1, 1))$ and $P_1: L^2(I_q) \rightarrow L^2(I_q \cup [1, \infty))$. We decompose $x = P_0 x + P_1 x$ where $P_0 x, P_1 x \in L^\infty(\bG)$. We need to prove that, 
\[
\Vert a_z P_0 x - P_0 x \Vert_{L^\infty(\bG)} \rightarrow 0 \textrm{ and } \Vert a_z P_1x - P_1x \Vert_{L^\infty(\bG)} \rightarrow 0.
\] 
The right convergence follows since $\Vert a_z P_1 - P_1\Vert_{L^\infty(\bG)} \rightarrow 0$ by Proposition \ref{Prop=Analytic}.  For the left convergence, it follows from \eqref{Eqn-CCoefficient} that it suffices to show that $\Phi_{a_z x - x}(p_0) \rightarrow 0$ as $z \rightarrow 1$ uniformly for $p_0 \in (-1, 1) \cap I_q$. But this follows from Proposition \ref{Prop=Analytic}, the fact that $\Phi_x(p_0)$ tends to zero as $p_0 \rightarrow 0$ and the fact that $a_z(p_0)$ is bounded in $p_0$.

\end{proof}

\section{Weak amenability}\label{Sect=WeakAmenability}

Recall that we defined weak amenability in Section \ref{Sect=CompactaApproximation}. Here, we prove that $\SUqone$ is weakly amenable.  From this point, the proof is essentially the same as \cite[Theorem 3.7]{CanHaa}. The necessary modifications to lift the arguments from groups to quantum groups are presented in this section. In particular, we use coamenability as established in Section \ref{Sect=Coamenability} to apply Theorem \ref{Thm=CompactaApproximation}. 

\vspace{0.3cm}

Let us mention that in \cite{CowHaa} it is proved  that every real rank one simple Lie group $G$ with finite center is weakly amenable with Cowling-Haagerup constant depending on the local isomorphism class of $G$. If the rank is greater than 1, $G$ is not weakly amenable \cite{Haa}. 

The most important examples of weakly amenable quantum groups come from Freslon's result \cite{Freslon}, showing that the free orthogonal and free unitary quantum groups of Kac type are weakly amenable.  

\vspace{0.3cm}

The following lemma is the quantum group analogue of \cite[Theorem 2.2]{CanHaa}. It follows directly from \cite[Corollary 4.8]{BraDawSam}. See also \cite[Proposition 4.1]{Daws}.

\begin{lem}\label{Lem=FellAbsorption}
Let $\bG$ be a locally compact quantum group. Let $U \in L^\infty(\mathbb{G}) \otimes B(\cH_U)$ be an invertible corepresentation of $\bG$. Let $\omega \in B(\cH_U)_\ast$ and set $a = (id \otimes \omega)(U)^\ast$. Then, $a \in \MCB(\bG)$. Moreover, $\Vert a \Vert_{\MCB(\bG)} \leq \Vert U \Vert \Vert U^{-1} \Vert \Vert \omega \Vert$. 
\end{lem}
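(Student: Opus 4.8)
The plan is to read this as the quantum-group incarnation of the Fell absorption / Herz--Schur (Bo\.zejko--Fendler) principle: matrix coefficients of uniformly bounded representations are completely bounded multipliers of the Fourier algebra, with cb-norm controlled by the product of the bound on the representation and on its inverse. Here the classical ``uniform bound'' is replaced by $\Vert U\Vert$ and $\Vert U^{-1}\Vert$. Since the paper's own intended route is to quote \cite[Corollary 4.8]{BraDawSam}, I would present the following as the explanation of \emph{why} that corollary applies to an invertible corepresentation.

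First I would reduce to a vector functional. Replacing $U$ by $U\otimes 1_{\ell^2}\in L^\infty(\bG)\otimes B(\cH_U\otimes\ell^2)$ --- which is again an invertible corepresentation with the same norms $\Vert U\Vert$ and $\Vert U^{-1}\Vert$ --- any normal $\omega\in B(\cH_U)_\ast$ can be written as $\omega=\omega_{\xi,\eta}$ with $\Vert\xi\Vert\,\Vert\eta\Vert=\Vert\omega\Vert$. Hence it suffices to treat $a=(\id\otimes\omega_{\xi,\eta})(U)^\ast=(\id\otimes\omega_{\eta,\xi})(U^\ast)$ and to prove $\Vert a\Vert_{\MCB(\bG)}\le\Vert U\Vert\,\Vert U^{-1}\Vert\,\Vert\xi\Vert\,\Vert\eta\Vert$.

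Then I would invoke the Gilbert-type characterization of completely bounded multipliers (the content of \cite[Corollary 4.8]{BraDawSam}, see also \cite[Proposition 4.1]{Daws}): $a\in\MCB(\bG)$ with $\Vert a\Vert_{\MCB(\bG)}\le C$ precisely when there exist a Hilbert space $\cK$ and operators $\alpha,\beta\in B(L^2(\bG),L^2(\bG)\otimes\cK)$ with $\Vert\alpha\Vert\,\Vert\beta\Vert\le C$ satisfying the intertwining relation (against $W$) that represents $a$. Constructing these data from $U$ is the crux. Writing $m_\zeta\colon L^2(\bG)\to L^2(\bG)\otimes\cH_U$, $m_\zeta(\vartheta)=\vartheta\otimes\zeta$, so that $(\id\otimes\omega_{\xi,\eta})(U)=m_\eta^\ast U m_\xi$ and thus $a=m_\xi^\ast U^\ast m_\eta$, I would take $\cK=\cH_U$ and
\[
\alpha=U\,m_\xi,\qquad \beta=(U^\ast)^{-1} m_\eta ,
\]
which give at once $\Vert\alpha\Vert\le\Vert U\Vert\,\Vert\xi\Vert$ and $\Vert\beta\Vert\le\Vert U^{-1}\Vert\,\Vert\eta\Vert$, hence the stated constant. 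The essential feature is that one intertwiner is built from $U$ and the other from $U^{-1}$; this is exactly where \emph{invertibility} of the corepresentation is used, and why the bound is $\Vert U\Vert\,\Vert U^{-1}\Vert$ rather than $\Vert U\Vert^2$.

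It then remains to check that $(\alpha,\beta)$ satisfies the defining relation of the Gilbert representation, and this is the step I expect to be the real obstacle. It is here that the corepresentation axiom does the work: the identity $(\Delta\otimes\id)(U)=U_{13}U_{23}$, equivalently $W_{12}^\ast U_{23}W_{12}=U_{13}U_{23}$, is precisely the operator identity that, after inserting the slicing maps $m_\xi$ and $m_\eta$, collapses to the required intertwining relation between $\alpha$, $\beta$ and $W$. The genuine effort is (i) the leg-numbering bookkeeping in this verification and (ii) confirming that the same data implement $a$ completely boundedly at every matrix level, so that the estimate controls the cb-norm and not merely the bare multiplier norm. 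Both are routine once the relation is set up, but they are the only non-formal parts; granting \cite[Corollary 4.8]{BraDawSam} outright, the statement is immediate.
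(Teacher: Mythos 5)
Your reading of the mechanism is right: invertibility of $U$ is what supplies the second intertwiner, and that is why the bound is $\Vert U\Vert\,\Vert U^{-1}\Vert$ rather than $\Vert U\Vert^2$. The reduction to vector functionals via $U\otimes 1_{\ell^2}$ is also fine. But your argument differs from the paper's, and as written it has a real gap. The paper does not unfold \cite[Corollary 4.8]{BraDawSam} into a Gilbert-type representation at all: it verifies the hypotheses of that corollary at the level of completely bounded representations of the convolution algebra, using the identification $\CB(L^1(\bG),B(\cH_U))\simeq (L^1(\bG)\hat{\otimes}\cT(\cH_U))^\ast\simeq L^\infty(\bG)\otimes B(\cH_U)$ to see that the slice maps $\pi:\omega\mapsto(\omega\otimes\id)(U)$ and $\check{\pi}:\omega\mapsto(\omega\otimes\id)(U^{-1})$ are completely bounded with $\Vert\pi\Vert_{\CB}=\Vert U\Vert$ and $\Vert\check{\pi}\Vert_{\CB}=\Vert U^{-1}\Vert$, and then cites the corollary, which already delivers the conclusion for coefficients of such representations. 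Under that reading of the corollary your closing remark is correct -- the lemma is then immediate -- but the Gilbert-style construction you interpose is then a sketch of the \emph{proof} of the cited corollary, not an application of it, and you cannot have it both ways.

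The gap in your version is precisely the step you flag and then defer: you never verify that $\alpha=U m_\xi$, $\beta=(U^\ast)^{-1}m_\eta$ satisfy the intertwining identity required by the representation theorem for completely bounded left multipliers. That verification is the only place the corepresentation axiom enters, so omitting it omits the entire mathematical content. Moreover, the candidate formulas are stated without derivation and the star/leg bookkeeping is delicate: starting from $W_{12}U_{13}=U_{23}W_{12}U_{23}^{-1}$, taking adjoints and slicing the third leg against $\omega_{\eta,\xi}$ (recall $a=m_\xi^\ast U^\ast m_\eta=(\id\otimes\omega_{\eta,\xi})(U^\ast)$) produces $(a\otimes 1)W^\ast=(1\otimes U^{-1}m_\xi)^\ast\,W^\ast_{12}\,(1\otimes U^\ast m_\eta)$, i.e.\ intertwiners of the form $U^\ast m_\eta$ and $U^{-1}m_\xi$ rather than the ones you wrote. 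The norms come out the same either way, but this is exactly the kind of discrepancy that the unperformed computation is there to catch; as it stands one cannot tell whether your $(\alpha,\beta)$ represent $a$ or some other coefficient. Either carry out that computation (and the matrix-level check), or take the paper's shorter route and apply \cite[Corollary 4.8]{BraDawSam} to $\pi$ and $\check{\pi}$ directly.
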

\begin{proof}
The slice maps $\pi: L^1(\bG) \rightarrow L^\infty(\bG): \omega \mapsto (\omega \otimes id)(U)$ and $\check{\pi}: L^1(\bG) \rightarrow L^\infty(\bG): \omega \mapsto (\omega \otimes id)(U^{-1})$ are completely bounded and $\Vert \pi \Vert_{\CB} = \Vert U \Vert, \Vert \check{\pi} \Vert_{\CB} = \Vert U^{-1} \Vert$. Indeed, recall from \cite[Chapter 7]{EffRua} that,
\[
\CB(L^1(\bG), B(\cH)) \simeq  (L^1(\bG) \hat{\otimes} \cT(\cH))^\ast \simeq L^\infty(\bG) \otimes B(\cH), 
\]
where $\hat{\otimes}$ is the operator space projective tensor product. Moreover, the correspondence is given such that $\pi$ corresponds to $U$ and $\check{\pi}$ to $U^{-1}$.  
 Hence, \cite[Corollary 4.8]{BraDawSam} yields that $a \in \MCB(\bG)$ with bound $\Vert a \Vert_{\MCB(\bG)} \leq \Vert U \Vert \Vert U^{-1} \Vert \Vert \omega \Vert$.
\end{proof}
\begin{cor}\label{Cor=SUqOneMultipliers}
Let $\bG = \SUqone$, $\alpha > 1$ and let  $a_z = (\id \otimes \omega_{f_z})(V_z) \in L^\infty(\bG)$ with $z \in \cG_\alpha$ be as in Proposition  \ref{Prop=Analytic}. Then, $a_z  \in \MCB(\bG)$ and $\Vert a_z  \Vert_{\MCB(\bG)} \leq \alpha^2$. 
\end{cor}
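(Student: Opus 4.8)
The plan is to read the corollary off from Lemma \ref{Lem=FellAbsorption} applied to the invertible corepresentation $V_z$, the only genuine point being that the lemma produces the \emph{adjoint} of a slice of $V_z$, whereas $a_z$ is the slice itself. First I would record the two inputs needed. Since the basis vectors $e^{+,-}_0(1,\mu(q^z))$ and $e^{-,+}_0(1,\mu(q^z))$ are orthonormal, the vector $f_z$ of \eqref{Eqn=State} is a unit vector, so $\omega_{f_z} = \omega_{f_z,f_z} \in B(\cH_z)_\ast$ is a (vector) state with $\Vert \omega_{f_z}\Vert = \Vert f_z\Vert^2 = 1$. By Proposition \ref{Prop=Analytic}\eqref{Item=AnalyticI}, for $z \in \cG_\alpha$ the operator $V_z$ is an invertible corepresentation of $\bG$ with $\Vert V_z\Vert \leq \alpha$ and $\Vert V_z^{-1}\Vert \leq \alpha$.

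Next I would apply Lemma \ref{Lem=FellAbsorption} with $U = V_z$ and $\omega = \omega_{f_z}$. This yields
\[
(\id \otimes \omega_{f_z})(V_z)^\ast = a_z^\ast \in \MCB(\bG), \qquad \Vert a_z^\ast \Vert_{\MCB(\bG)} \leq \Vert V_z\Vert\,\Vert V_z^{-1}\Vert\,\Vert\omega_{f_z}\Vert \leq \alpha^2 .
\]

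It then remains to pass from $a_z^\ast$ back to $a_z$, and this adjoint bookkeeping is the one step that requires attention, since Lemma \ref{Lem=FellAbsorption} is phrased for $(\id\otimes\omega)(U)^\ast$ rather than $(\id\otimes\omega)(U)$. The clean way is to invoke that $\MCB(\bG)$ is closed under the adjoint $b \mapsto b^\ast$ isometrically, which immediately gives $a_z \in \MCB(\bG)$ with $\Vert a_z\Vert_{\MCB(\bG)} \leq \alpha^2$. If one prefers to remain self-contained, one can instead observe that $a_z^\ast = a_{\bar z}$: recall that $a_z$ acts diagonally with eigenvalue $a_z(p_0)$ on $f_{m_0,p_0,t_0}$, that each $z \mapsto a_z(p_0)$ is analytic on $\cG_\alpha$, and that, by the reflection argument in the proof of Proposition \ref{Prop=Analytic}\eqref{Item=AnalyticI}, $a_z(p_0)$ is real on a real interval about $0$; since $\cG_\alpha$ may be chosen symmetric under $z \mapsto \bar z$ (it is a neighbourhood of the conjugation-invariant set $i\mathbb{R} \cup [0,1)$ built by Schwarz reflection), reflection gives $\overline{a_z(p_0)} = a_{\bar z}(p_0)$, i.e.\ $a_z^\ast = a_{\bar z}$. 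Applying the previous two paragraphs to $V_{\bar z}$ (legitimate because $\bar z \in \cG_\alpha$) then produces $a_z = a_{\bar z}^\ast \in \MCB(\bG)$ with $\Vert a_z\Vert_{\MCB(\bG)} \leq \alpha^2$ directly, without appeal to any external closure property.

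Everything else is immediate: the norm bound is just the product of the three stated estimates $\Vert V_z\Vert \leq \alpha$, $\Vert V_z^{-1}\Vert \leq \alpha$ and $\Vert \omega_{f_z}\Vert = 1$, so no further computation is involved and the adjoint identification above is the sole nontrivial ingredient.
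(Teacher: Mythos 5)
Your argument is correct in substance but handles the one delicate point --- that Lemma \ref{Lem=FellAbsorption} produces $(\id\otimes\omega)(U)^\ast$ rather than the slice itself --- differently from the paper. The paper's proof is a one-liner: it applies Lemma \ref{Lem=FellAbsorption} not to $V_z$ but to the \emph{contragredient} corepresentation of $V_z$ (the entrywise adjoint in the fixed orthonormal basis), which is again an invertible corepresentation with the same norms; since $f_z$ has real coordinates, slicing the contragredient by $\omega_{f_z}$ gives exactly $a_z^\ast$, so the lemma returns $a_z$ itself with the bound $\alpha^2$, at the same parameter $z$. Your second route --- establishing $a_z^\ast = a_{\bar z}$ by Schwarz reflection from the real interval where $a_z(p_0)$ is real, choosing $\cG_\alpha$ conjugation-symmetric, and then applying the lemma at $\bar z$ --- is a legitimate alternative; it needs the (easily supplied) observation that the diagonal form of $a_z$ in the basis $f_{m_0,p_0,t_0}$ persists on the analytic extension by the identity theorem, so that the operator adjoint is computed on eigenvalues. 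I would, however, warn you off your first route: the claim that $\MCB(\bG)$ (which in this paper consists of \emph{left} multipliers) is closed under $b\mapsto b^\ast$ is not established anywhere and is not a safe general fact --- taking adjoints in $b\,\hat{\lambda}(\omega)=\hat{\lambda}(\theta)$ naturally turns $b^\ast$ into a \emph{right} multiplier, and the author's own (abandoned, commented-out) draft of this proof shows that this passage is precisely the trap the contragredient trick is designed to avoid. So state the corollary via the contragredient or via your reflection identity $a_z^\ast=a_{\bar z}$, not via a closure property of $\MCB(\bG)$.
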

\begin{proof}
By Lemma \ref{Lem=FellAbsorption} applied to the contragredient corepresentation of $V_z$, we see that $a_z \in \MCB(\bG)$ and   $\Vert a_z  \Vert_{\MCB(\bG)} \leq \alpha^2$. 
\end{proof}
\begin{lem}\label{Lem=ComP} 
Let $\bG = \SUqone$, $\alpha > 1$ and let  $a_z  \in L^\infty(\bG)$ with $z \in \cG_\alpha$ be as in Proposition  \ref{Prop=Analytic}. For $z \in [0,1]$ the multiplier $a_z \in \MCB(\bG)$ is completely positive. 
\end{lem}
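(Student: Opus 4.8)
The plan is to exploit that for $z \in [0,1)$ the corepresentation carrying $a_z$ is genuinely unitary, so that $a_z$ is a diagonal matrix coefficient of a unitary corepresentation taken against a vector state; such matrix coefficients are precisely the completely positive multipliers, and the endpoint $z=1$ will be handled by a trivial degeneration. First, for $z \in [0,1)$ I would invoke Proposition \ref{Prop=Analytic}\eqref{Item=AnalyticI}, which (via \cite[Section 10.3]{GroKoeKus}) guarantees that $V_z$ is a \emph{unitary} corepresentation. Since $f_z$ is a unit vector, $\omega_{f_z} = \omega_{f_z,f_z}$ is a state on $B(\cH_z)$, and hence $a_z = (\id \otimes \omega_{f_z})(V_z)$ is a matrix coefficient of a unitary corepresentation taken against a positive normal functional.

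Second, I would identify the dual map $\Phi_{a_z} \colon L^\infty(\hat{\bG}) \to L^\infty(\hat{\bG})$ attached to the multiplier $a_z$ and exhibit the Stinespring-type factorisation
\[
\Phi_{a_z}(x) = (\id \otimes \omega_{f_z})\bigl( V_z (x \otimes 1) V_z^\ast \bigr), \qquad x \in L^\infty(\hat{\bG}),
\]
which is the quantum-group incarnation of the classical fact that $s \mapsto \langle \pi(s)\xi, \xi\rangle$ is positive definite. Because $V_z$ is unitary, $x \mapsto V_z(x \otimes 1)V_z^\ast$ is a normal, unital $\ast$-homomorphism of $L^\infty(\hat{\bG})$ into $L^\infty(\hat{\bG}) \,\overline{\otimes}\, B(\cH_z)$, hence completely positive; composing with the slice $\id \otimes \omega_{f_z}$ by the positive functional $\omega_{f_z}$, which is again completely positive, shows that $\Phi_{a_z}$ is completely positive. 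By the definition of a (completely) positive multiplier recalled in Section \ref{Sect=Lcqg}, this is exactly the assertion that $a_z$ is a completely positive multiplier for $z \in [0,1)$. For a direct reference one may alternatively cite \cite[Proposition 4.1]{Daws}, the same source used for Lemma \ref{Lem=FellAbsorption}.

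Finally, I would dispose of the endpoint $z = 1$, where $V_1$ is no longer unitary and the factorisation above is unavailable. Here Proposition \ref{Prop=Analytic}\eqref{Item=AnalyticIII} gives $\lim_{z \to 1} a_z(p_0) = 1$ for every $p_0 \in I_q$; since $a_z$ acts diagonally on the basis $f_{m_0,p_0,t_0}$ with eigenvalue $a_z(p_0)$, the analytic continuation satisfies $a_1 = 1$, the identity of $L^\infty(\bG)$, which is trivially completely positive. Equivalently, the completely positive multipliers of $\MCB(\bG)$-norm at most $\alpha^2$ form a pointwise-closed set, and $a_1$ is the pointwise limit of the completely positive multipliers $a_z$, $z \in [0,1)$.

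The main obstacle is the second step: fixing the conventions so that it is $a_z$ itself, and not its adjoint $a_z^\ast$, that is completely positive (this matches the positive-definiteness of the complementary-series spherical functions in the classical picture), and verifying that conjugation by the corepresentation indeed maps $L^\infty(\hat{\bG})$ back into $L^\infty(\hat{\bG}) \,\overline{\otimes}\, B(\cH_z)$, which is where the corepresentation identity $(\Delta \otimes \id)(V_z) = (V_z)_{13}(V_z)_{23}$ together with the commutation relations encoded in $W$ enter. Once this factorisation is in place the complete positivity is automatic, the only remaining care being the duality bookkeeping between $A(\bG)$ and $L^1(\hat{\bG})$.
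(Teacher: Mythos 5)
Your proposal is correct and rests on the same basic observation as the paper -- for $z\in[0,1]$ the corepresentation $V_z$ is unitary and $a_z=(\id\otimes\omega_{f_z})(V_z)$ with $f_z$ a unit vector -- but it closes the argument by a different route. The paper does not construct the Stinespring factorisation of $\Phi_{a_z}$ at all: it simply notes that for every $\omega\in L^1(\bG)^\sharp$ one has $(\omega^\ast\otimes\omega)(V_z)=\bigl((\omega\otimes\id)(V_z)\bigr)^\ast(\omega\otimes\id)(V_z)\geq 0$, i.e.\ that $a_z$ is a completely positive definite function, and then invokes the Bochner-type theorem of Daws--Salmi \cite[Proposition 20]{DawSal} for \emph{coamenable} quantum groups (coamenability being available from Theorem \ref{Thm=Coamenability}) to convert complete positive definiteness into complete positivity of the multiplier. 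Your route via the factorisation $\Phi_{a_z}(x)=(\id\otimes\omega_{f_z})(V_z(x\otimes 1)V_z^\ast)$ and \cite{Daws} is also legitimate, but the step you correctly flag as the main obstacle -- that conjugation by $V_z$ carries $L^\infty(\hat{\bG})\otimes 1$ into $L^\infty(\hat{\bG})\otimes B(\cH_z)$ -- is genuinely nontrivial and is precisely the content of the cited results of Daws, so you are not saving any work; the paper's route outsources the same difficulty to \cite{DawSal} but needs coamenability as an extra hypothesis, which your factorisation argument does not. Two smaller remarks: the paper treats the endpoint $z=1$ on the same footing as $[0,1)$ (asserting unitarity of $V_z$ on all of $[0,1]$ via \cite[Section 10.3]{GroKoeKus}) rather than by your limiting argument, and in any case only $z=1-1/k\in[0,1)$ is used in the weak amenability proof; and your worry about $a_z$ versus $a_z^\ast$ evaporates here because for $z\in[0,1]$ the eigenvalues $a_z(p_0)$ are real, so $a_z$ is self-adjoint.
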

\begin{proof}
Recall that $a_z = (\id \otimes \omega_{f_z})(V_z)$ and that for  $z \in [0,1]$ the corepresentation $V_{z}$ is unitary (see \cite[Section 10.3]{GroKoeKus}). For every $\omega \in L^1(\bG)^\sharp$ it follows that 
\[
(\omega^\ast \otimes \omega)(V_z) = \left((\omega \otimes  \id )(V_z)\right)^\ast (\omega \otimes  \id )(V_z ) \geq 0.
\]
 Using \cite[Proposition 20]{DawSal} for coamenable quantum groups, we see that $a_z$ is in fact completely positive.  
\end{proof}

Along the following proof we use the well-known fact that a function $f: \mathbb{C} \rightarrow \mathcal{X}$ with $\mathcal{X}$ a Banach space is norm analytic if and only if for a separating set of functionals $\beta \in \mathcal{X}^\ast$ the function $z \mapsto \langle f, \beta \rangle_{\mathcal{X}, \mathcal{X}^\ast}$ is analytic. 

\begin{thm}
Let $\bG =  SU_q(1,1)_{{\rm ext}}$. There exists a net $\{ a_i \}$ in $A(\bG)$ such that for every $b \in A(\bG)$ we have $\Vert a_i b - b \Vert_{A(\bG)} \rightarrow 0$. Moreover, the net can be choosen such that,
\[
\limsup_{i \in I} \Vert a_i \Vert_{\MCB(\mathbb{G})} = 1.
\]
That is, $\bG$ is weakly amenable with Cowling-Haagerup constant $\Lambda(\bG) = 1$. 
\end{thm}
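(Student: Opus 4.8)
The plan is to verify the hypotheses of Theorem \ref{Thm=CompactaApproximation} for the spherical functions $a_z$ and then apply it. Coamenability of $\bG = \SUone$ is already available from Theorem \ref{Thm=Coamenability} (Theorem A), so all that remains is to exhibit a suitable family of positive modular multipliers forming an approximate identity for $C_0(\bG)$ with controlled completely bounded norm. I would take the family $\{a_z : z \in (0,1)\}$ directed by $z \to 1$, where $a_z = (\id \otimes \omega_{f_z})(V_z)$ is the matrix coefficient \eqref{Eqn=AzFunction} and $f_z$ is the unit vector \eqref{Eqn=State}.

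First I would collect the required properties from Sections \ref{Sect=Spherical} and \ref{Sect=WeakAmenability}. For $z \in (0,1)$ the corepresentation $V_z$ is unitary (as recorded in the proof of Proposition \ref{Prop=Analytic}), so Lemma \ref{Lem=FellAbsorption}, applied to $V_z$ with the unit vector $f_z$, gives $a_z \in \MCB(\bG)$ with $\Vert a_z \Vert_{\MCB(\bG)} \leq \Vert V_z \Vert \, \Vert V_z^{-1} \Vert = 1$; alternatively Corollary \ref{Cor=SUqOneMultipliers} gives the bound $\alpha^2$ on $\cG_\alpha$ and one lets $\alpha \downarrow 1$. By Lemma \ref{Lem=ComP} the multiplier $a_z$ is completely positive, hence positive in the sense required. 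For modularity I would check $\tau_t(a_z) = a_z$: since $a_z$ is diagonal in the canonical basis $f_{m_0,p_0,t_0}$ with eigenvalue $a_z(p_0)$ depending only on $p_0$, a direct computation from the explicit action of $\tau_t(\,\cdot\,) = \hat{\nabla}^{it}\,\cdot\,\hat{\nabla}^{-it}$ on $L^\infty(\bG) = L^\infty(\mathbb{T}) \otimes B(L^2(I_q))$ shows that $a_z$ is $\tau$-invariant. Finally, Theorem \ref{Thm=ApproxSU} gives precisely that $\Vert a_z c - c \Vert_{L^\infty(\bG)} \to 0$ as $z \to 1$ for every $c \in C_0(\bG)$.

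With these four facts in hand, Theorem \ref{Thm=CompactaApproximation} applies to the net $\{a_z\}_{z \to 1}$ and yields $\Vert a_z c - c \Vert_{A(\bG)} \to 0$ for every $c \in A(\bG)$, together with the estimate $\Lambda(\bG) \leq \limsup_{z \to 1} \Vert a_z \Vert_{\MCB(\bG)} \leq 1$. For the reverse inequality I would use the standard argument: for any admissible net $\{b_i\}$ and any fixed $c \neq 0$ we have $\Vert b_i c \Vert_{A(\bG)} \leq \Vert b_i \Vert_{\MCB(\bG)} \Vert c \Vert_{A(\bG)}$, while $\Vert b_i c \Vert_{A(\bG)} \to \Vert c \Vert_{A(\bG)}$, whence $\limsup_i \Vert b_i \Vert_{\MCB(\bG)} \geq 1$ and so $\Lambda(\bG) \geq 1$. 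Combining the two bounds gives $\Lambda(\bG) = 1$, and the witnessing net can be arranged so that $\limsup \Vert a_i \Vert_{\MCB(\bG)} = 1$, if necessary by refining the index set over pairs (finite subset of $A(\bG)$, tolerance).

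The main obstacle is conceptual rather than computational. The $a_z$ are genuinely only completely bounded \emph{multipliers} and do \emph{not} lie in the Fourier algebra $A(\bG)$ itself: since $a_z(p_0) \to 1$ as $p_0 \to \infty$ (Proposition \ref{Prop=Analytic}\eqref{Item=AnalyticIII}), we have $a_z \notin C_0(\bG) \supseteq A(\bG)$, reflecting that the corepresentations $V_z$ for $z \in (0,1)$ are not weakly contained in $W$. The whole point is therefore to pass from an $\MCB(\bG)$-bounded approximate identity of $C_0(\bG)$ in the $L^\infty$-norm to an approximate identity in the $A(\bG)$-norm, and this passage is exactly what Theorem \ref{Thm=CompactaApproximation} supplies; it is here that coamenability enters (through Lemma \ref{Lem=L1Approx}) and the modular hypothesis enters (through Lemma \ref{Lem=Technical}). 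Once the four hypotheses above are verified, that theorem does the substantive work, and the only genuinely new points are the unitarity of $V_z$ on $(0,1)$, the modularity of $a_z$, and the elementary lower bound $\Lambda(\bG) \geq 1$.
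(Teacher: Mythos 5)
Your verification of the hypotheses of Theorem \ref{Thm=CompactaApproximation} matches the paper's: unitarity of $V_z$ for real $z\in[0,1)$ giving $\Vert a_z\Vert_{\MCB(\bG)}\le 1$ via Lemma \ref{Lem=FellAbsorption}/Corollary \ref{Cor=SUqOneMultipliers}, complete positivity via Lemma \ref{Lem=ComP}, modularity from the diagonal form of $a_z$, the $C_0(\bG)$-approximation from Theorem \ref{Thm=ApproxSU}, and coamenability from Theorem \ref{Thm=Coamenability}; the elementary lower bound $\Lambda(\bG)\ge 1$ is also fine. But there is a genuine gap at the decisive step. You correctly observe that $a_z\notin A(\bG)$ for real $z$ (since $a_z(p_0)\to 1$ as $p_0\to\infty$, the complementary series $V_z$ not being weakly contained in $W$), and you then assert that the passage from this defect to an honest net in $A(\bG)$ ``is exactly what Theorem \ref{Thm=CompactaApproximation} supplies.'' It is not. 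That theorem takes as hypothesis a net that is \emph{already} in $A(\bG)$ and upgrades convergence $b_kc\to c$ in $C_0(\bG)$ to convergence in the $A(\bG)$-norm. Applied to $\{a_z\}_{z\in(0,1)}$ (its proof does not really use membership in $A(\bG)$, only the multiplier properties), it would yield $\Vert a_zc-c\Vert_{A(\bG)}\to 0$ with $a_z\in\MCB(\bG)\setminus A(\bG)$ --- which does not witness weak amenability as defined in Section \ref{Sect=CompactaApproximation}, and does not prove the statement, which explicitly demands a net \emph{in} $A(\bG)$ with $\limsup\Vert a_i\Vert_{\MCB(\bG)}=1$.

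The missing ingredient is the De Canniere--Haagerup contour-shift construction, which occupies most of the paper's actual proof. One sets $b_{k,n}=\sqrt{n/\pi}\int_{\gamma_k}e^{-n(z-1+1/k)^2}a_z\,dz$ (a Bochner integral in $\MCB(\bG)$) for a path $\gamma_k$ through $1-\tfrac1k$ staying in $\cG_\alpha$ at bounded distance from the imaginary axis $\gamma_0$. By Cauchy's theorem the contour can be shifted to $\gamma_0$, where the $V_z$ are principal series corepresentations occurring in the Plancherel decomposition \eqref{Eqn=Plancherel} of $W$; the Gaussian-smeared integral over that continuous family \emph{does} lie in $A(\bG)$, even though no individual $a_z$ does. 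On the other hand, as $n\to\infty$ the Gaussian concentrates at $1-\tfrac1k$ and $\Vert b_{k,n}-a_{1-1/k}\Vert_{\MCB(\bG)}\to 0$, hence $\Vert b_{k,n}c-a_{1-1/k}c\Vert_{A(\bG)}\to 0$ for each $c\in A(\bG)$. Combining this with the conclusion of Theorem \ref{Thm=CompactaApproximation} for the multipliers $a_{1-1/k}$ and a diagonal argument over $(k,n)$ produces the required net in $A(\bG)$ with $\MCB$-norms bounded by $\alpha^2$ for arbitrary $\alpha>1$. Without this analytic-continuation step your argument establishes only that $\{a_z\}$ is an $\MCB$-bounded approximate identity of multipliers, not weak amenability.
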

\begin{proof}[Proof (following Theorem 3.7 of \cite{CanHaa}).]
Let $\alpha > 1$ and let $\cG_\alpha$ and $a_z$ be as in Proposition \ref{Prop=Analytic}. By Corollary \ref{Cor=SUqOneMultipliers}   we find that $a_z  \in \MCB(\bG)$ with  $\Vert a_z  \Vert_{\MCB(\bG)} \leq  \alpha^2$. 
 
Define a path $\gamma_0: \mathbb{R} \rightarrow \cG_\alpha: s \mapsto is$. For $k \in \mathbb{N}$, we fix a (continuous) path $\gamma_k: \mathbb{R} \rightarrow \cG_\alpha$ that satisfies the following two criteria: (1) $\gamma_k(0) = 1- \frac{1}{k}$, (2) $\vert \gamma_1(s) - \gamma_0(s) \vert < 37$ for all $s \in \mathbb{R}$.  
Define,
\[
b_{k,n} = \sqrt{\frac{n}{\pi}} \int_{\gamma_k}  e^{-n (z-1+\frac{1}{k})^2} a_{z} dz.
\]
We claim that the integral exists as a Bochner integral in $\MCB(\bG)$. Indeed, for $\omega \in L^1(\bG)$ we see that $z \mapsto e^{-n (z-1+\frac{1}{k})^2}\omega( a_{z} )$ is analytic. Using \cite[Theorem 3.4]{HuNeuRua} (but then for left multipliers), this shows that  $z \mapsto e^{-n (z-1+\frac{1}{k})^2}  a_{z}  $ is analytic in $\MCB(\bG)$. Moreover, $b_{k,0} \in A(\bG)$. We also set,
\[
b_k = a_{1-1/k}, \qquad k \in \mathbb{N}.
\]
Recall that $\Vert a_z \Vert_{\MCB(\bG)} \leq  \alpha^2$ for every $z \in \cG$. It follows from Cauchy's theorem and assertion (2) on our path $\gamma_k$ that,
\begin{equation}\label{Eqn=CauchyTheorem}
b_{k,n} = \sqrt{\frac{n}{\pi}} \int_{\gamma_k}  e^{-n (z-1+\frac{1}{k})^2} a_{z} dz  = \sqrt{\frac{n}{\pi}} \int_{\gamma_0}  e^{-n (z-1+\frac{1}{k})^2} a_{z} dz = b_{k,0} \in A(\bG),
\end{equation}
where the integrals are understood within $\MCB(\bG)$. Here, the second equality of \eqref{Eqn=CauchyTheorem} is explained in full detail in \cite[p. 482]{CanHaa} and we leave the mutatis mutandis copy to the reader. Also, for every $k \in \mathbb{N}$, we find that as $n \rightarrow \infty$,
\[
\begin{split}
&\Vert b_{k,n} - b_k \Vert_{\MCB(\bG)} \\
= &
\Vert \sqrt{\frac{n}{\pi}} \int_{\gamma_k} e^{-n(z-1+\frac{1}{k})^2} a_z dz - a_{1-1/k} \Vert_{\MCB(\bG)} \\
\leq & \Vert  \sqrt{\frac{n}{\pi}} \int_{\gamma_k} e^{-n(z-1+\frac{1}{k})^2} a_z - a_{1-1/k}  dz  \Vert_{\MCB(\bG)} \\ & \: +\:   
\Vert  \left( \sqrt{\frac{n}{\pi}} \int_{\gamma_k} e^{-n(z-1+\frac{1}{k})^2}   dz -1 \right)a_{1-1/k}  \Vert_{\MCB(\bG)} \\
\leq  & \sqrt{\frac{n}{\pi}} \int_{\gamma_k}  \vert e^{-n (z-1+\frac{1}{k})^2}\vert\: \Vert a_{z} -a_{1-1/k} \Vert_{\MCB(\bG)}  dz \\ & \: +\: 
 \vert \sqrt{\frac{n}{\pi}} \int_{\gamma_k} e^{-n(z-1+\frac{1}{k})^2}   dz -1 \vert \Vert a_{1-1/k}  \Vert_{\MCB(\bG)} \\
\rightarrow &0.
\end{split}
\]
This implies that for every $c \in A(\bG)$ as $n \rightarrow \infty$,
\[
\Vert b_{k,n}  c - b_k c \Vert_{A(\bG)} \rightarrow 0. 
\]
Using Theorem  \ref{Thm=ApproxSU}  we see furthermore that for every $c \in C_0(\bG)$ we have,
\[
\Vert  b_k  c - c\Vert_{A(\bG)} \rightarrow 0. 
\]
Note that $b_k$ is a positive multiplier by Lemma \ref{Lem=ComP} and Theorem \ref{Thm=Coamenability} shows that $\bG$ is coamenable. Theorem \ref{Thm=CompactaApproximation} then concludes our proof. Since we could choose $\alpha>1$ arbitrary,  and $\Vert a_z \Vert_{\MCB(\bG)} \leq \alpha^2$, we find that $\Lambda(\bG) \leq 1$.  
\end{proof}

\section{Haagerup property}\label{Sect=Haagerup} 

Recently, the Haagerup property was introduced conceptually for locally compact quantum groups by Daws, Fima, Skalski and White, see \cite{DawFimSkaWhi}. Let us state the  definition that is most convenient for us. See \cite[Theorem 5.5.(iii)]{DawFimSkaWhi}. 

\begin{dfn}
Let $\bG$ be a locally compact quantum group.   $\bG$ has the {\it Haagerup property} if there exists a net of states $\{\mu_i \}$ on $C^\ast_u(\bG)$ such that $a_i := (\id \otimes \mu_i)(\mathcal{V}) \in L^\infty(\bG)$ satisfies the property that for every $c \in C_0(\bG)$ we have $\Vert a_i c - c \Vert_{L^\infty(\bG)} \rightarrow 0$.   
\end{dfn}

Let us comment on the existing examples. Firstly, we recall the following proposition from \cite[Proposition 5.2]{DawFimSkaWhi}. 

\begin{prop}\label{Prop=ImpliesHaagerup}
Let $\bG$ be a locally compact quantum group. If $\bG$ is coamenble, then $\hat{\bG}$ has the Haagerup property.
\end{prop}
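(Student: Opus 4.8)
The plan is to exhibit the Haagerup multipliers for $\hat{\bG}$ as matrix coefficients of the regular corepresentation $\hat{W} = \Sigma W^\ast \Sigma$ of $\hat{\bG}$, evaluated against a bounded approximate identity of \emph{states} for the convolution algebra $L^1(\bG)$. Coamenability of $\bG$ is precisely what manufactures such an approximate identity, and the comultiplication identity for $\hat{W}$ is what converts the operator products $\hat a_i c$ into convolutions, turning the whole statement into a soft estimate.

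First I would record the input from coamenability. By Lemma \ref{Lem=CoamenabilityEquivalence} together with \cite{BedTus}, the convolution algebra $(L^1(\bG), \ast)$ admits a contractive two-sided approximate identity consisting of states; since $L^\infty(\bG)$ sits in standard form on $L^2(\bG)$, these states can be realised as vector functionals $\omega_{\xi_i}$ with $\xi_i \in L^2(\bG)$ unit vectors. In particular $\theta \ast \omega_{\xi_i} \to \theta$ in $L^1(\bG)$ for every $\theta \in L^1(\bG)$. Next I would pass to the universal picture to match the definition: the regular corepresentation $\hat{W}$ is the reduction $(\id \otimes \Lambda)(\hat{\mathcal{V}})$ of the universal multiplicative unitary $\hat{\mathcal{V}} \in \mathcal{M}(C_0(\hat{\bG}) \otimes C^\ast_u(\hat{\bG}))$ along the reducing morphism $\Lambda \colon C^\ast_u(\hat{\bG}) \to C_0(\bG)$. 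Setting $\hat{\mu}_i := \omega_{\xi_i} \circ \Lambda$, a state on $C^\ast_u(\hat{\bG})$, one gets
\[
\hat a_i := (\id \otimes \hat{\mu}_i)(\hat{\mathcal{V}}) = (\id \otimes \omega_{\xi_i})(\hat{W}) \in L^\infty(\hat{\bG}),
\]
which is the candidate Haagerup multiplier, uniformly bounded since $\Vert \hat a_i \Vert_{L^\infty(\hat{\bG})} \leq \Vert \omega_{\xi_i} \Vert = 1$.

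For the convergence, I would argue as follows. By uniform boundedness and a $3\varepsilon$-argument it suffices to prove $\Vert \hat a_i c - c \Vert_{L^\infty(\hat{\bG})} \to 0$ for $c$ in the norm-dense set of elements $c = (\id \otimes \theta)(\hat{W})$, $\theta \in L^1(\bG)$. Using the standard pentagonal relation $(\id \otimes \Delta)(\hat{W}) = \hat{W}_{13}\hat{W}_{12}$ (which expresses exactly that slicing the last leg against $L^1(\bG)$ sends operator products to the convolution product), one computes $\hat a_i c = (\id \otimes (\theta \ast \omega_{\xi_i}))(\hat{W})$, and hence, by contractivity of $\rho \mapsto (\id \otimes \rho)(\hat{W})$,
\[
\Vert \hat a_i c - c \Vert_{L^\infty(\hat{\bG})} = \Vert (\id \otimes (\theta \ast \omega_{\xi_i} - \theta))(\hat{W}) \Vert_{L^\infty(\hat{\bG})} \leq \Vert \theta \ast \omega_{\xi_i} - \theta \Vert_{L^1(\bG)} \longrightarrow 0,
\]
where the last convergence is the approximate identity property. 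This verifies the defining condition for the Haagerup property of $\hat{\bG}$.

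I expect the main obstacle to be bookkeeping rather than analysis: one must pin down which leg of which universal multiplicative unitary to slice so that the abstract requirement ``$\hat a_i = (\id \otimes \hat{\mu}_i)(\hat{\mathcal{V}})$ with $\hat{\mu}_i$ a state on $C^\ast_u(\hat{\bG})$'' agrees with the concrete regular-representation coefficients $(\id \otimes \omega_{\xi_i})(\hat{W})$, and one must invoke \cite{BedTus} carefully to guarantee that the approximate identity can be chosen among states (so that the $\hat{\mu}_i$ are genuine states and the $\hat a_i$ are uniformly contractive). Once the reduction $(\id \otimes \Lambda)(\hat{\mathcal{V}}) = \hat{W}$ and the convolution identity are in place, the estimate itself is immediate.
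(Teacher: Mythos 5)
Your argument is correct, and it is essentially the standard proof: the paper itself does not prove this proposition but cites \cite[Proposition 5.2]{DawFimSkaWhi}, whose argument is exactly this one — turn the bounded approximate identity of normal states supplied by coamenability (Lemma \ref{Lem=CoamenabilityEquivalence} and \cite{BedTus}) into states $\omega_{\xi_i}\circ\Lambda$ on $C^\ast_u(\hat{\bG})$ via the reducing morphism, and use $(\id\otimes\Delta)(\hat{W})=\hat{W}_{13}\hat{W}_{12}$ to reduce the multiplier convergence to the approximate identity property in $L^1(\bG)$. The only point worth making explicit is that the approximate identity must act on the correct side ($\theta\ast\omega_{\xi_i}\to\theta$), which does follow from \cite{BedTus} since the counit is a two-sided identity, so your appeal to a two-sided bounded approximate identity of states is justified.
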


The question whether amenability of $\bG$ implies Haagerup property remains open \cite[Remark 5.3]{DawFimSkaWhi}. 

Examples of quantum groups with the Haagerup property were  found amongst the amenable and coamenable quantum groups. These include quantum $E(2)$, quantum $ax+b$ and quantum $az+b$ and their duals. See \cite[Example 5.4]{DawFimSkaWhi} and references given there.   

Non-amenable examples so far come from discrete quantum groups. For the free orthogonal and free unitary quantum groups of Kac type, the Haagerup property was proved by Brannan \cite{Bran}. For   quantum reflexion groups, the Haagerup property was found by Lemeux \cite{Lem}. 

As a consequence of what we have proved  so far, we see that $\SUone$ is a non-compact, non-amenable quantum group that has the Haagerup property.

\begin{thm}\label{Thm=Haagerup}
Let $\bG = \SUone$. $\bG$ and $\hat{\bG}$ have both the Haagerup property.  
\end{thm}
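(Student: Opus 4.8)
The plan is to split the statement into its two halves, because the assertion for the dual comes essentially for free from Section~\ref{Sect=Coamenability}, whereas the assertion for $\bG$ itself must be fed by the spherical multipliers built in Sections~\ref{Sect=Spherical}--\ref{Sect=WeakAmenability}. For $\hat{\bG}$ I would argue in one line: by Theorem~\ref{Thm=Coamenability} the quantum group $\bG=\SUone$ is coamenable, so Proposition~\ref{Prop=ImpliesHaagerup} immediately yields that $\hat{\bG}$ has the Haagerup property. No further work is needed there.

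For $\bG$ the idea is to realise the completely positive multipliers $a_z$ as slices of the universal multiplicative unitary by states, which is exactly what the definition of the Haagerup property asks for. First I would fix $z\in[0,1)$, where $V_z$ is a genuine unitary corepresentation of $\bG$ by \cite[Section 10.3]{GroKoeKus}. By the universal property of $C^\ast_u(\bG)$ from \cite{KusUniv} --- and this is where coamenability is used, since it guarantees $\mathcal{V}\in\mathcal{M}(C_0(\bG)\otimes C^\ast_u(\bG))$ --- there is a non-degenerate representation $\pi_z\colon C^\ast_u(\bG)\to B(\cH_z)$ with $(\id\otimes\pi_z)(\mathcal{V})=V_z$. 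Since $f_z$ from \eqref{Eqn=State} is a unit vector, $\mu_z:=\omega_{f_z}\circ\pi_z$ is a state on $C^\ast_u(\bG)$, and then
\[
(\id\otimes\mu_z)(\mathcal{V})=(\id\otimes\omega_{f_z})(V_z)=a_z .
\]
By Lemma~\ref{Lem=ComP} each $a_z$ ($z\in[0,1)$) is completely positive, and by Theorem~\ref{Thm=ApproxSU} the net $(a_z)$, directed by $z\to1$, satisfies $\Vert a_z c-c\Vert_{L^\infty(\bG)}\to0$ for every $c\in C_0(\bG)$. Thus the states $\{\mu_z\}$ provide precisely the approximating net demanded by the Haagerup property, and the conclusion is then read off from \cite[Theorem 5.5]{DawFimSkaWhi}. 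The passage from a completely positive multiplier to a state on $C^\ast_u(\bG)$ underlying the displayed identity is the coamenable case of \cite[Proposition 20]{DawSal}, already invoked in Lemma~\ref{Lem=ComP}.

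The hard part will be the exact matching with the Daws--Fima--Skalski--White characterisation, specifically its properness (``mixing'') content. The functions $a_z$ do not themselves belong to $C_0(\bG)$: by Proposition~\ref{Prop=Analytic}\eqref{Item=AnalyticIII} one has $a_z\to1$ uniformly on $I_q\cap[1,\infty)$, so for $z$ near $1$ the eigenvalues $a_z(p_0)$ stay bounded away from $0$ as $p_0\to\infty$, and $a_z$ is a multiplier in $\mathcal{M}(C_0(\bG))$ rather than an element of $C_0(\bG)$. The decisive point is therefore to control $a_z$ at the genuine ``cusp'' $p_0\to0$ and to verify that the slices $(\id\otimes\mu_z)(\mathcal{V})$ still meet the approximation condition of \cite[Theorem 5.5]{DawFimSkaWhi}; this is exactly where the asymptotics of the little $q$-Jacobi functions behind Proposition~\ref{Prop=Analytic} and Theorem~\ref{Thm=ApproxSU} are indispensable. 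Everything else is a formal consequence of coamenability together with the universal property of $C^\ast_u(\bG)$, so I expect this analytic verification to be the only genuinely substantive step in the argument for $\bG$.
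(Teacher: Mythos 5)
Your argument is correct and follows the paper's proof essentially verbatim: the dual is handled exactly as you say, by Theorem \ref{Thm=Coamenability} together with Proposition \ref{Prop=ImpliesHaagerup}, and for $\bG$ itself the paper likewise produces the states $\mu_z=\omega_{f_z}\circ\pi_z$ from the non-degenerate representation $\pi_z$ of $C^\ast_u(\bG)$ attached to the unitary corepresentation $V_z$ by \cite[Proposition 5.3]{KusUniv} (the paper routes $\pi_z$ through $\mathcal{M}(C_z)$, with $C_z$ the C$^\ast$-algebra generated by the slices of $V_z$, and uses non-degeneracy plus Cohen factorization to see $\Vert\mu_z\Vert=1$ --- the same normalization argument you sketch). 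The only place you diverge is your closing paragraph: the ``properness'' verification you anticipate is not required by the definition the paper adopts from \cite[Theorem 5.5.(iii)]{DawFimSkaWhi}, which asks only that the net $a_i=(\id\otimes\mu_i)(\mathcal{V})$ satisfy $\Vert a_i c - c\Vert_{L^\infty(\bG)}\to 0$ for every $c\in C_0(\bG)$ --- precisely what Theorem \ref{Thm=ApproxSU} delivers --- and no membership of $a_z$ in $C_0(\bG)$ is asserted or used anywhere in the paper's proof. So, relative to the definition in force, your proof is already complete once Theorem \ref{Thm=ApproxSU} is invoked, and the extra analytic step you expect to be ``the only genuinely substantive one'' does not appear in the paper.
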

\begin{proof}
$\hat{\bG}$ has the Haagerup property since $\bG$ is coamenable, c.f. Theorem \ref{Thm=Coamenability} and Proposition \ref{Prop=ImpliesHaagerup}.

Recall from \eqref{Prop=Analytic} that for $z \in [0,1)$ there exists a  unitary corepresentation of $\bG$, 
\[
V_z \in L^\infty(\bG) \otimes B(\cH_{z}).
\]  
Let $C_z$ be the the  C$^\ast$-algebra generated by $V_z$. It is the norm closure of the space spanned by slices $(\omega \otimes \id)(V_z)$ with $\omega \in L^1(\bG)$. 
By \cite[Proposition 5.3]{KusUniv}, there exists a non-degenerate $\ast$-homomorphism,
 \[
\pi_z: C^\ast_u(\bG) \rightarrow \mathcal{M}(C_z) \quad \textrm{ such that }  \quad V_z = (\id \otimes \pi_z)(\mathcal{V}).
\]
Here, $\mathcal{M}(C_z)$ is the multiplier algebra of $C_z$. Let the unit vector $f_z \in \cH_{z}$ be as in \eqref{Eqn=State} and set $a_z = (\id \otimes \omega_{f_z})(V_z)$ as in \eqref{Eqn=AzFunction}.  Using the fact that $C_z$ acts non-degenerately on $\cH_z$, it follows that $\omega_{f_z}$ is a state on $C_z$. Since $\pi_z$ is  non-degenerate, it preserves bounded approximate identities. For a state $\omega$ on a C$^\ast$-algebra $C$, it follows from Cohen's factorization theorem that $\Vert \omega \Vert = \omega(1) = \lim_j \omega(e_j)$, $\{ e_j\}$ being a bounded approximate identity of $C$ (here $\omega(1)$ is interpreted in the multiplier algebra of $C$). Hence, $\mu_z = \omega_{f_z} \circ \pi_z$ is a state on $C^\ast_u(\bG)$ and $a_z = (\id \otimes \mu_z)(\mathcal{V})$. Theorem \ref{Thm=ApproxSU}  proves that $\Vert a_z c - c\Vert_{L^\infty(\bG)} \rightarrow 1$ for every $c \in C_0(\bG)$ as $z \rightarrow 1$ (limit over the domain $[0,1)$).

\end{proof}

\appendix

\section{}

We prove the necessary technical results, which we have not found explicitly in the literature. Firstly, we have the following reformulation of a result of \cite{KusVae}. 

\begin{lem}[Proposition 8.9 of \cite{KusVae}]\label{Lem=ModularAppendix}
For every $t \in \mathbb{R}$ we have,
\[  
(\sigma_t \otimes \id)(W) = (\id \otimes \hat{\tau}_{-t})(W) (1 \otimes     \hat{\delta}^{-it} ) \quad {\rm and } \quad
(\id \otimes \hat{\sigma}_t)(W) =  (\delta^{it}  \otimes 1) (\tau_{-t} \otimes \id)(W).   
\]
\end{lem}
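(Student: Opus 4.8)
The plan is to prove the first identity for an arbitrary locally compact quantum group and then deduce the second from it by Pontrjagin duality. The starting point is to strip both statements down to bare operator identities in $B(L^2(\bG)\otimes L^2(\bG))$ using the implementations recorded in the table of Section~\ref{Sect=Lcqg}: $\sigma_t=\mathrm{Ad}\,\nabla^{it}$ and $\hat\sigma_t=\mathrm{Ad}\,\hat\nabla^{it}$, while $\tau_t=\mathrm{Ad}\,\hat\nabla^{it}$ on $L^\infty(\bG)$ and, dualizing this, $\hat\tau_t=\mathrm{Ad}\,\nabla^{it}$ on $L^\infty(\hat\bG)$. Substituting these and cancelling the outer factors, the first identity becomes equivalent to
\[
W^\ast(\nabla^{it}\otimes\nabla^{it})W=\nabla^{it}\otimes\nabla^{it}\hat\delta^{-it}.
\]

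To prove this I would use the table relation $\nabla^{it}=\hat P^{it}J\hat\delta^{it}J$. Since $\hat J\delta\hat J=\delta^{-1}$ dualizes to $J\hat\delta J=\hat\delta^{-1}$ and $J$ is antiunitary, one gets $J\hat\delta^{it}J=\hat\delta^{it}$, and with $\hat P=P$ this collapses to the decomposition $\nabla^{it}=P^{it}\hat\delta^{it}$. Here $P$ commutes with $\hat\delta$ (because $\hat\tau_t(\hat\delta^{is})=\hat\delta^{is}$, the dual of $\tau_t(\delta^{is})=\delta^{is}$, which follows from $\hat\nabla$ commuting with $\delta$), so $\nabla^{it}\otimes\nabla^{it}=(P^{it}\otimes P^{it})(\hat\delta^{it}\otimes\hat\delta^{it})$. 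The relation $(\tau_t\otimes\hat\tau_t)(W)=W$, with both scaling groups implemented by $P=\hat P$, says exactly that $W$ commutes with $P^{it}\otimes P^{it}$; pulling this factor through $W^\ast$ reduces the desired identity to
\[
W^\ast(\hat\delta^{it}\otimes\hat\delta^{it})W=\hat\delta^{it}\otimes 1.
\]
This last identity is nothing but the group-likeness of the dual modular element, $\hat\Delta(\hat\delta^{it})=\hat\delta^{it}\otimes\hat\delta^{it}$, which upon writing $\hat\Delta(\,\cdot\,)=\hat W^\ast(1\otimes\,\cdot\,)\hat W$ with $\hat W=\Sigma W^\ast\Sigma$ becomes $W(\hat\delta^{it}\otimes 1)W^\ast=\hat\delta^{it}\otimes\hat\delta^{it}$. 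Running the chain back yields $W^\ast(\nabla^{it}\otimes\nabla^{it})W=\nabla^{it}\otimes P^{it}=\nabla^{it}\otimes\nabla^{it}\hat\delta^{-it}$, as required.

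For the second identity I would apply the now-proved first identity to the dual quantum group $\hat\bG$ (so that $\hat\sigma$, $\tau$, $\delta$ play the roles of $\sigma$, $\hat\tau$, $\hat\delta$) and substitute $\hat W=\Sigma W^\ast\Sigma$. Conjugation by $\Sigma$ interchanges the two legs, so after substitution the identity reads $\Sigma[(\id\otimes\hat\sigma_t)(W)]^\ast\Sigma=\Sigma[(\tau_{-t}\otimes\id)(W)]^\ast(\delta^{-it}\otimes 1)\Sigma$; removing the flips and taking adjoints produces precisely $(\id\otimes\hat\sigma_t)(W)=(\delta^{it}\otimes 1)(\tau_{-t}\otimes\id)(W)$. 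The main obstacle is the reduction in the middle paragraph: correctly isolating inside $\nabla^{it}$ the scaling part $P^{it}$ (which $W$ does not see) from the modular part $\hat\delta^{it}$, and verifying that no power of the scaling constant $\nu$ survives. The clean decomposition $\nabla^{it}=P^{it}\hat\delta^{it}$ together with $[P,\hat\delta]=0$ is exactly what makes the bookkeeping collapse, whereas a naive attempt to express $\nabla^{it}$ through $\delta$ rather than $\hat\delta$ introduces genuine $\nu$-twists (from $\nabla^{is}\delta^{it}=\nu^{ist}\delta^{it}\nabla^{is}$) and does not close.
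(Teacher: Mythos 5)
Your proof is correct, but it takes a genuinely different route from the paper, which treats this lemma as a reformulation of \cite[Proposition 8.9]{KusVae} and does not rederive it; a direct verification in the spirit of that reference would conjugate $W$ by $\nabla^{it}\otimes 1$ and shuttle the resulting $\hat{\delta}$-factor from the first leg to the second by passing through $W^\ast$ via $(R\otimes\hat{R})(W)=W^\ast$, i.e.\ via the antiunitaries $J,\hat{J}$. You avoid the antiunitaries altogether: the observation that $J\hat{\delta}J=\hat{\delta}^{-1}$ combined with antilinearity forces $J\hat{\delta}^{it}J=\hat{\delta}^{it}$, so the table relation collapses to the clean decomposition $\nabla^{it}=P^{it}\hat{\delta}^{it}$ with $[P,\hat{\delta}]=0$, and the whole lemma reduces to the two structural facts $(P^{it}\otimes P^{it})W=W(P^{it}\otimes P^{it})$ and $W^\ast(\hat{\delta}^{it}\otimes\hat{\delta}^{it})W=\hat{\delta}^{it}\otimes 1$. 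I checked your two reductions (the rearrangement to $W^\ast(\nabla^{it}\otimes\nabla^{it})W=\nabla^{it}\otimes\nabla^{it}\hat{\delta}^{-it}$ and the $\Sigma$-duality step for the second identity) and they are sound; the decomposition $\nabla^{it}=P^{it}\hat{\delta}^{it}$ is indeed correct and can be sanity-checked on the dual of a non-unimodular group, where $\nabla^{it}$ and $\hat{\delta}^{it}$ are both multiplication by $\Delta_G^{it}$ and $P=1$. Two points deserve explicit citation rather than bare assertion: the equality $P=\hat{P}$ and the group-likeness $\hat{\Delta}(\hat{\delta}^{it})=\hat{\delta}^{it}\otimes\hat{\delta}^{it}$ are both standard results of Kustermans and Vaes but appear nowhere in the paper's table of relations. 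Your closing remark is apt: decomposing $\nabla$ through $\hat{\delta}$ rather than $\delta$ is exactly what makes every power of the scaling constant $\nu$ cancel, which is the advantage your route buys over a naive manipulation of the commutation relations.
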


\begin{lem}\label{Lem=TomitaTransform}
Let $L^1(\bG)^\natural$ be the set of $\omega \in L^1(\bG)$ such that the following inclusions hold,
\[
(1) \:\: \omega \in L^1(\bG)^\sharp, \quad (2)  \:\:\omega \in \cI, \quad (3)\:\: \omega^\ast \in \cI.
\]
Let $L^1(\bG)^\flat$ be the set of all $\omega \in L^1(\bG)^\natural$ such that   $\lambda(\omega) \in \cT_{\hat{\varphi}}$ and for every $z \in \mathbb{C}$ there exists a functional $\omega_{[z]} \in L^1(\bG)^\natural$ with $\lambda(\omega_{[z]}) =  \hat{\sigma}_z(\lambda(\omega))$. Then, $L^1(\bG)^\flat$ is   a $\ast$-algebra,  dense in $L^1(\bG)$. Moreover, $\lambda(L^1(\bG)^\flat)$ is a $\sigma$-strong-$\ast$/norm core for $\hat{\Lambda}$. Furthermore, $\mathbb{C} \rightarrow L^1(\bG): z \mapsto \omega_{[z]}$ is analytic. 
\end{lem}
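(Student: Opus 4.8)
The plan is to realise every member of $L^1(\bG)^\flat$ as a Gaussian smearing along the modular group $\hat\sigma$, and to transport all structural statements through the injective map $\lambda$, which intertwines the algebraic and modular data of $L^1(\bG)$ with those of the Tomita algebra $\cT_{\hat\varphi}$. Recall from Lemma \ref{Lem=ModularAppendix} that $(\id\otimes\hat\sigma_t)(W)=(\delta^{it}\otimes 1)(\tau_{-t}\otimes\id)(W)$, so that the functional $\omega_{[t]}$ determined by $\lambda(\omega_{[t]})=\hat\sigma_t(\lambda(\omega))$ is given on the real axis by $\omega_{[t]}(x)=\omega(\delta^{it}\tau_{-t}(x))$; as $\delta^{it}$ is unitary and $\tau_{-t}$ is isometric, $\Vert\omega_{[t]}\Vert_{L^1(\bG)}\le\Vert\omega\Vert_{L^1(\bG)}$ and $t\mapsto\omega_{[t]}$ is norm-continuous. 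I would start from a standard dense subset $D\subseteq L^1(\bG)^\natural$ (built from functionals $a\varphi b^\ast$ with $a,b\in\cT_\varphi$, for which conditions (1)--(3) are classical) and, for $\omega\in D$, set
\[
\omega(n)=\sqrt{\tfrac{n}{\pi}}\int_{-\infty}^\infty e^{-nt^2}\,\omega_{[t]}\,dt,
\]
so that $\lambda(\omega(n))=\sqrt{n/\pi}\int e^{-nt^2}\hat\sigma_t(\lambda(\omega))\,dt$ is the usual entire-smeared element of $\cT_{\hat\varphi}$, with explicit continuation $\sqrt{n/\pi}\int e^{-n(s-z)^2}\hat\sigma_s(\lambda(\omega))\,ds$.

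First I would record that $\omega(n)\in L^1(\bG)^\flat$: membership of $\lambda(\omega(n))$ in $\cT_{\hat\varphi}$ is immediate, and conditions (1)--(3) persist under the integral because $\omega\in\cI$ is equivalent to $\lambda(\omega)\in\mathrm{Dom}(\hat\Lambda)$ while $\cT_{\hat\varphi}\subseteq\mathrm{Dom}(\hat\Lambda)$ (and likewise after applying $\ast$). Density of $L^1(\bG)^\flat$ then follows from $\omega(n)\to\omega$ in $L^1(\bG)$, the standard consequence of norm-continuity of $t\mapsto\omega_{[t]}$ and $\omega_{[0]}=\omega$, together with the density of $D$. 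For the core property I would use that $\lambda(\cI)$ is already a $\sigma$-strong-$\ast$/norm core for $\hat\Lambda$: for $\omega\in\cI$ one has $\lambda(\omega(n))\to\lambda(\omega)$ $\sigma$-strong-$\ast$ and $\hat\Lambda(\lambda(\omega(n)))=\sqrt{n/\pi}\int e^{-nt^2}\hat\nabla^{it}\xi(\omega)\,dt\to\xi(\omega)=\hat\Lambda(\lambda(\omega))$ in $L^2(\bG)$, so the smeared elements approximate any element of the given core in the graph norm of $\hat\Lambda$.

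The $\ast$-algebra structure I would obtain by transport through $\lambda$, using $\lambda(\omega^\ast)=\lambda(\omega)^\ast$ for $\omega\in L^1(\bG)^\sharp$ and $\lambda(\omega\ast\theta)=\lambda(\omega)\lambda(\theta)$. Since $\cT_{\hat\varphi}$ is a $\ast$-algebra, $\lambda(\omega),\lambda(\theta)\in\cT_{\hat\varphi}$ forces $\lambda(\omega)^\ast$ and $\lambda(\omega)\lambda(\theta)$ back into $\cT_{\hat\varphi}$, and the same two identities propagate the defining conditions of $L^1(\bG)^\natural$ as well as the existence of the shifted functionals: the continuations are $\hat\sigma_z(\lambda(\omega))\hat\sigma_z(\lambda(\theta))$ for the product and, using $\hat\sigma_z(a)^\ast=\hat\sigma_{\bar z}(a^\ast)$, the clean relation $(\omega^\ast)_{[z]}=(\omega_{[\bar z]})^\ast$ for the adjoint. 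These are routine once the $\cT_{\hat\varphi}$-membership is in place.

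The hard part is the final claim, namely upgrading the evident $\sigma$-weak analyticity of $z\mapsto\hat\sigma_z(\lambda(\omega))$ to genuine norm analyticity of $z\mapsto\omega_{[z]}$ in $L^1(\bG)$. By the standard criterion that a Banach-space-valued function is norm analytic once it is locally bounded and weakly analytic against a separating family, it suffices to show (i) local boundedness of $\Vert\omega_{[z]}\Vert_{L^1(\bG)}$ on horizontal strips and (ii) analyticity of $z\mapsto\omega_{[z]}(x)$ for $x$ in a separating subset of $L^\infty(\bG)$. For (ii) I would take $x$ analytic for both $\tau$ and the $\delta$-modular structure, making $z\mapsto\delta^{iz}\tau_{-z}(x)$ an entire $L^\infty(\bG)$-valued map, so that $z\mapsto\omega(\delta^{iz}\tau_{-z}(x))$ is analytic and agrees with $\omega_{[z]}(x)$ on $\mathbb{R}$; a uniqueness-of-continuation argument identifies the two. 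The genuine obstacle is (i): the $L^1$-norm control in the imaginary direction, as opposed to the automatic $\sigma$-weak estimate and the $L^2$-bound $\Vert\xi(\omega_{[z]})\Vert_{L^2}=\Vert\hat\nabla^{-\ImP z}\xi(\omega)\Vert_{L^2}$ (finite for all $z$ since $\lambda(\omega)\in\cT_{\hat\varphi}$), neither of which dominates the $L^1$-norm. I expect the cleanest rigorous route is to prove analyticity first on the smeared elements $\omega(n)$ directly from their entire Gaussian representation $\omega(n)_{[z]}=\sqrt{n/\pi}\int e^{-n(s-z)^2}\omega_{[s]}\,ds$ (where $\Vert\omega_{[s]}\Vert\le\Vert\omega\Vert$ gives locally uniform $L^1$-convergence and differentiation under the integral), and then to obtain local boundedness for general $\omega\in L^1(\bG)^\flat$ by a Phragm\'en--Lindel\"of/three-line argument anchored at the real-axis contraction $\Vert\omega_{[t]}\Vert_{L^1}\le\Vert\omega\Vert_{L^1}$.
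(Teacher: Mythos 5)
Your proposal follows the paper's strategy almost step for step. The paper likewise works with the one-parameter group $\rho_t(\omega)(x)=\omega(\delta^{-it}\tau_{-t}(x))$ coming from Lemma \ref{Lem=ModularAppendix} (with $\lambda(\rho_t(\omega))=\hat{\sigma}_t(\lambda(\omega))$), starts from the dense $\ast$-algebra $L^1(\bG)^\natural$ (quoting \cite[Proposition 2.6]{KusVaeII} rather than building it from functionals $a\varphi b^\ast$), forms the same Gaussian regularizations $\omega(n,z)=\tfrac{n}{\sqrt{\pi}}\int e^{-n^2(t+z)^2}\rho_t(\omega)\,dt$, checks that these stay in $L^1(\bG)^\natural$ (the one computation you elide, $\rho_t(\omega)^\ast=\rho_t(\omega^\ast)$, is carried out there using $S(\delta^{it})^\ast=\delta^{it}$ and the commutation of $S$ with $\tau_t$), and then deduces density, the core property and the $\ast$-algebra structure exactly as you do.

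The one place where you genuinely diverge is the final analyticity claim, and there your extra care exposes a real subtlety but your proposed repair does not close it. The paper simply observes that $z\mapsto\theta(\lambda(\omega_{[z]}))=\theta(\hat{\sigma}_z(\lambda(\omega)))$ is entire for every $\theta\in L^1(\hat{\bG})$ --- immediate from $\lambda(\omega)\in\cT_{\hat{\varphi}}$ --- and invokes the separating-family criterion stated before the theorem in Section \ref{Sect=WeakAmenability}; no local boundedness is verified there. You are right that for a merely separating (not norming) family this criterion requires local boundedness of $\Vert\omega_{[z]}\Vert_{L^1(\bG)}$. But the Phragm\'en--Lindel\"of route you sketch is circular as stated: to run a three-line argument on $z\mapsto\omega_{[z]}(x)$ for arbitrary $x$ in the unit ball of $L^\infty(\bG)$ you need interior analyticity and an a priori growth bound, and both are available only for $x=(\id\otimes\theta)(W)$, with a bound proportional to $\Vert\theta\Vert_{L^1(\hat{\bG})}$ --- a quantity that dominates $\Vert x\Vert_{L^\infty(\bG)}$ rather than being dominated by it --- so the resulting estimate controls $\Vert\hat{\sigma}_z(\lambda(\omega))\Vert_{L^\infty(\hat{\bG})}$ and not $\Vert\omega_{[z]}\Vert_{L^1(\bG)}$. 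Your Gaussian-representation argument does give norm analyticity for the smeared elements, but the lemma asserts it for every element of $L^1(\bG)^\flat$, which is defined abstractly and is strictly larger than the set of smeared elements. So either one supplies local boundedness by some other means, or one falls back on the paper's terser (and, at this precise point, no more detailed) appeal to the separating-family criterion.
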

\begin{proof}
By \cite[Proposition 2.6]{KusVaeII} we see that $L^1(\bG)^\natural$ is a $\ast$-subalgebra of $L^1(\bG)^\sharp$, norm dense in $L^1(\bG)$. And $\lambda(L^1(\bG)^\natural)$ forms a $\sigma$-strong-$\ast$/norm core for $\hat{\Lambda}$. 

Define the norm continuous one-parameter group  $\rho$ acting on $L^1(\bG)$  by $\rho_t(\omega)(x) = \omega(\delta^{-it}\tau_{-t}(x))$, where $t \in \mathbb{R}$, see \cite[Notation 8.7]{KusVae}. For $\omega \in \cI$ we have $\rho_t(\omega) \in \cI$ and $\hat{\Lambda}(\lambda( \rho_t(\omega) )) = P^{-it} J \delta^{it} J \hat{\Lambda}(\lambda(\omega))$ for every $t \in \mathbb{R}$, c.f. \cite[Lemma 8.8]{KusVae}. By \cite[Proposition 7.12]{KusVae} we have $S(\delta^{it})^\ast = \delta^{it}$. Since $\tau_t$ and $R$ commute for every $t \in \mathbb{R}$ we have that $S = R \circ \tau_{-i/2}$ commutes with $\tau_t$. Hence, for $\omega \in L^1(\bG)^\sharp$ and $x \in D(S)$ we have that
\[
\begin{split}
& \langle S(x)^\ast, \rho_t(\omega) \rangle = \langle \delta^{-it} \tau_{-t} (S(x)^\ast), \omega \rangle = \langle \delta^{-it} S( \tau_{-t}(x) )^\ast, \omega \rangle \\ 
= &
\langle S(\delta^{-it} \tau_{-t}(x))^\ast , \omega \rangle 
 =  
\overline{ \langle \delta^{-it} \tau_{-t}(x), \omega^\ast \rangle   } = \overline{ \langle x, \rho_t(\omega^\ast) \rangle }
\end{split}
\]   
It follows that $\rho_t(\omega) \in L^1(\bG)^\sharp$ with $\rho_t(\omega)^\ast = \rho_t(\omega^\ast)$.

Now, let $\omega \in L^1(\bG)^\natural$. For each $t \in \mathbb{R}$, we have that $\rho_t(\omega) \in \cI$ and $\rho_t(\omega) \in L^1(\bG)^\sharp$. And also, $\rho_t(\omega)^\ast = \rho_t(\omega^\ast) \in \cI$ as $\omega^\ast \in \cI$. Thus $\rho_t(\omega) \in L^1(\bG)^\natural$.

We now use a standard smearing argument, which we take from \cite[Lemma 2.5]{KusVaeII}. Set,
\[
\omega(n, z) = \frac{n}{\sqrt{\pi}} \int_{-\infty}^\infty e^{-n^2 (t+z)^2} \rho_t(\omega) dt. 
\] 
From the closedness of the mapping $\omega \mapsto \hat{\Lambda}(\lambda(\omega))$ it follows that $\omega(n, z) \in \cI$ with,
\[
\hat{\Lambda}( \lambda( \omega(n, z) )) = \frac{n}{\sqrt{\pi}} \int_{-\infty}^\infty e^{-n^2 (t+z)^2} P^{-it} J \delta^{it} J \hat{\Lambda}( \lambda(\omega) ) dt. 
\]
From the previous paragraph, we see that $\omega(n, z) \in L^1(\bG)^\sharp$ with $\omega(n,z)^\ast = (\omega^\ast)(n, \overline{z})$. Hence, $\omega(n, z) \in L^1(\bG)^\natural$. Moreover, $\omega(n, 0)$ is analytic for $\rho$ and $\rho_z( \omega(n, 0) ) = \omega(n, -z)$.

Now, \cite[Proposition 8.9]{KusVae} shows that $\lambda(\rho_t(\omega)) = \hat{\sigma}_t( \lambda(\omega))$. Hence, from the smearing techniques, we find that $\lambda(\omega(n, 0))$ is analytic with respect to $\hat{\sigma}$. Also, $\omega(n, z) \in \cI$ and $\omega(n,z)^\ast \in \cI$ for all $z$, so that $\lambda(\omega(n, z))$ is in $\cT_{\hat{\varphi}}$. Since $\omega(n, 0) \rightarrow \omega$ in norm, we see that $L^1(\bG)^\flat$ is dense in $L^1(\bG)$. Moreover, we find that $\lambda(\omega(n, 0)) \rightarrow \lambda(\omega)$ in norm (and hence in the $\sigma$-strong-$\ast$ topology), and $\hat{\Lambda}(\lambda(\omega(n, 0) )) \rightarrow \hat{\Lambda}(\lambda(\omega))$ in norm. So indeed, $\lambda(L^1(\bG)^\flat)$ is a $\sigma$-strong-$\ast$/norm core for $\hat{\Lambda}$. That $L^1(\bG)^\flat$ is a $\ast$-algebra follows from \cite[Result 8.6]{KusVae} and the relation $(\omega \ast \theta)^\ast = \theta^\ast \ast \omega^\ast$.    

It remains to prove that {\it for every} $\omega \in L^1(\bG)^\flat$, the map $z \mapsto \omega_{[z]}$ is analytic. But for any $\theta \in L^1(\hat{\bG})$, the map $z \mapsto  \theta(\lambda(\omega_{[z]} )) = \theta(\hat{\sigma}_z(\lambda(\omega )))$ is analytic, which proves the claim since $\theta \circ \lambda$ with $\theta \in L^1(\hat{\bG})$ forms a set of separating functionals on $L^1(\bG)$.

\end{proof}

\begin{lem}[Remark 8.31 of \cite{KusVae}]\label{Lem=L2Thingy}
Let $\theta \in L^1(\bG)$ be such that $\lambda(\theta) \in L^2(\hat{\bG}) \cap L^\infty(\hat{\bG})$. Then, $\theta \in \cI$. 
\end{lem}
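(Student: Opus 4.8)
The plan is to show that the vector $\hat{\Lambda}(\lambda(\theta))$, which exists by the hypothesis $\lambda(\theta) \in L^2(\hat{\bG}) \cap L^\infty(\hat{\bG}) = {\rm Dom}(\hat{\Lambda})$, is exactly the Riesz representative witnessing $\theta \in \cI$. In other words, I would prove the identity
\[
\theta(x^\ast) = \langle \hat{\Lambda}(\lambda(\theta)), \Lambda(x) \rangle, \qquad x \in L^2(\bG) \cap L^\infty(\bG).
\]
Once this holds, the map $\Lambda(x) \mapsto \theta(x^\ast)$ is bounded (with norm $\leq \Vert \hat{\Lambda}(\lambda(\theta)) \Vert$), which is precisely the definition of $\theta \in \cI$. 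First I would reduce this identity to the case where $x$ ranges over a $\sigma$-strong-$\ast$/norm core $C$ for $\Lambda$: if it holds on $C$, then for arbitrary $x \in L^2(\bG) \cap L^\infty(\bG)$ one picks $x_j \in C$ with $x_j \to x$ $\sigma$-strong-$\ast$ and $\Lambda(x_j) \to \Lambda(x)$ in norm; normality of $\theta$ (applied to $x_j^\ast \to x^\ast$ $\sigma$-weakly) together with continuity of the inner product propagates the identity from $C$ to the whole domain.

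The crucial choice is to take $C$ to consist of left slices of $W^\ast$. Concretely I would set $C = \hat{\lambda}(\cI(\hat{\bG}))$, where $\cI(\hat{\bG})$ denotes the analogue of $\cI$ for the dual group. By the Kustermans--Vaes biduality $\hat{\hat{\bG}} = \bG$ (so that the dual weight of $\hat{\varphi}$ is $\varphi$ and $\hat{\lambda}$ plays for $\hat{\bG}$ the role that $\lambda$ plays for $\bG$), the defining core property of the dual weight shows that $C$ is a core for $\Lambda$. Moreover, since $L^\infty(\hat{\bG})$ acts in standard form on $L^2(\bG)$, every $\mu \in L^1(\hat{\bG})$ is of the form $\omega_{\zeta, \eta}$ for suitable $\zeta, \eta \in L^2(\bG)$, so each element of $C$ can be written as $x = (\id \otimes \omega_{\zeta, \eta})(W^\ast)$. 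Then $x^\ast = (\id \otimes \omega_{\eta, \zeta})(W)$, and for \emph{every} $\omega \in L^1(\bG)$ one computes
\[
\omega(x^\ast) = (\omega \otimes \omega_{\eta, \zeta})(W) = \langle \lambda(\omega) \eta, \zeta \rangle.
\]
The point of this computation is that $\omega(x^\ast)$ depends on $\omega$ only through $\lambda(\omega)$, and does so $\sigma$-weakly continuously.

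Finally I would invoke the core property of $\lambda(\cI)$ for $\hat{\Lambda}$ built into the construction of $\hat{\varphi}$: choose a net $\omega_i \in \cI$ with $\lambda(\omega_i) \to \lambda(\theta)$ $\sigma$-strong-$\ast$ and $\hat{\Lambda}(\lambda(\omega_i)) = \xi(\omega_i) \to \hat{\Lambda}(\lambda(\theta))$ in norm. For $x \in C$ as above, combining the defining relation of $\xi(\omega_i)$, norm convergence of $\hat{\Lambda}(\lambda(\omega_i))$, the slice formula, and the $\sigma$-weak convergence $\lambda(\omega_i) \to \lambda(\theta)$ yields
\[
\langle \hat{\Lambda}(\lambda(\theta)), \Lambda(x) \rangle = \lim_i \langle \xi(\omega_i), \Lambda(x) \rangle = \lim_i \omega_i(x^\ast) = \lim_i \langle \lambda(\omega_i) \eta, \zeta \rangle = \langle \lambda(\theta) \eta, \zeta \rangle = \theta(x^\ast),
\]
which is the desired identity on $C$. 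The main obstacle is the structural input that $C = \hat{\lambda}(\cI(\hat{\bG}))$ is simultaneously a core for $\Lambda$ \emph{and} consists of genuine slices $(\id \otimes \omega_{\zeta, \eta})(W^\ast)$: this is the dual of the core property used for $\hat{\Lambda}$, and it rests on the biduality $\hat{\hat{\bG}} = \bG$ together with the fact that $L^\infty(\hat{\bG})$ is in standard form on $L^2(\bG)$, which is what lets every core vector be realized as a slice to which the formula of the previous paragraph applies. (This statement is, as the reference indicates, essentially Remark 8.31 of \cite{KusVae}.)
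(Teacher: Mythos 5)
The paper does not actually prove this lemma: it is stated as a direct citation of \cite[Remark 8.31]{KusVae}, so there is no in-paper argument to compare against. Your reconstruction is correct and self-contained modulo the standard structural inputs. The identity $\theta(x^\ast) = \langle \hat{\Lambda}(\lambda(\theta)), \Lambda(x)\rangle$ is indeed the right thing to prove, and each step checks out: the slice computation $\omega(x^\ast) = \langle \lambda(\omega)\eta,\zeta\rangle$ for $x = (\id\otimes\omega_{\zeta,\eta})(W^\ast)$ is correct (including the adjoint formula $x^\ast = (\id\otimes\omega_{\eta,\zeta})(W)$), the passage through a net $\omega_i\in\cI$ with $\lambda(\omega_i)\to\lambda(\theta)$ $\sigma$-strong-$\ast$ and $\xi(\omega_i)\to\hat{\Lambda}(\lambda(\theta))$ in norm uses exactly the core property built into the definition of $\hat{\varphi}$, and the extension from the core $C$ to all of $L^2(\bG)\cap L^\infty(\bG)$ via normality of $\theta$ is sound. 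The only heavyweight input is the claim that $C = \hat{\lambda}(\cI(\hat{\bG}))$ is a $\sigma$-strong-$\ast$/norm core for $\Lambda$; this rests on the Kustermans--Vaes biduality theorem ($\hat{\hat{\varphi}} = \varphi$ with coinciding GNS maps on $L^2(\bG)$), which is proved in the same Section 8 of \cite{KusVae} that the lemma is cited from, so you are not smuggling in anything circular or unavailable. In short: a correct proof of a statement the paper merely imports.
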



\section{}\label{Sect=AppB}

Here, we prove the necessary results on convergences of basic hypergeometric series. 

\begin{lem}\label{Lem=LambdaFraction}
For every $k \in \mathbb{N} \cup \{ \infty\}$, we have the limit,
\[
\frac{(q/\lambda, q/\lambda; q^2)_k }{(1/\lambda^2; q^2)_k} \rightarrow 0,
\]
as $\lambda \rightarrow q$. 
\end{lem}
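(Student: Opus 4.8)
The plan is to locate precisely which factors in the numerator and denominator vanish as $\lambda \to q$ and to compare their orders of vanishing. By the product convention, $(q/\lambda, q/\lambda; q^2)_k = (q/\lambda; q^2)_k^2$ with $(q/\lambda; q^2)_k = \prod_{l=0}^{k-1}(1 - q^{2l+1}/\lambda)$, so the only factor of the numerator vanishing at $\lambda = q$ is the $l=0$ factor $1 - q/\lambda$; since the numerator is a square, this gives a zero of order two. In the denominator $(1/\lambda^2; q^2)_k = \prod_{l=0}^{k-1}(1 - q^{2l}/\lambda^2)$ the only factor vanishing at $\lambda = q$ is the $l=1$ factor $1 - q^2/\lambda^2 = (1 - q/\lambda)(1 + q/\lambda)$, contributing a zero of order one. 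So I expect a net simple zero to survive.

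First I would factor these out explicitly, writing
\[
\frac{(q/\lambda; q^2)_k^2}{(1/\lambda^2; q^2)_k}
= (1 - q/\lambda)\cdot \frac{\bigl(\prod_{l=1}^{k-1}(1 - q^{2l+1}/\lambda)\bigr)^2}{(1 + q/\lambda)\prod_{\substack{l=0\\ l\neq 1}}^{k-1}(1 - q^{2l}/\lambda^2)},
\]
so that the vanishing factor $(1 - q/\lambda)\to 0$ is displayed. I would then verify that the remaining fraction has a finite limit as $\lambda\to q$. Evaluating at $\lambda = q$, its numerator becomes $\bigl(\prod_{l\ge 1}(1 - q^{2l})\bigr)^2$ and its denominator becomes $2(1 - q^{-2})\prod_{l\ge 2}(1 - q^{2(l-1)})$; the latter is nonzero because for $l \neq 1$ one has $1 - q^{2l}/q^2 = 1 - q^{2(l-1)} \neq 0$ for every $0 < q < 1$. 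Hence the expression is a product of $(1 - q/\lambda)\to 0$ with a factor tending to a finite constant, giving limit $0$.

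The only point requiring genuine care is the case $k = \infty$, where the products are infinite. Here I would record that, for fixed $0 < q < 1$, each product $(q/\lambda; q^2)_\infty$ and $(1/\lambda^2; q^2)_\infty$ converges normally, and hence continuously, in $\lambda$ on a neighbourhood of $q$ bounded away from $0$; this justifies the factorisation above and guarantees the finiteness and non-vanishing of the limit of the regular part (its value at $\lambda = q$ involves the convergent product $(q^2; q^2)_\infty \neq 0$). I expect this convergence bookkeeping for the infinite product, together with checking that no further factor accidentally vanishes at $\lambda = q$, to be the only subtlety; for finite $k$ the claim reduces to a direct rational-function computation.
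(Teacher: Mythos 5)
Your proof is correct and follows essentially the same route as the paper: both isolate the double zero $(1-q/\lambda)^2$ in the numerator against the simple zero $1-q^2/\lambda^2=(1-q/\lambda)(1+q/\lambda)$ in the denominator, cancel one factor, and check that the remaining (possibly infinite) product has a finite nonzero limit at $\lambda=q$. Your explicit treatment of normal convergence for $k=\infty$ is a welcome extra precision, and your indexing (powers $q^{2l}$ in the base-$q^2$ factorials) is the correct one.
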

\begin{proof}
For $k=1$ and $k=2$ this is trivial. For $k \geq 3$ we need to be more careful, because the $q$-factorial in the denominator becomes 0 as $\lambda \rightarrow q$. However, we have,
\[
\begin{split}
& \frac{(q/\lambda, q/\lambda; q^2)_k }{(1/\lambda^2; q^2)_k}  = \prod_{i=0}^{k-1} \frac{(1- \frac{q}{\lambda} q^i)^2}{ 1 - \frac{1}{\lambda^2} q^i}, \\
  = & \frac{(1-q/\lambda)^2}{(1- q^2/\lambda^2)} \frac{(1-q^2/\lambda)^2}{(1-  1/\lambda^2)} \frac{(1-q^3/\lambda)^2}{(1- q/\lambda^2)} \prod_{i=3}^{k-1} \frac{(1- \frac{q}{\lambda} q^i)^2}{ 1 - \frac{1}{\lambda^2} q^i}\\
  = & \frac{ 1-q/\lambda }{1 + q^2/\lambda^2} \frac{(1-q^2/\lambda)^2}{(1-  1/\lambda^2)} \frac{(1-q^3/\lambda)^2}{(1- q/\lambda^2)} \prod_{i=3}^{k-1} \frac{(1- \frac{q}{\lambda} q^i)^2}{ 1 - \frac{1}{\lambda^2} q^i}.
\end{split}
\]
As $\lambda \rightarrow q$, this term goes to zero. 
\end{proof}

\begin{prop}\label{Prop=TwoPhiOneApproximation}
Let $a_z: I_q \rightarrow \mathbb{C}$ be the function defined in \eqref{Eqn=AzFunction}. Let $\cG_\alpha$ be the domain of Proposition \ref{Prop=Analytic}. Then $a_z(p_0) \rightarrow 1$ pointwise  as $z \rightarrow 1$ (limit in $\cG_\alpha$). Moreover, this convergence is uniform  on $I_q \cap [1, \infty)$.  
\end{prop}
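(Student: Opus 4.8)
The plan is to make $a_z$ completely explicit as a $_2\varphi_1$-series and then to split the analysis according to whether the argument $-q^2/\kappa(p_0)$ of that series lies in the open unit disc. First I would compute $a_z(p_0)$ in closed form: substituting the vector \eqref{Eqn=State} into \eqref{Eqn=AzFunction} and using \eqref{Eqn-CCoefficient}, the eigenvalue $a_z(p_0)$ is a sum of $C$-coefficients in which, for $p_0\in q^{\mathbb{Z}}$, only the two diagonal terms survive (the cross terms being killed by the factor $\delta_{\sgn(p_0),\eta\eta'}$). Running the computation \eqref{Eqn=CoamenabilityCompI}--\eqref{Eqn=CoamenabilityCompII} with $m=0$, where the rational prefactor collapses to the empty product $(-q^2/p_1^2;q^2)_0=1$, this gives, with $\lambda=q^z$,
\[
a_z(p_0)= {}_2\varphi_1\!\left(\begin{array}{c} q/\lambda,\ \lambda q \\ q^2 \end{array};q^2,\,-q^2/\kappa(p_0)\right),
\]
read as a convergent series when $|q^2/\kappa(p_0)|<1$ and as its analytic continuation otherwise; this is the formula invoked in the proof of Proposition \ref{Prop=Analytic}. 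The decisive feature is that at $\lambda=q$ the first numerator parameter equals $q/\lambda=1$, so $(q/\lambda;q^2)_k=(1;q^2)_k=0$ for all $k\geq1$ and the series collapses to its constant term $1$.

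Next I would dispose of the region $|q^2/\kappa(p_0)|<1$, which in particular contains all of $I_q\cap[1,\infty)$ (there $\kappa(p_0)=p_0^2\geq1$, so the argument is bounded by $q^2$ uniformly). Writing $a_z(p_0)-1=\sum_{k\geq1}\frac{(q/\lambda;q^2)_k(\lambda q;q^2)_k}{(q^2;q^2)_k^2}\left(-q^2/\kappa(p_0)\right)^k$, every summand carries the common factor $(1-q/\lambda)\to0$, while the remaining $q$-factorials stay bounded and the powers are dominated, uniformly in $p_0$, by the convergent geometric series $\sum_k q^{2k}$. A dominated estimate then yields $\sup_{p_0\in I_q\cap[1,\infty)}|a_z(p_0)-1|\leq C\,|1-q/\lambda|\to0$ as $z\to1$, which is the uniform assertion; the same estimate gives pointwise convergence whenever $|q^2/\kappa(p_0)|<1$.

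For the remaining pointwise values, where $|q^2/\kappa(p_0)|\geq1$, I would invoke the two-term analytic continuation of $_2\varphi_1$ (Gasper--Rahman \cite{GasRah}), which expresses $a_z(p_0)$ as a sum of two terms, each a product of infinite $q$-shifted factorials times a convergent $_2\varphi_1$-series in the variable $-\kappa(p_0)$ (of modulus $<1$). As $\lambda\to q$ the prefactor of the first summand tends to $1$ and its inner series again collapses to $1$ by the mechanism above, whereas the prefactor of the second summand contains exactly the quotient $\dfrac{(q/\lambda,q/\lambda;q^2)_\infty}{(1/\lambda^2;q^2)_\infty}$, which tends to $0$ by Lemma \ref{Lem=LambdaFraction} (the case $k=\infty$), the other factors remaining bounded. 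Hence $a_z(p_0)\to1$, at least as long as $-q^2/\kappa(p_0)$ avoids the pole set of the continuation formula.

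The hard part will be the legitimacy of this last step at the arguments for which the formula degenerates. Its individual summands carry a factor $(-q^2/\kappa(p_0);q^2)_\infty^{-1}$ that is singular precisely when $-q^2/\kappa(p_0)\in\{q^{-2j}:j\geq0\}$, which occurs for every $p_0\in-q^{\mathbb{N}}$ (where, moreover, it is the off-diagonal $C$-coefficients that contribute). At such $p_0$ the two summands blow up while their sum, the honest bounded matrix coefficient $a_z(p_0)$, stays finite. I would resolve this by a residue argument: the residues of the two summands at the offending pole are themselves proportional to the same vanishing factor $(q/\lambda;q^2)_\infty$, so the apparent singularity is removable and the limit $\lambda\to q$ still equals $1$; alternatively one evaluates $a_z(p_0)$ at these $p_0$ directly from the regular closed form of the $C$-function in \cite[Lemma 9.4]{GroKoeKus}, which is continuous across these exceptional arguments. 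Reconciling the continuation formula with the genuine matrix coefficient at $p_0\in-q^{\mathbb{N}}$ is the delicate point of the argument.
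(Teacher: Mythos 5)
Your overall strategy --- reduce $a_z(p_0)$ to an explicit ${}_2\varphi_1$, treat the arguments with $|q^2/\kappa(p_0)|<1$ by the termination of the series at $\lambda=q$ together with a dominated estimate (this covers $I_q\cap[1,\infty)$ and yields the uniformity), and treat the remaining arguments via the transformation formula [Eqn.\ (III.32)] of \cite{GasRah} with Lemma \ref{Lem=LambdaFraction} killing the second summand --- is the same as the paper's, and your handling of $p_0\in q^{-\mathbb{N}\cup\{0\}}$ and $p_0\in q^{\mathbb{N}}$ is essentially correct. One small point in the latter case: in the first summand the lower parameter $q^2/\lambda^2$ also tends to $1$, so the coefficients of the inner series are of the form $0/0$; the series does not simply ``collapse by the mechanism above'' but because the double zero $(q/\lambda,q/\lambda;q^2)_k$ beats the simple zero $(q^2/\lambda^2;q^2)_k$ --- the paper inserts a separate computation for exactly this quotient, parallel to Lemma \ref{Lem=LambdaFraction}.

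The genuine gap is the case $p_0\in-q^{\mathbb{N}}$, which you correctly flag as the delicate point but do not prove. Two things go wrong with the plan as stated. First, the closed form $a_z(p_0)={}_2\varphi_1\bigl(q/\lambda,\lambda q;q^2;q^2,-q^2/\kappa(p_0)\bigr)$ is only derived for positive $p_0$, where the diagonal terms survive and the prefactor collapses to $1$ as in \eqref{Eqn=CoamenabilityCompII}; for negative $p_0$ the surviving off-diagonal $C$-coefficients produce, via \cite[Lemma 9.4 and Appendix B.6]{GroKoeKus}, an overall sign $(-1)^{\frac{1}{2}(1-\sgn(p_0))}=-1$ together with an $S$-function carrying the extra $\lambda$-dependent theta quotient $(-\lambda q^3/p_0^2,-p_0^2/q\lambda;q^2)_\infty/(p_0^2/q\lambda,\lambda q^3/p_0^2;q^2)_\infty$, so the object to be analyzed is not the bare analytic continuation of the series. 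Second, at these $p_0$ the argument $-q^2/\kappa(p_0)=q^2/p_0^2$ lands exactly on the singular set of (III.32), so the ``$0\times\infty$'' cancellation between the vanishing factor $(-q^2/\kappa(p_0);q^2)_\infty$ supplied by the $S$-function and the poles of the two summands must be carried out explicitly; the paper does this through several applications of the $\theta$-product identity \eqref{EqnIntThePro}, and only after all factors (including $p_0^2\nu(p_0)^2c_q^2$ and the extra theta quotient) are combined does the limit come out to be exactly $-1$, whence $a_z(p_0)\to1$. Your proposed residue argument is plausible in spirit but is not a proof: the assertion that the residues are proportional to $(q/\lambda;q^2)_\infty$ would itself require this computation, and nothing in the sketch pins down that the finite limit equals $1$ rather than some other $p_0$-dependent constant.
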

\begin{proof}
Firstly, we record the following expression using \cite[Lemma 9.4]{GroKoeKus}. For every $p_0 \in I_q$, and $x = \mu(q^z) = \mu(\lambda)$ with $z \in \cG_\alpha$, we have,
\[
\begin{split}
a_z(p_0) = & \frac{1}{2} C(-x; 0, +, -; p_0,p_0, 0) +  \frac{1}{2} C(-x; 0, -, +; p_0,p_0, 0) \\
= & (-1)^{\frac{1}{2}(1-\sgn(p_0) )  } S(\lambda; p_0, p_0, 0) \\ 
& \: \times \: \left(\frac{A(-\lambda; 1, 0, \sgn(p_0), -\sgn(p_0)) }{A(-\lambda; 1, 0, +, -) } + \frac{A(-\lambda; 1, 0,-\sgn(p_0), \sgn(p_0)) }{A(-\lambda; 1, 0, -, +) } \right). 
\end{split}
\]
The fractions of the $A$-functions can be computed from \cite[Appendix B.6]{GroKoeKus} and one finds directly that,
\[
a_z(p_0) =  (-1)^{\frac{1}{2}(1-\sgn(p_0) )  } S(\lambda; p_0, p_0, 0).
\]
Next, we separate three cases. We prove that $a_z \rightarrow 1$  on each of the domains  $q^{-\mathbb{N} \cup \{0 \}}$, $q^{\mathbb{N}}$, and $ - q^{\mathbb{N}}$ and that the convergence is uniform on the first one. Reason for the separation of cases is that we need to consider analytic extensions of basic hypergeometric $_2 \varphi _1$-series using the transformation formula \cite[Eqn. (III.32)]{GasRah} on different domains. 

\vspace{0.3cm}

\noindent {\bf Case 1:} The domain $p_0 \in q^{-\mathbb{N} \cup \{0 \}}$. In this case, using exactly the same computations as in \eqref{Eqn=CoamenabilityCompI} and \eqref{Eqn=CoamenabilityCompII} for $m= 0$, we see that, 
\[
\begin{split}
a_z(p_0) = &  \: _2 \! \varphi_1\left( \begin{array}{c} q/\lambda,  \lambda q \\ q^2 \end{array};q^2, -q^2/\kappa(p_0) \right).
\end{split}
\]
In case $z \rightarrow 1$ we see that $\lambda \rightarrow q$ and $a_z(p_0) \rightarrow 1$ uniformly for  $p_0 \in  q^{-\mathbb{N} \cup \{0 \}}$ (that the convergence is uniform follows directly from the defining power series of the basic hypergeometric series, which terminates for $\lambda = q$ as a constant function).

\vspace{0.3cm}

\noindent {\bf Case 2:}  The domain $p_0 \in q^{\mathbb{N}}$. In this case, we find using the expression \cite[Lemma 9.1]{GroKoeKus}, the $\theta$-product formula and the transformation formula \cite[Eqn. (III.32)]{GasRah},
\[
\begin{split}
& a_z(p_0)   \\
= 
& S(\lambda, p_0, p_0, 0) \\
= & p_0^2 \nu(p_0)^2 c_q^2 (-p_0^2; q^2)_\infty (q^2, -q^2/p_0^2; q^2)_\infty (q^2; q^2)_\infty
 \: _2 \! \varphi_1\left( \begin{array}{c} q/\lambda,  \lambda q \\ q^2 \end{array};q^2, -q^2/\kappa(p_0) \right) \\
= &   \: _2 \! \varphi_1\left( \begin{array}{c} q/\lambda,  \lambda q \\ q^2 \end{array};q^2, -q^2/\kappa(p_0) \right)\\
= &  
\frac{(q \lambda, q\lambda, -q^3/\lambda\kappa(p_0), - \lambda \kappa(p_0)/q; q^2 )_\infty  }{(q^2, \lambda^2, -q^2/\kappa(p_0), -\kappa(p_0); q^2 )_\infty  } \: _2 \! \varphi_1\left( \begin{array}{c} q/\lambda,    q/\lambda \\ q^2/\lambda^2 \end{array};q^2, -\kappa(p_0) \right) \\
& \: + \: 
\frac{(q /\lambda, q/\lambda, -q^3 \lambda / \kappa(p_0), - \kappa(p_0)/q \lambda; q^2 )_\infty  }{(q^2, 1/ \lambda^2, -q^2/\kappa(p_0), -\kappa(p_0); q^2 )_\infty  } \: _2 \! \varphi_1\left( \begin{array}{c} q \lambda,    q \lambda \\ q^2 \lambda^2 \end{array};q^2, -\kappa(p_0) \right)\\
\end{split}
\]
By Lemma \ref{Lem=LambdaFraction} we see that the second summand goes to 0 if $\lambda \rightarrow q$.    Hence, we find formally that,
\begin{equation}\label{Eqn=BasicHypComp}
\begin{split}
&\lim_{z \rightarrow 1} a_z(p_0)   \\
= & \lim_{\lambda \rightarrow q}  
\frac{(q \lambda, q\lambda, -q^3/\lambda\kappa(p_0), - \lambda \kappa(p_0)/q; q^2 )_\infty  }{(q^2, \lambda^2, -q^2/\kappa(p_0), -\kappa(p_0); q^2 )_\infty  } \\
& \: \times \: \: _2 \! \varphi_1\left( \begin{array}{c} q/\lambda,    q/\lambda \\ q^2/\lambda^2 \end{array};q^2, -\kappa(p_0) \right),  \\  
= & \lim_{\lambda \rightarrow q}   \: _2 \! \varphi_1\left( \begin{array}{c} q/\lambda,    q/\lambda \\ q^2/\lambda^2 \end{array};q^2, -\kappa(p_0) \right),  \\ 
\end{split}
\end{equation}
however we have to justify the existence of the (latter) limit. In order to do this, we have for $k \in \mathbb{N}$,
\[
\frac{(q/\lambda, q/\lambda ; q^2)_k }{ (q^2/\lambda^2; q^2)_k } = \frac{ (1-\frac{q}{\lambda}  )^2 }{1- \frac{q^2}{\lambda^2}   }  \prod_{i=1}^{k-1} \frac{ (1-\frac{q}{\lambda} q^i)^2 }{1- \frac{q^2}{\lambda^2} q^i } =  
 \frac{  1-\frac{q}{\lambda}   }{1 + \frac{q}{\lambda}   }  \prod_{i=1}^{k-1} \frac{ (1-\frac{q}{\lambda} q^i)^2 }{1- \frac{q^2}{\lambda^2} q^i }. 
\]
As $\lambda \rightarrow q$ this expression goes to zero. This means that the coefficients of the basic hypergeometric series in  \eqref{Eqn=BasicHypComp} go to 0 as $\lambda \rightarrow q$, resulting in the constant function one. Thus $\lim_{z \rightarrow 1} a_z(p_0) = 1$ pointwise for $p_0 \in q^{\mathbb{N}}$.

\vspace{0.3cm}

\noindent {\bf Case 3:}  The domain $p_0 \in -q^{\mathbb{N}}$. We find that using \cite[Lemma 9.1]{GroKoeKus} and the transformation formula \cite[Eqn. (III.32)]{GasRah} that,
\begin{equation}\label{Eqn=BasicSeriesCompII}
\begin{split}
& - a_z(p_0) \\
= & p_0^2 \nu(p_0)^2 c_q^2 (- \kappa(p_0) ; q^2)_\infty 
\frac{( q^2, -q^2/\kappa(p_0), -\lambda q^3/p_0^2, - p_0^2/q\lambda; q^2 )_\infty}{( p_0^2/q\lambda, \lambda q^3/p_0^2; q^2 )_\infty } (q^2; q^2)_\infty\\
  & \: \times \: \: _2 \! \varphi_1\left( \begin{array}{c} q/\lambda,    q \lambda \\ q^2  \end{array};q^2, -q^2/\kappa(p_0) \right) \\
= &  p_0^2 \nu(p_0)^2 c_q^2 (- \kappa(p_0) ; q^2)_\infty  (q^2; q^2)_\infty^2 \frac{(q^2/p_0^2, -\lambda q^3/p_0^2, -p_0^2/q\lambda; q^2 )_\infty }{ (p_0^2/q\lambda, \lambda q^3/p_0^2; q^2)_\infty  } \\
  & \: \times \:  \left( 
\frac{( \lambda q, \lambda q, -q^3/\lambda \kappa(p_0), -\lambda \kappa(p_0) /q; q^2 )_\infty}{( q^2,  \lambda^2, -q^2/ \kappa(p_0), -  \kappa(p_0) ; q^2 )_\infty } \: _2 \! \varphi_1\left( \begin{array}{c} q/\lambda,    q /\lambda \\ q^2/\lambda^2  \end{array};q^2, -\kappa(p_0) \right)\right. \\
& \: + \: \left. 
\frac{( q/\lambda , q/\lambda , -q^3\lambda/ \kappa(p_0), -\kappa(p_0) / \lambda q; q^2 )_\infty}{( q^2,  1/\lambda^2, -q^2/ \kappa(p_0), -  \kappa(p_0) ; q^2 )_\infty } \: _2 \! \varphi_1\left( \begin{array}{c} q \lambda,    q  \lambda \\ q^2 \lambda^2  \end{array};q^2, -\kappa(p_0) \right)
\right) 
\end{split}
\end{equation}
As in {\bf Case 2} it follows from Lemma \ref{Lem=LambdaFraction} that the second summand within the large brackets of \eqref{Eqn=BasicSeriesCompII} tends to zero as $\lambda \rightarrow q$.  Therefore, using that $p_0 := -q^k$ is negative, and the $\theta$-product identity various times,
\[
\begin{split}
& \lim_{z \rightarrow 1} - a_z(p_0) \\
= & \lim_{\lambda \rightarrow q}  p_0^2 \nu(p_0)^2 c_q^2 (- \kappa(p_0) ; q^2)_\infty  (q^2; q^2)_\infty^2 \frac{(q^2/p_0^2, -\lambda q^3/p_0^2, -p_0^2/q\lambda; q^2 )_\infty }{ (p_0^2/q\lambda, \lambda q^3/p_0^2; q^2)_\infty  } \\
  & \: \times \:  
\frac{( \lambda q, \lambda q, -q^3/\lambda \kappa(p_0), -\lambda \kappa(p_0) /q; q^2 )_\infty}{( q^2,  \lambda^2, -q^2/ \kappa(p_0), -  \kappa(p_0) ; q^2 )_\infty } \: _2 \! \varphi_1\left( \begin{array}{c} q/\lambda,    q /\lambda \\ q^2/\lambda^2  \end{array};q^2, -\kappa(p_0) \right)  \\
= & \lim_{\lambda \rightarrow q}  p_0^2 \nu(p_0)^2 c_q^2 (p_0^2 ; q^2)_\infty  (q^2; q^2)_\infty^2 (-q^4/p_0^2, -p_0^2/q^2; q^2)_\infty   \frac{(\lambda q, \lambda q;q^2)_\infty  }{ (q^2, \lambda^2;q^2)_\infty } \\
  & \: \times \: \frac{(q^3/\lambda p_0^2, \lambda p_0^2/q; q^2)_\infty }{(q^3\lambda/ p_0^2,  p_0^2/q \lambda ; q^2)_\infty }  \frac{1}{(p_0^2; q^2)_\infty} \: _2 \! \varphi_1\left( \begin{array}{c} q/\lambda,    q /\lambda \\ q^2/\lambda^2  \end{array};q^2, -\kappa(p_0) \right)  \\
= & \lim_{\lambda \rightarrow q}  p_0^2 \nu(p_0)^2 c_q^2 (p_0^2 ; q^2)_\infty  (q^2; q^2)_\infty (-q^4/p_0^2, -p_0^2/q^2; q^2)_\infty \frac{(q^2, q^2; q^2)_\infty}{ (q^2;q^2)_\infty}  \\
& \: \times \:  \frac{1-\frac{q^2}{p_0^2}}{1-\frac{p_0^2}{q^2}} \frac{1}{(p_0^2; q^2)_\infty} \: _2 \! \varphi_1\left( \begin{array}{c} q/\lambda,    q /\lambda \\ q^2/\lambda^2  \end{array};q^2, -\kappa(p_0) \right)\\
= &   p_0^2 \nu(p_0)^2 c_q^2   (q^2; q^2)_\infty (-q^4/p_0^2, -p_0^2/q^2; q^2)_\infty \frac{(q^2, q^2; q^2)_\infty}{ (q^2;q^2)_\infty} \frac{q^2}{p_0^2} \frac{p_0^2 - q^2}{q^2 - p_0^2} \\
= & - q^2  q^{(k-1)(k-2)} c_q^2 (q^2; q^2)_\infty^2 (-1, -q^2; q^2)_\infty q^{-(k-1)(k-2)}  \\
= & -c_q^2 (q^2; q^2)_\infty^2 (-1, -q^2; q^2)_\infty q^2\\
= & -1. 
\end{split}
\]

\end{proof}

{\bf Acknowledgements.} The author thanks Wolter Groenevelt for useful correspondence on the corepresentation spectrum of extended quantum $SU(1,1)$. The author  thanks Erik Koelink for several indispensable discussions. The author also thanks the IMAPP institute at the Radboud Universiteit Nijmegen for their hospitality; the final part of this paper was completed there. We thank Yuki Arano for pointing out Remark \ref{Rmk=Reflexion}. We thank the anonymous referee for several improvements of the manuscript.

\end{document}